\documentclass{article}
\usepackage{color}
\usepackage{amsfonts}
\usepackage{tikz}
\usepackage{amssymb}
\usepackage{amsmath}
\usepackage{circuitikz}
\usepackage{amsthm}
\usepackage[mathscr]{euscript}
\usepackage{hyperref}
\usepackage{geometry}
\usepackage{comment}
\usepackage{authblk}
\usepackage[T1]{fontenc}
\usepackage[utf8]{inputenc}

\geometry{a4paper,scale=0.8}
\newtheorem{theorem}{Theorem}[section]
\newtheorem{corollary}{Corollary}[section]

\newtheorem{lemma}{Lemma}[section]
\newtheorem{proposition}{Proposition}[section]
\newtheorem{remark}{Remark}[section]

\numberwithin{equation}{section}

\newcommand{\m}{\mathbb}
\newcommand{\ml}{\mathcal}
\newcommand{\p}{\partial}
\newcommand{\pprec}{\prec\!\!\!\prec}

\makeatletter
\newcommand\RSloop{\@ifnextchar\bgroup\RSloopa\RSloopb}
\makeatother
\newcommand\RSloopa[1]{\bgroup\RSloop#1\relax\egroup\RSloop}
\newcommand\RSloopb[1]%
{\ifx\relax#1%
	\else
	\ifcsname RS:#1\endcsname
	\csname RS:#1\endcsname
	\else
	\GenericError{(RS)}{RS Error: operator #1 undefined}{}{}%
	\fi
	\expandafter\RSloop
	\fi
}
\newcommand\X{0}
\newcommand\RS[1]%
{\begin{tikzpicture}
		[every node/.style=
		{circle,draw,fill,minimum size=1.5pt,inner sep=0pt,outer sep=0pt},
		line cap=round
		]
		\coordinate(\X) at (0,0);
		\RSloop{#1}\relax

	\end{tikzpicture}
}
\makeatother
\newcommand\RSdef[1]{\expandafter\def\csname RS:#1\endcsname}
\newlength\RSu
\RSu=1ex
\RSdef{i}{\draw (\X) -- +(90:\RSu) node{};}
\RSdef{l}{\draw (\X) -- +(135:\RSu) node{};}
\RSdef{r}{\draw (\X) -- +(45:\RSu) node{};}
\RSdef{I}{\draw (\X) -- +(90:\RSu) coordinate(\X I);\edef\X{\X I}}
\RSdef{L}{\draw (\X) -- +(135:\RSu) coordinate(\X L);\edef\X{\X L}}
\RSdef{R}{\draw (\X) -- +(45:\RSu) coordinate(\X R);\edef\X{\X R}}
\RSdef{n}{\draw node{};}
\RSdef{c}{\draw node[fill=none]{};}

\begin{document}
	\author[1,\footnote{Email: zhangshlin9@mail2.sysu.edu.cn}]{\scshape  Shuolin Zhang}
	\author[1,\footnote{Email: luozhn7@mail.sysu.edu.cn }]{\scshape  Zhaonan Luo}
	\author[1,\footnote{Email: mcsyzy@mail.sysu.edu.cn}]{\scshape  Zhaoyang Yin}
	\affil[1]{\slshape School of Science, Sun Yat-sen University, ShenZhen 518107, PR China}
	\title{Results of Fractional Rough Burgers equation in $H^s$ space and its application}
	\date{}
    \maketitle	
    
	\begin{abstract}
	In this paper, we study the well-posedness of Fractional Rough Burgers equation driven by space-time white noise in $H^s(\m T)$ space. For the higher dissipation $\gamma\in(\frac{4}{3},2]$, we establish local well-posedness. Global well-posedness is further obtained when $\gamma$ is restricted to the interval $(\frac{5}{3}, 2]$.  For the lower dissipation $\gamma\in(\frac{5}{4},\frac{4}{3}]$, we use the regularity analysis derivation the para-controlled solution.\\
		\vspace{0.5cm}
		\noindent \textbf{Keywords: Rough PDEs; Paracontrolled solution; Well-posedness; Shallow water.}
	\end{abstract}
	\vspace{0.5cm}
	\textbf{2020 Mathematics Subject Classification: 60H15, 60H17, 42B37, 35Q35}
	\tableofcontents
	\maketitle
	
	\section{Introduction}
	In this paper, we will consider Fractional Rough Burgers (FRB) equation with weak dissipation in $H^s(\mathbb{T})$
	\begin{equation}\label{eq;Burgers;white noise}
		\p_t u-\Lambda^\gamma u=\p_x(u^2)+\xi,\quad u(0,x)=u_0.
	\end{equation}
	Here  $\mathbb{T}$ is some torus satisfied $\int_{\mathbb{T}}e^{ikx}=0$ for any $k\in \mathbb{Z}/\{0\}$(for example $\m R/2\pi \m Z$) and $\Lambda^\gamma$ is a Fourier multiplier satisfied $\ml F(\Lambda^\gamma f)=-\vert \xi\vert^\gamma\ml Ff$. $\xi$ is a centered Gaussian space-time random distribution with covariance
	\begin{equation}\label{eq;white noise}
		\m E[\xi(\omega,t,x)\xi(\omega,s,y)]=\delta_0(t-s)\delta_0(x-y),\quad t,s\geq0,\quad x,y\in\mathbb{T},
	\end{equation}
	where $\delta_0(x)$ is a Dirac measure satisfied $\delta_0(x)=0$ for $x\neq0$ and $\int_{\m T}\mathrm{d}\delta_0(x)=1$. 
	\par Equation \eqref{eq;Burgers;white noise} is a classical stochastic partial differential equation driven by additive noise. This class of problems has attracted considerable attention in recent years, largely motivated by the pioneering work on regularity structures by Hairer \cite{hairer2013solving,hairerTheoryRegularityStructures2014} and on paracontrolled calculus by Gubinelli and Perkowski \cite{gubinelli2015paracontrolled,gubinelli2017kpz}.
	\par The central objective of this paper is to understand how the dissipation parameter $\gamma$ affects the well-posedness and regularity of solutions for the specific quadratic nonlinearity $\partial_x(u^2)$. This nonlinearity is of particular interest due to its close connection to the KPZ equation; see, for example, \cite{corwin2012kardar}. In contrast, previous work such as \cite{gubinelli2015paracontrolled} studied the fractional stochastic Burgers equation with a general nonlinearity $F(u)\partial_x u$, establishing well-posedness in $\mathcal{C}^s$ for $s\in(\frac{1}{3},\frac{\gamma}{2}-\frac{1}{2})$ in the regime $\gamma\in(\frac{5}{3},2]$.
	\par The behavior of equation \eqref{eq;Burgers;white noise} with lower dissipation $\gamma<\frac{5}{3}$ in the Sobolev spaces $H^s$ remains largely unexplored. These spaces are of significant mathematical and physical relevance in the study of related equations such as the Burgers and Degasperis–Procesi equations (see, e.g., \cite{yin2003global,yin2004global,gui2011cauchy,gui2015global}). In this paper, we consider equation \eqref{eq;Burgers;white noise} under two regimes of lower dissipation:
	\begin{itemize}
		\item{For $\frac{4}{3}<\gamma\leq2$, we assume initial data $u_0 \in H^{\frac{1}{2}+}(\mathbb{T})$.}
		\item{For $\frac{5}{4} < \gamma \leq \frac{4}{3}$, we work with initial data in the intersection $u_0\in H^{\frac{1}{2}+}(\m T)\cap \ml C^{\frac{1}{2}+}(\m T)=W^{\frac{1}{2}+}(\m T)$.}
	\end{itemize}
	\par It is  also need to particularly noteworthy that the first author and co-authors have previously studied the randomized initial data problems for both the DP equation and Burgers equation
	\begin{equation}
		\p_t u-\Lambda^\gamma u=uu_x,\quad u_0^\omega=\sum_{n\in \m Z} h_n(\omega) \varphi(\Lambda-n)u_0.
	\end{equation}
	in \cite{chen2024probabilistic} for $\gamma\in(\frac{5}{4},\frac{3}{2})$, which is an another hot topic in the field of SPDEs. We omit detailed discussion here and refer interested readers to \cite{bourgainInvariantMeasuresThe2Ddefocusing1996,burqRandomDataCauchy2008a,burqRandomDataCauchy2008c,chenCauchyProblemHartree2015,benyiProbabilisticCauchyTheory2015,benyiHigherOrderExpansions2019,deng2022random,deng2024probabilistic}. Therefore, this paper can be viewed as a natural continuation of that earlier work under additional noise driving.
	\par  The primary objection of this paper is to establish fundamental results for the FRB equation in Sobolev spaces $H^s$ and extend analogous conclusions to the DP equations. We will address the potential difficulties associated with these problems, as well as the methods we will employ, by organizing the discussion into three main sections.
	\subsection{Probabilistic setting}
	The fact that $\xi \in \mathcal{C}^{-\frac{1}{2}-}(\mathbb{R}_+ \times \mathbb{T})$ introduces significant analytical difficulties for direct approaches. One strategy is to write equation as mild form
	\begin{equation}\label{eq;mild solution}
		u=P(t)u_0+\int_0^tP(t-s)\p_x(u^2)\mathrm{d}s+X(t).
	\end{equation}
	Here $P(t)=e^{t\Lambda^\gamma}$ is the fractional heat flow, and $X(t)=\int_0^t P(t-s)\xi(s)\mathrm{d}s$ denote the solution of linear equation
	\begin{equation}\label{eq;linear evolution;noise}
		\p_t X(t)-\Lambda^\gamma X(t)=\xi,\quad X_0=0.
	\end{equation}
	Noting that $X(t)\in C_T\ml C^\alpha(\mathbb{T})$ holds almost surely for any $\alpha<\frac{\gamma}{2}-\frac{1}{2}$, Such regularity has played a key role in establishing well-posedness for equations modeling thermally driven dissipative structures, notably the KPZ equation \cite{hairer2013solving} and the Burgers equation \cite{gubinelli2015paracontrolled,gubinelli2017kpz}. In this paper, we extend analogous regularity results to the Sobolev spaces $H^s$ and apply them to prove well-posedness for the class of equations considered.
	\par  Since the admissible range of $\alpha$ depends on the dissipation parameter $\gamma$, this paper presents two regimes with distinct analytical treatments.
	\begin{itemize}
		\item{Higher regularity regime ($\alpha \in (\frac{1}{6},\frac{1}{2})$): in this case, well-posedness can be established directly in the Sobolev space $H^\alpha(\mathbb{T})$.}
		\item{Lower dissipation regime ($\alpha \in (\frac{1}{8},\frac{1}{6})$, see Remark \ref{rmk;no endpoint}): here the analysis requires working in $W^{\alpha}(\mathbb{T})$.}
	\end{itemize}
	A common feature in both regimes is the isolation of a higher-regularity component in the solution—more precisely, a term belonging to $H^{\frac{1}{2}+}(\mathbb{T})$. This decomposition is made possible for lower dissipation levels $\gamma \in (\frac{5}{4},\frac{4}{3}]$ by means of the paracontrolled calculus, a powerful tool introduced for studying rough and singular stochastic partial differential equations \cite{gubinelli2015paracontrolled,gubinelli2017kpz,gubinelli2023paracontrolled,catellier2018paracontrolled}.
	\par A more in-depth consideration is that how lower the dissipation be? Let us consider FRB driven by $\vert D\vert^\beta \xi$
	\begin{equation}
		u_t-\Lambda^\gamma u=\p_x(u^2)+\vert D\vert^\beta\xi,\quad u(0,x)=u_0.
	\end{equation}
	Using the similar method in \cite{deng2022random,deng2024probabilistic}, we propose a conjecture regarding the relationship between $\gamma$ and $\beta$ 
	\begin{equation}
		\gamma\geq\frac23\beta+1
	\end{equation}
	in final section.  Similar methods have been used in \cite{deng2024probabilistic} to study probabilistic critical problems for the heat equation, wave equation, and Schrödinger equation.
	\subsection{Regularity analysis}
	After resolving the regularity issues for Gaussian processes, the next challenge lies in the study of well-posedness. Consider the Burgers equation with addition term $f$
	\begin{equation}\label{eq;Burgers;addition}
		\p_t v-\Lambda^\gamma v=\p_x(v^2)+f,\quad v(0,x)=v_0,
	\end{equation}
	where $f$ may depend on $u$. It takes the different structure of solution in two different ranges $\gamma\in(\frac{4}{3},2]$ and $\gamma\in (\frac{5}{4},\frac{4}{3}]$.
	\par More precisely, let $f(v)=2\p_x(vX)+\p_x(X^2)$, where $X$ satisfied \eqref{eq;linear evolution;noise}. Then $f(v)$ gains at most $\alpha$-order Sobolev regularity when $v$ belongs to $H^{\frac{1}{2}+}$. It derive the relation between $\alpha$ and $\gamma$ which satisfied $\alpha+\gamma\geq \frac{3}{2}$, since $v$ can gain $\alpha+\gamma-1$ at most. 
	\par For the lower $\alpha$ and $\gamma$, our objective is to similarly isolate a higher-regularity remainder term ($H^{\frac{1}{2}+}$) in the solution’s structure. As the method used in the highly dissipative regime is no longer applicable here, a more refined analysis is required. Using the Littlewood–Paley decomposition (see \cite{bahouriFourierAnalysisNonlinear2011}), we rewrite $vX$ and $X^2$ as 
	\begin{equation}\label{eq;decomposition; VX XX}
		vX=T_vX+R(v,X)+T_Xv,\quad X^2=2T_XX+R(X,X).
	\end{equation} 
	The terms $T_Xv$, $R(v,X)$ and $R(X,X)$ inherit higher regularity due to the fact that $X$  possesses $\alpha$-order H\"oler regularity. The main difficulty in the small-$\alpha$ case lies in handling the remaining terms $T_v X$ and $2T_XX$. We address this difficulty using the paracontrolled method (see Proposition \ref{prop;local result;1} for details), which ultimately yields the condition $2\alpha-1+\gamma\geq\frac{3}{2}$.
	\subsection{Shallow water equation}
	Next, we introduce the Degasperis-Procesi(DP) equation, which is one type of shallow water equation, has the following form
	\begin{equation}\label{eq;DP equation;no dissipative}
		m_t+um_x+3u_xm=0,\quad m(t,x)=(1-\partial_x^2)u(t,x),\quad u(0,x)=u_0(x).
	\end{equation}
	Then we can define the fluid velocity $u(t,x):=g\ast m:=(1-\p_x^2)^{-1}m$ and rewrite the DP equation as
	\begin{equation}\label{eq;DP equation;general}
		u_t+\frac{1}{2}\p_x(u^2)+\frac{3}{2}(1-\partial_x^2)^{-1}\partial_x(u^2)=0,\quad u(0,x)=u_0(x).
	\end{equation}
	The following citations \cite{degasperis1999asymptotic,degasperis2002new} provide physical background for this class of equations; a more systematic discussion can be found in \cite{ai2010global}. Regarding the mathematical theory, we note in particular that local well-posedness of \eqref{eq;DP equation;no dissipative} for initial data in $H^s$ with $s>\frac{3}{2}$ was established in \cite{liu2006global}, together with a precise blow-up criterion and blow-up results in \cite{liu2007blow}.
	\par In this paper, we also consider the nonlocal Rough DP equation with space-time noise $\xi$
	\begin{equation}\label{eq;DP;Burgers-like}
		u_t-\Lambda^\gamma u+\frac{1}{2}\p_x(u^2)+\frac{3}{2}(1-\partial_x^2)^{-1}\partial_x(u^2)=\xi,\quad u(0,x)=u_0(x),
	\end{equation}
	as an application of our result for Rough Burgers equation. Such method is also suitable for other type of shallow water equation with fractional dissipation.
	\subsection{Main result}
	\par We now present the main conclusions. 
	\begin{theorem}\label{th;local result}
		Let $u_0(x)\in H^{\frac{1}{2}+\delta}(\m T)$ for  $0<\delta<\frac{3\gamma}{8}-\frac{1}{2}$, $\gamma\in (\frac{4}{3},2]$, then there exists a postive $T^*$ and a solution $u(t,x)$ of equation \eqref{eq;Burgers;white noise}  satisfying 
		$$u(t,x)=X(t,x)+v(t,x),$$
		where $v(t,x)\in C_{T^*}H^{\frac{1}{2}+\delta}(\m T)$ a.s. and $X(t,x)$ satisfies \eqref{eq;linear evolution;noise} with $X(t,x)\in C_{T^*}H^\alpha(\m T)$ a.s for $\alpha\in(\frac{1}{6},\frac{\gamma}{2}-\frac{1}{2})$. Moreover we have $u\in C_{T^*}H^{\alpha}(\m T)$ a.s..
	\end{theorem}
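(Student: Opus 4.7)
I make the ansatz $u = X + v$, where $X$ is the linear stochastic object solving \eqref{eq;linear evolution;noise} and $v$ is a remainder that we aim to place in $C_{T^*}H^{\frac{1}{2}+\delta}(\m T)$. Subtracting \eqref{eq;linear evolution;noise} from \eqref{eq;Burgers;white noise} kills the noise and yields the pathwise equation
\begin{equation}
\p_t v-\Lambda^\gamma v=\tfrac{1}{2}\p_x(v^2)+\p_x(vX)+\tfrac{1}{2}\p_x(X^2),\qquad v(0,x)=u_0(x),
\end{equation}
whose mild form is
\begin{equation}
v(t)=P(t)u_0+\int_0^tP(t-s)\Bigl[\tfrac{1}{2}\p_x(v^2)+\p_x(vX)+\tfrac{1}{2}\p_x(X^2)\Bigr]ds=:\Phi(v)(t).
\end{equation}
Conditional on the full-probability event, provided by Lemma \ref{le;regularity}, that $X\in C_{T^*}H^\alpha\cap C_{T^*}\ml C^\alpha$ for some fixed $\alpha\in(\tfrac{1}{6},\tfrac{\gamma}{2}-\tfrac{1}{2})$, I run a Banach fixed point argument for $\Phi$ on a closed ball of $C_{T^*}H^{\frac{1}{2}+\delta}$, with $T^*$ chosen small depending on $\|u_0\|_{H^{1/2+\delta}}$ and the realisation of $X$.

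\textbf{Regularity of the three source terms.}
For the Burgers piece $\tfrac{1}{2}\p_x(v^2)$ the algebra property of $H^{\frac{1}{2}+\delta}$ gives $\|v^2\|_{H^{1/2+\delta}}\les\|v\|_{H^{1/2+\delta}}^2$, so $\p_x(v^2)\in H^{-\frac{1}{2}+\delta}$ and Schauder-type bounds for $P(t-s)$ return a $T^*$-small prefactor together with a gain of $\gamma-\epsilon$ derivatives, well above the target since $\gamma>1$. For the mixed term I invoke the Bony decomposition \eqref{eq;decomposition; VX XX}: $T_Xv$ keeps the regularity of $v$ because $X\in\ml C^\alpha$ with $\alpha>0$, and $R(v,X)\in H^{\frac{1}{2}+\delta+\alpha}$ by the resonant estimate (valid since $\frac{1}{2}+\delta+\alpha>0$); the only dangerous piece is $T_vX\in H^\alpha$. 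Thus the worst contribution is $\p_x(T_vX)\in H^{\alpha-1}$, and convolution with $P(t-s)$ returns $H^{\alpha-1+\gamma}$; demanding $\alpha-1+\gamma\ge\tfrac{1}{2}+\delta$ together with the ceiling $\alpha<\tfrac{\gamma}{2}-\tfrac{1}{2}$ forces $\frac{3\gamma}{2}>2$, i.e.\ precisely the hypothesis $\gamma>\tfrac{4}{3}$. For the purely stochastic source, since $2\alpha>\tfrac{1}{3}>0$ the product $X^2$ needs no renormalisation; writing $X^2=2T_XX+R(X,X)$ puts it classically in $H^\alpha$, so $\p_x(X^2)\in H^{\alpha-1}$ and the Duhamel integral again lands in $H^{\alpha-1+\gamma}\subset H^{\frac{1}{2}+\delta}$.

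\textbf{Contraction and conclusion.}
The three bounds together show $\Phi$ preserves a ball of radius $R$ in $C_{T^*}H^{\frac{1}{2}+\delta}$ for $T^*$ small. The difference estimates use the bilinear identity $v_1^2-v_2^2=(v_1-v_2)(v_1+v_2)$ for the Burgers term and linearity in $v$ for the $\p_x(vX)$ term, producing a contraction factor $\les (T^*)^{\epsilon}$ for some $\epsilon>0$. Picard iteration then yields a unique fixed point $v\in C_{T^*}H^{\frac{1}{2}+\delta}$, and $u=X+v$ automatically lies in $C_{T^*}H^\alpha$ since the regularity of $X$ dominates that of $v$ at the $H^\alpha$ scale.

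\textbf{Main obstacle.}
The decisive accounting is the one for $\p_x(T_vX)$: the paraproduct of a low-regularity $X$ against a smooth $v$ is only as regular as $X$, and that lone exponent $\alpha$ must simultaneously absorb one derivative and leave enough room, after the $\gamma$-gain of $P(t-s)$, to climb back to $H^{\frac{1}{2}+\delta}$. Against the ceiling $\alpha<\tfrac{\gamma}{2}-\tfrac{1}{2}$ dictated by the Gaussian regularity of $X$, this balance selects $\gamma=\tfrac{4}{3}$ as the sharp threshold above which a direct Banach fixed point suffices and the para-controlled construction of Proposition \ref{prop;local result;1} becomes unnecessary.
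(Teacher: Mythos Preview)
Your proposal is correct and follows essentially the same route as the paper: the ansatz $u=X+v$, the pathwise equation for $v$, the use of $X\in C_TH^\alpha\cap C_T\ml C^\alpha$ from Lemma~\ref{le;regularity}, and a fixed-point argument in $C_{T^*}H^{1/2+\delta}$ with the threshold $\gamma>4/3$ emerging from the balance $\alpha+\gamma-1>1/2$.

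The only notable difference is organizational. The paper packages the three product estimates into a single bilinear lemma (Lemma~\ref{le;est;bilinear}, derived from Lemma~\ref{le;est;bilinear;general} and Moser-type bounds on the product in $W^\alpha$) and then invokes the abstract iteration Lemma~\ref{le;Picard th} with $a=P(t)u_0+B(X,X)$, $L(v)=2B(v,X)$ and $\ml B(v,v)=B(v,v)$. You instead unfold the Bony decomposition by hand, isolate $T_vX$ as the critical piece, and run a bare-hands contraction. Your approach makes the mechanism behind the threshold $\gamma=4/3$ more transparent (it is exactly the paraproduct $T_vX$ that saturates), while the paper's packaging is cleaner for the actual verification of the contraction conditions. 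Both need the H\"older regularity $X\in\ml C^\alpha$ (not just $H^\alpha$), since otherwise the product $X^2\in H^{2\alpha-1/2}$ would force $\gamma>3/2$; you use it in the paraproduct $T_Xv$ and the resonant term, the paper hides it inside the $W^\alpha$ norm in Lemma~\ref{le;est;bilinear}~(2) and~(3).
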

	\begin{remark}
		`a.s.' represents almost surely. We say event $A$ is almost surely if and only if $\m P(A)=1$, where $\m P$ represents the probability associated with $(\Omega,\ml F,\m P)$. We will omit this description in most situation, except we need distinct such meaning.  
	\end{remark}
	\begin{remark}
		Recall the equation in \cite{gubinelli2015paracontrolled}
		\begin{equation}
			\p_tu-\Lambda^\gamma u=G(u)\p_xu+\xi,\quad u(0,x)=u_0,
		\end{equation}
		where $G\in C_b^3(\mathbb{R}^d,L(\mathbb{R}^d,\mathbb{R}^d))$. The main difference between our result and \cite{gubinelli2015paracontrolled} lies in the fact that our solution can back to the original equation \eqref{eq;Burgers;white noise}. Since $G(u)\p_xu$ is ill-defined when $\alpha\in(\frac{1}{3},\frac{\gamma}{2}-\frac{1}{2})$, they actually study the ``generalized" solution, which is the limit in probability sense of $u^\epsilon$ satisfying the following mollified equation
		\begin{equation}
			\p_t u^\epsilon-\Lambda^\gamma u^\epsilon=G(u^\epsilon)\p_xu^\epsilon+\xi^\epsilon,\quad u(0,x)=u_0.
		\end{equation}
		Here $\xi^\epsilon=\epsilon^{-1}\psi(\epsilon\cdot)\ast \xi$ for some $\psi\in \ml S$ with $\int\psi \mathrm{d}t=1$. Similar approaches are frequently used for some singular SPDEs in \cite{catellier2018paracontrolled,hairer2013solving,gubinelli2017kpz}. 
	\end{remark}
	When $\gamma>\frac{3}{2}$, we can get the global result as follows.
	\begin{theorem}\label{th;global result}
		Let $\gamma\in(\frac{5}{3},2]$, $u_0\in H^\sigma $ for $\sigma>0$ and $\int_{\m T}u(x)\mathrm{d}x=0$, then Rough Burgers equation \eqref{eq;Burgers;white noise} has a global mild solution $u=v+X$ where $X$ satisfies \eqref{eq;linear evolution;noise} and $v$ belongs to $C([0,T];L^2(\mathbb{T}))\cap L^2([0,T];H^\frac{\gamma}{2}(\mathbb{T}))$ for any $T\geq0$. 
	\end{theorem}
	\begin{remark}\label{rem;result;DP}
		Theorem \ref{th;local result} and \ref{th;global result} are also valid for Rough DP equation. In fact, the nonlinear term $\frac{3}{2}(1-\partial_x^2)^{-1}\partial_x(u^2)$ in the DP equation does not introduce additional difficulties for the local well-posedness problem. For global results, we need to utilize the special structure of the DP equation to derive energy estimates as Proposition \ref{le;est;energy} and Lemma \ref{le;est;energy;higher}.
	\end{remark}
	\par For the lower dissipation $\gamma\in(\frac{5}{4},\frac{4}{3}]$, let $\ml Q$ be the linear evolution of $\p_x X$ such that
	\begin{equation}\label{eq;linear evolution;DX}
		\p_t \ml Q(t)-\Lambda^\gamma \ml Q(t)=\p_x X(t),\quad \ml Q(0)=0.
	\end{equation}
	Let $(u,u',u^\sharp)$ be a triple satisfying
	\begin{equation}
		u=X+u'\pprec \ml Q+u^\sharp,
	\end{equation}
	where $u'\pprec \ml Q$ is the temporal mollified version of $T_{u'}\ml Q$(See \eqref{eq;para-product;modify}). $u^\sharp$ is the higher regularity term.
	
	\begin{theorem}\label{th;paracontrolled}
		Let $u_0\in \ml C^{\frac{1}{2}+\delta}\cap H^{\frac{1}{2}+\delta}=W^{\frac{1}{2}+\delta}$ for  $0<\delta<\frac{\gamma}{2}-\frac{5}{8}$, $\gamma\in(\frac{5}{4},\frac{4}{3}]$. Define $\ml L=\p_t-\Lambda^\gamma$, then there exists a positive $T^*$ and a unique triple $(u,u',u^\sharp)\in C_TW^{\alpha}\times C_TW^{\frac{1}{8}+\delta}\times C_TW^{\frac{1}{2}+\delta}$ satisfying
		\begin{equation}\label{eq;paracontrolled eqaution}
			\begin{aligned}
				u'=&2X+2u'\pprec\ml Q+2 u^\sharp,\\
				\ml L u^\sharp=&\ml L u-\ml LX-\ml Lu'\pprec\ml Q,\quad u^\sharp(0)=u_0\\
				u=&X+u'\pprec\ml Q+u^\sharp,
			\end{aligned}
		\end{equation}
		where $X(t)$ satisfies \eqref{eq;linear evolution;noise} with $X(t,x)\in C_{T^*}W^\alpha(\m T)$ a.s for $\alpha\in(\frac{1}{8},\frac{\gamma}{2}-\frac{1}{2})$. Moreover, $u$ is a solution of \eqref{eq;Burgers;white noise}.
	\end{theorem}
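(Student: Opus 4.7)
The strategy is a Banach fixed-point argument on a closed ball of the product space
\[
X_{T^*} = C_{T^*} W^{\alpha}(\mathbb{T}) \times C_{T^*} W^{1/8+\delta}(\mathbb{T}) \times C_{T^*} W^{1/2+\delta}(\mathbb{T}),
\]
with the three equations of \eqref{eq;paracontrolled eqaution} read as defining a self-map $\Phi$ on $X_{T^*}$. The paracontrolled ansatz $u = X + u'\pprec \mathcal{Q} + u^\sharp$ is forced by the range $\gamma \in (5/4, 4/3]$: as explained in Section 1.2, the naive Duhamel formulation produces the low-high paraproduct $T_u \partial_x X$, whose Besov indices sum to $2\alpha - 1 < -2/3$ since $\alpha < 1/6$. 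Following the paracontrolled calculus of \cite{gubinelli2015paracontrolled, catellier2018paracontrolled}, we isolate this singularity via the explicit term $u'\pprec \mathcal{Q}$, where $\mathcal{Q}$ solves \eqref{eq;linear evolution;DX} and enjoys regularity $\alpha + \gamma - 1 \in (3/8, 1/2]$, leaving a remainder $u^\sharp$ that can live in the regular space $W^{1/2+\delta}$.

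The core analytic ingredients I would use are: (i) the stochastic inputs $X \in C_{T^*} W^\alpha$ and $\mathcal{Q} \in C_{T^*} W^{\alpha+\gamma-1}$, both obtained via Lemma \ref{le;regularity}; (ii) Schauder estimates for $\mathcal{L}^{-1}$ gaining $\gamma$ derivatives on the $W^s$ scale, with a small $(T^*)^\epsilon$ factor that drives the contraction; (iii) the mollified commutator identity making $\mathcal{L}(f\pprec g) - f\pprec \mathcal{L} g$ strictly smoother than either summand; (iv) the Gubinelli--Imkeller--Perkowski commutator which, for paracontrolled $f$, replaces $\Pi(f\pprec g, h)$ by $f\cdot \Pi(g, h)$ modulo a smoother remainder. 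I then define $\Phi$ so that $\tilde u'$ is produced by the first line of \eqref{eq;paracontrolled eqaution}, $\tilde u^\sharp$ by Schauder applied to the second line, and $\tilde u$ algebraically from the third, and verify ball-stability and Lipschitz bounds for $T^*$ small enough to produce a unique fixed point.

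The main obstacle is placing the driver $\tfrac12 \partial_x(u^2) - \mathcal{L}(u'\pprec \mathcal{Q})$ of the $u^\sharp$-equation in $C_{T^*} W^{1/2+\delta-\gamma}$. Substituting the ansatz and Bony's decomposition \eqref{eq;decomposition; VX XX}, the dangerous low-high paraproducts against $\partial_x X = \mathcal{L}\mathcal{Q}$ cancel with $\mathcal{L}(u'\pprec\mathcal{Q})$ modulo a benign commutator, because the first line of \eqref{eq;paracontrolled eqaution} encodes that the paracontrolling coefficient for the rough driver is precisely $u'$. The pathwise-undefined resonances of the form $\Pi(u', \partial_x X)$ are then handled by iterating the paracontrolled structure: substituting $u' = X + 2u'\pprec u^\sharp$ splits each such resonance into a renormalised stochastic object like $\Pi(X, \partial_x X)$ (constructed almost surely via the probabilistic analysis of the previous section) plus a residue for which the GIP commutator delivers the required smoothness gain. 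A careful Besov-index bookkeeping closes the estimates precisely at the threshold $\gamma = 5/4$, which is why this is the sharp lower bound in the theorem. Finally, to verify that $u$ solves \eqref{eq;Burgers;white noise}, I apply $\mathcal{L}$ to the third line of \eqref{eq;paracontrolled eqaution}, substitute the second to express $\mathcal{L}u^\sharp$, and unwind $\mathcal{L}X = \xi$ and $\mathcal{L}\mathcal{Q} = \partial_x X$; the algebraic role of the first line is precisely to ensure that the surviving nonlinear terms reassemble to $\tfrac12\partial_x(u^2)$, while initial data are matched by $u^\sharp(0) = u_0$ together with $X(0) = \mathcal{Q}(0) = 0$.
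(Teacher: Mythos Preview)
Your overall architecture (paracontrolled ansatz plus a Banach fixed point, with Schauder gaining $\gamma$ derivatives and the modified-paraproduct commutator $\ml L(f\pprec g)-f\pprec\ml Lg$ providing smoothness) matches the paper. Two points of divergence are worth flagging.

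First, and most importantly, you misidentify the analytic difficulty. You claim that ``pathwise-undefined resonances of the form $\Pi(u',\partial_x X)$'' arise and must be tamed via the GIP commutator and a renormalised stochastic object $\Pi(X,\partial_x X)$. In the regime $\gamma\in(\tfrac54,\tfrac43]$ (equivalently $\alpha\in(\tfrac18,\tfrac16)$) no such ill-defined resonance ever appears. After substituting $u=X+u^{\ml Q}$ with $u^{\ml Q}:=u'\pprec\ml Q+u^\sharp$, the only resonant products in $\tfrac12\partial_x(u^2)$ are $X\circ X$, $X\circ u^{\ml Q}$ and $u^{\ml Q}\circ u^{\ml Q}$. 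Since $X\in W^{\alpha}$ with $\alpha>\tfrac18$ and $u^{\ml Q}$ inherits the regularity of $\ml Q\in W^{\alpha+\gamma-1}\subset W^{3/8+}$, every one of these resonances has strictly positive Besov index sum and is defined pathwise by Lemma~\ref{le;est;paraproduct}. The object $\partial_x X$ only enters through $\ml L\ml Q$, and that sits inside the \emph{paraproduct} $u'\pprec\ml L\ml Q$, never in a resonance. Consequently the paper's Proposition~\ref{prop;local result;1} is a purely deterministic fixed-point argument for the system \eqref{eq;paracontrolled solution}, with the stochastics confined to Lemma~\ref{le;regularity}; no renormalisation and no GIP commutator (Lemma~\ref{le;commutator;resonant}) are invoked. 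Your plan would still go through, but you would discover mid-proof that the machinery you set up is idle.

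Second, a structural difference: the paper runs the fixed point on the \emph{pair} $(u^\sharp,u')$ in $\ml W^{\frac12+\delta}(T)\times\ml W^{\frac18+\delta}(T)$ using the bespoke Picard scheme of Lemma~\ref{le;Picard th;specific}, and only afterwards defines $u$ algebraically by the third line of \eqref{eq;paracontrolled eqaution}. Your triple-space fixed point is equivalent but slightly redundant, since $u$ is determined by $(u',u^\sharp)$.
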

	\begin{remark}
		$\prec$ and $\pprec $ are referred to as the para-product and modified para-product, which will be introduced in Lemma \ref{le;est;paraproduct} and \eqref{eq;para-product;modify}. Let $v:=u'\pprec \ml Q$, then $v$ satisfies the equation
		\begin{align*}
			\p_tv-\Lambda^\gamma v='regular\; term'+u'\prec \p_x X.
		\end{align*}
		This term will appear in the equation for $u^\sharp$. Our strategy is to use a specific $u'$ with $-\ml L(u'\pprec\ml Q)$ to cancel out this term to make sure that $u^\sharp$ can gain higher regularity (See Proposition \ref{prop;local result;1}). We will show it in Section \ref{sec;Para-controlled}.
	\end{remark}
	\begin{remark}
		In fact, we can't prove such result with only $H^\alpha$ space under our setting. Let $(u',\ml Q,u^\sharp)\in (\ml C^\alpha\times H^{\beta}\times H^{\eta})$, then $\alpha+\beta=\eta$ is always valid. Since $\ml Q$ can gain $(\alpha-1+\gamma)$-order Sobolev regularity, $u^\sharp$ gains $(2\alpha-1+\gamma)$-order Sobolev regularity at most by proof of Lemma \ref{le;commutator;modfied paraproduct, L}. To establish the $C^\alpha$ Hölder regularity of $u'$, it is necessary for the remainder $u^\sharp$ to possess Sobolev regularity of order at least $H^{\alpha + \frac{1}{2} + }$, this is valid only when $\gamma>\frac{4}{3}$. 
	\end{remark}
	\begin{remark}
		We can extend the range to $\gamma\in(1,2]$, which means $\alpha\in(0,\frac{1}{2})$. We omit these proofs because in this paper, we primarily aim to demonstrate the effectiveness of the paracontrolled method in enhancing the regularity of solutions.
	\end{remark}
	\subsection{Organization of the paper}
	This paper is organized as follows. In Section \ref{sec;basic tools}, we introduce some notations and basic tools. In Section \ref{sec;regular case}, we prove Theorem \ref{th;local result} and Theorem \ref{th;global result} for the fractional stochastic Burgers equation, and we also prove the result in Remark \ref{rem;result;DP} for the rough Degasperis–Procesi equation. In Section \ref{sec;Para-controlled}, we prove Theorem \ref{th;paracontrolled} for the paracontrolled solution. In Section \ref{sec;conjecture}, we provide a conjecture on the range of $\gamma$ in Sobolev space and discuss some future research directions.
	\section{Basic tool}\label{sec;basic tools}
	In this section, we introduce some basic stochastic and deterministic tools, more useful technical estimation tools will be introduced in the appendix.
	\subsection{Littlewood-Paley theory}\label{subsec;Littlewood-Paley }
	\par The first key tool is Littlewood-Paley decomposition. We introduce an operator named \textbf{``nonhomogeneous dyadic blocks''} $\Delta_j$ for $j\geq -1$, and satisfying that for any $u$ belong to $\ml S'(\mathbb{T}^d)$ (It's also valid for $\m R^d$), we have decomposition 
	$$u=\sum_{j\geq -1} \Delta_j u.$$
	More precisely, it's associated with two functions $\rho_l(k)$ and $\chi(k)$ under Fourier mode such that 
	$$\ml F(\Delta_lu)(k)=\rho(2^{-l}k)u_k,\quad l\geq 0\quad and\quad \ml F(\Delta_{-1}u)(k)=\ml \chi(k)u_k.$$
	Here $\chi$ and $\rho$ are two radial functions, valued in the interval $[0,1]$, belonging respectively to $\ml D(B(0,\frac{4}{3}))$ and $\ml D(\{k\in \mathbb{Z}^d:\frac{3}{4}\leq k\leq \frac{8}{3}\})$, such that 
	$$\chi(k)+\sum_{l\geq 0}\rho(2^{-l}k)=1,\quad k\in \mathbb{Z}^d.$$
	It can define nonhomogeneous Besov space $B_{p,r}^s(\mathbb{T}^d)$ which contains H\"older space $\ml C^s(\mathbb{T}^d):=B_{\infty,\infty}^s(\mathbb{T}^d)$ for $\alpha\in(0,1)$ and nonhomogeneous Sobolev space $H^s(\mathbb{T}^d):=B^s_{2,2}(\mathbb{T}^d)$ such that 
	$$B_{p,r}^s(\mathbb{T}^d):=\left\{ u\in S':\left(\sum_{l\geq-1}2^{lsr}\Vert \Delta_lu\Vert_{L^p(\mathbb{T}^d)}^r\right)^\frac{1}{r}<\infty\right\},$$
	also define the norm. Also define the Fourier multiplier
\[
\mathcal{F}(\dot \Delta_j u)(k) = \rho(2^{-l} k) u_k, \quad l \in \mathbb{Z},
\]
where \(\rho\) is a smooth cut-off function supported in an annulus. Then we introduce the homogeneous Besov space
\[
\dot B_{p,r}^s(\mathbb{T}^d) := \left\{ u \in \mathcal{S}_h' : \left( \sum_{l\in\m Z} 2^{lsr} \| \dot \Delta_l u \|_{L^p(\mathbb{T}^d)}^r \right)^{1/r} < \infty \right\},
\]
where \(\mathcal{S}_h'\) denotes the space of tempered distributions \(u\) such that for any \(\theta \in\ml D(\mathbb{T}^d)\) with value $1$ near 0,
\begin{equation}\label{eq;theta}
    \lim_{\lambda \to \infty} \| \theta(\lambda D) u \|_{L^\infty} = 0.
\end{equation}
It can be verified that \(\dot B_{2,2}^s(\mathbb{T}^d)\) coincides with the homogeneous Sobolev space
\[
\dot H^s(\mathbb{T}^d) = \left\{ u \in \mathcal{S}_h' : \left( \sum_{k \neq 0} |k|^{2s} |\mathcal{F}(u)(k)|^2 \right)^{1/2} < \infty \right\}.
\]
      It is straightforward to verify that condition \eqref{eq;theta} implies \(\int_{\mathbb{T}} u(x) \, dx = \widehat{u}(0) = 0\). Consequently, a function \(f\) belongs to \(\dot H^s\) if and only if \(f \in H^s\) and \(\int_{\mathbb{T}} f(x) \, dx = 0\).
       Under this setting, we introduce para-product decomposition which give a product rules in $\ml S'(\mathbb{T}^d)$. Let $S_j u=\sum_{k\leq j-1}\Delta_ku $, then we have the estimate
	\begin{lemma}[\cite{bahouriFourierAnalysisNonlinear2011}]\label{le;est;paraproduct}
		Define $f\prec g:=\sum_{j}S_{j-1}f\Delta_jg$, $f\succ g:=g\prec f$, $f\circ g:=\sum_{\vert k-j\vert\leq 1}\Delta_k f\Delta_j g$, let $p,r\in[1,\infty]$ and $s\in\m R$, then we have the following estimates
		\begin{itemize}
			\item{1. If $k\in \mathbb{N}$, then
				$$\Vert f\prec g\Vert_{B^s_{p,r}}\leq C\Vert f\Vert_{L^\infty}\Vert D^k g\Vert_{B_{p,r}^{s-k}}.$$}
			\item{2. If $k\in \mathbb{N}$, $t<0$ and $\frac{1}{r}=\min(1,\frac{1}{r_1}+\frac{1}{r_2})$, then $$\Vert f\prec g\Vert_{B_{p,r}^{s+t}}\leq C\Vert f\Vert_{B_{\infty,r_1}^t}\Vert D^k g\Vert_{B_{p,r_2}^{s-k}}.$$}
			\item{3. If $\frac{1}{p}=\frac{1}{p_1}+\frac{1}{p_2}\leq1$, $\frac{1}{r}=\frac{1}{r_1}+\frac{1}{r_2}\leq1$ and $s=s_1+s_2>0$, then
				$$\Vert f\circ g\Vert_{B_{p,r}^{s}}\leq C\Vert f\Vert_{B_{p_1,r_1}^{s_1}}\Vert g\Vert_{B_{p_2,r_2}^{s_2}}.$$}
		\end{itemize}
	The above estimate also holds if we replace \(\dot\Delta_j\) with \(\Delta_j\) and \(\dot S_j\) with \(S_j\).
    \end{lemma}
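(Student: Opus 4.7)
The plan is to follow the standard Bony paraproduct machinery as laid out in Bahouri--Chemin--Danchin, reducing each of the three inequalities to Bernstein's inequality plus a weighted $\ell^r$-summation. Throughout, the guiding principle is to exploit the Fourier localization of the constituent operators to see that only $O(1)$ dyadic frequencies interact in each block $\Delta_l$ on the left-hand side.

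\textbf{Part (1).} I would start by observing that $S_{j-1}f \,\Delta_j g$ has Fourier support in an annulus of scale $2^j$, so $\Delta_l(f\prec g)\neq 0$ only for $|l-j|\leq N_0$ for some universal $N_0$. On each such block I would use the trivial $L^\infty$-bound $\|S_{j-1}f\|_{L^\infty}\leq \|f\|_{L^\infty}$ together with the Bernstein-type identity $\|\Delta_j g\|_{L^p}\lesssim 2^{-jk}\|\Delta_j D^k g\|_{L^p}$ (justified by the annular Fourier support and a Fourier multiplier argument). Multiplying by $2^{ls}$, shifting the index $l\mapsto j$, and taking the $\ell^r$-norm then gives $\|f\prec g\|_{B^s_{p,r}}\lesssim \|f\|_{L^\infty}\|D^k g\|_{B^{s-k}_{p,r}}$.

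\textbf{Part (2).} The modification for $t<0$ is the bound
\begin{equation*}
\|S_{j-1}f\|_{L^\infty}\leq \sum_{k\leq j-2}\|\Delta_k f\|_{L^\infty}
= \sum_{k\leq j-2} 2^{-kt}\bigl(2^{kt}\|\Delta_k f\|_{L^\infty}\bigr).
\end{equation*}
Since $t<0$, the geometric weights $2^{-kt}$ are summable at $-\infty$, and a discrete Hölder inequality in the $k$-variable gives $\|S_{j-1}f\|_{L^\infty}\lesssim 2^{-(j-1)t}\|f\|_{B^t_{\infty,r_1}}$. Inserting this into the argument of Part (1) produces an extra factor $2^{-jt}$, which combines with the $2^{ls}$ weight to match $B^{s+t}_{p,r}$; the $\ell^r$-summation then uses $\tfrac{1}{r}=\min(1,\tfrac{1}{r_1}+\tfrac{1}{r_2})$ via a Young-type convolution inequality on $\mathbb{Z}$.

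\textbf{Part (3).} For the resonance term $f\circ g=\sum_{|k-j|\leq 1}\Delta_k f\,\Delta_j g$, the Fourier support of $\Delta_k f\,\Delta_j g$ lies in a ball of radius $\lesssim 2^{\max(k,j)}$ rather than an annulus, so $\Delta_l(f\circ g)\neq 0$ forces only $j\geq l-N_0$. Hölder in $L^p$ gives $\|\Delta_k f\,\Delta_j g\|_{L^p}\leq \|\Delta_k f\|_{L^{p_1}}\|\Delta_j g\|_{L^{p_2}}$, and then
\begin{equation*}
2^{ls}\|\Delta_l(f\circ g)\|_{L^p}\lesssim \sum_{j\geq l-N_0} 2^{(l-j)s}\bigl(2^{js_1}\|\Delta_j f\|_{L^{p_1}}\bigr)\bigl(2^{js_2}\|\Delta_j g\|_{L^{p_2}}\bigr).
\end{equation*}
Because $s>0$, the weights $2^{(l-j)s}$ (where $j\geq l-N_0$) are summable, so a discrete Young's inequality in the $l$-variable followed by Hölder with $\tfrac{1}{r}=\tfrac{1}{r_1}+\tfrac{1}{r_2}$ yields the claim.

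The main obstacle is purely bookkeeping in Part (3): one must be careful that the resonance term has ball-type rather than annular Fourier support, which is precisely why the positivity $s_1+s_2>0$ is indispensable (otherwise the sum over $j\geq l-N_0$ diverges). Parts (1) and (2) are essentially direct once Bernstein is in hand.
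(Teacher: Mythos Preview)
Your proposal is correct and follows exactly the standard Bony paraproduct argument from Bahouri--Chemin--Danchin. Note that the paper does not supply its own proof of this lemma at all: it is stated with a direct citation to \cite{bahouriFourierAnalysisNonlinear2011} and no argument is given, so there is nothing further to compare.
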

	By above estimate in $H^s$ space, we can check that Moser-type estimate hold.
	\begin{lemma}[Moser-type estimate]\label{le;Moser}
		Let $s>0$, $\epsilon>0$, then the following estimates hold:
		\begin{itemize}
			\item{(1) for $0<s<\frac{1}{2}$, $$\Vert fg\Vert_{H^{2s-\frac{1}{2}}}\leq C\Vert f\Vert_{H^s}\Vert g\Vert_{H^s}.$$}
			\item{(2) for $s>\frac{1}{2}$, $$\Vert fg\Vert_{H^{s}}\leq C\Vert f\Vert_{H^s}\Vert g\Vert_{H^s}.$$} 
			\item{(3) for $s=\frac{1}{2}$, $$\Vert fg\Vert_{H^{s-\epsilon}}\leq C\Vert f\Vert_{H^s}\Vert g\Vert_{H^s}.$$}
		\end{itemize}
	\end{lemma}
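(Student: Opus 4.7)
The plan is to estimate each piece of the Bony paraproduct decomposition
\[ fg = f \prec g + f \succ g + f \circ g \]
separately using Lemma \ref{le;est;paraproduct}, combined with the Sobolev embedding $H^s \hookrightarrow L^\infty$ when $s > 1/2$, and with the Bernstein-type embedding $H^s \hookrightarrow B^{s-1/2}_{\infty,2}$ when $s \leq 1/2$ (the latter follows from bounding $\|\Delta_l f\|_{L^\infty} \lesssim 2^{l/2}\|\Delta_l f\|_{L^2}$ on each dyadic block). By symmetry of $\prec$ and $\succ$, it suffices to estimate $f \prec g$ and the resonant term $f \circ g$.

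For part (2) with $s > 1/2$: item 1 of Lemma \ref{le;est;paraproduct} gives $\|f \prec g\|_{H^s} \lesssim \|f\|_{L^\infty} \|g\|_{H^s} \lesssim \|f\|_{H^s}\|g\|_{H^s}$. For the resonant term, item 3 with $s_1 = s_2 = s$ and $p_1 = p_2 = r_1 = r_2 = 2$ yields $\|f \circ g\|_{H^{2s}} \lesssim \|f\|_{H^s}\|g\|_{H^s}$, and since $2s > s$ the embedding $H^{2s} \hookrightarrow H^s$ closes the estimate.

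For part (1) with $0 < s < 1/2$: now $H^s \not\hookrightarrow L^\infty$, so I would instead invoke item 2 of Lemma \ref{le;est;paraproduct} with $t = s - 1/2 < 0$, $k = 0$, $p = 2$, and indices $r_1 = 2$, $r_2 = \infty$ (so that $1/r = 1/r_1 + 1/r_2 = 1/2$, i.e. $r = 2$). Using $H^s \hookrightarrow B^{s-1/2}_{\infty,2}$ and the trivial embedding $H^s = B^s_{2,2} \hookrightarrow B^s_{2,\infty}$, one obtains $\|f \prec g\|_{H^{2s-1/2}} \lesssim \|f\|_{B^{s-1/2}_{\infty,2}}\|g\|_{B^s_{2,\infty}} \lesssim \|f\|_{H^s}\|g\|_{H^s}$. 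The resonant term still satisfies $\|f \circ g\|_{H^{2s}} \lesssim \|f\|_{H^s}\|g\|_{H^s}$ by item 3 (as $2s > 0$), and $H^{2s} \hookrightarrow H^{2s-1/2}$ finishes the bound.

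For the endpoint case (3) with $s = 1/2$, the embedding $H^{1/2} \hookrightarrow L^\infty$ just fails, but losing an arbitrarily small $\epsilon > 0$ gives $H^{1/2} \hookrightarrow B^{-\epsilon}_{\infty,2}$; I would then repeat the argument from part (1) with $t = -\epsilon$ in item 2 of Lemma \ref{le;est;paraproduct} to get $\|f \prec g\|_{H^{1/2-\epsilon}} \lesssim \|f\|_{H^{1/2}}\|g\|_{H^{1/2}}$, while the resonant term is controlled in $H^{1-\epsilon}$ via item 3. The only real subtlety lies in choosing the Besov exponents $(r_1, r_2)$ in item 2 so that $r = 2$ matches the target $H$-space and both factors are controlled by $H^s$; once this bookkeeping is done, all three assertions follow immediately.
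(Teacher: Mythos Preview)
Your approach is exactly what the paper has in mind --- it simply states that the lemma follows from the paraproduct estimates of Lemma~\ref{le;est;paraproduct}, and you have carried out the details correctly.

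One small slip in your bookkeeping for the resonant term: with $p_1=p_2=r_1=r_2=2$, item~3 of Lemma~\ref{le;est;paraproduct} gives $1/p=1/p_1+1/p_2=1$ and $1/r=1$, so it places $f\circ g$ in $B^{2s}_{1,1}$, not in $H^{2s}$. This is harmless, though, since in dimension one the Besov embedding $B^{2s}_{1,1}\hookrightarrow B^{2s-1/2}_{2,1}\hookrightarrow H^{2s-1/2}$ delivers exactly the regularity needed in all three cases (in part~(2) one has $2s-\tfrac12>s$, in part~(1) the target is $2s-\tfrac12$ itself, and in part~(3) one has $\tfrac12>\tfrac12-\epsilon$).
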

	\par To explain the paraproduct, we first note that two distributions $f$ and $g$ cannot, in general, be multiplied directly. A sufficient condition for such a product to be well defined is that the requirement of  Lemma \ref{le;est;paraproduct} be satisfied, which at least necessitates $s_1+s_2\geq0$ . Moreover, $f\prec g$ and $f\succ g$ are called para-products since they are always well defined regardless of the regularity of $f$ and $g$. The term $f\circ g$ is referred to as the resonant term, because it involves interactions of frequencies that are close to each other, which typically restricts the range of regularities for which it can be controlled.
	\par For our convenience, we give the following notation
	$$W_p^\alpha=B_{p,p}^\alpha,\quad W^\alpha=W_2^\alpha\cap W_\infty^\alpha,\quad$$
	and note that $W^\alpha$ is a Banach algebra when $\alpha>0$.
	\begin{lemma}
		If $f\in W^\alpha$, then $f\in W_p^\alpha$ for any $p\geq2$. Moreover, we have estimate
		\begin{align*}
			\Vert f\Vert_{W_p^\alpha}\leq\Vert f\Vert_{H^{\alpha}}^{\frac{2}{p}}\Vert f\Vert_{\ml C^{\alpha}}^{\frac{p-2}{p}}.
		\end{align*}
		\begin{proof}
			By definition of $B_{p,p}^\alpha$ and interpolation in $L^p$, we have
			\begin{align*}
				\Vert f\Vert_{B_{p,p}^\alpha}^p=&\sum_{j\geq-1}2^{j\alpha p}\Vert \Delta_jf\Vert^{p}_{L^p}\\
				\leq& \sum_{j\geq-1}2^{j\alpha p}(\Vert \Delta_jf\Vert^{\frac{2}{p}}_{L^2}\Vert \Delta_jf\Vert^{\frac{p-2}{p}}_{L^\infty})^p\\
				\leq& (\sup_{j\geq-1}2^{j\alpha}\Vert \Delta_jf\Vert_{L^\infty})^{p-2}\sum_{j\geq-1}2^{j\alpha 2}\Vert \Delta_jf\Vert^{2}_{L^2}\\
				\leq&\Vert f\Vert_{H^{\alpha}}^{2}\Vert f\Vert_{B_{\infty,\infty}^{\alpha}}^{p-2}.
			\end{align*}
			Then our lemma is proved.
		\end{proof}
	\end{lemma}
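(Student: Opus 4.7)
The plan is to prove the interpolation bound block-by-block at the Littlewood--Paley level and then sum. Starting directly from the definition of the $B^\alpha_{p,p}$ norm, one has $\Vert f\Vert_{W_p^\alpha}^p = \sum_{j\geq -1} 2^{j\alpha p}\Vert \Delta_j f\Vert_{L^p}^p$, so the whole argument reduces to bounding each $\Vert \Delta_j f\Vert_{L^p}$ by a geometric mean of $\Vert \Delta_j f\Vert_{L^2}$ and $\Vert \Delta_j f\Vert_{L^\infty}$ with the correct exponents.

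The key step is the standard Lebesgue interpolation inequality $\Vert g\Vert_{L^p} \leq \Vert g\Vert_{L^2}^{2/p}\Vert g\Vert_{L^\infty}^{(p-2)/p}$ for $p\geq 2$, which one proves at once by writing $|g|^p = |g|^2 \cdot |g|^{p-2}$ and majorising the second factor pointwise by $\Vert g\Vert_{L^\infty}^{p-2}$ before integrating. Applied to $g=\Delta_j f$ and raised to the $p$-th power, this delivers $\Vert \Delta_j f\Vert_{L^p}^p \leq \Vert \Delta_j f\Vert_{L^2}^2 \, \Vert \Delta_j f\Vert_{L^\infty}^{p-2}$.

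The final step is to distribute the frequency weight $2^{j\alpha p} = 2^{2j\alpha}\cdot 2^{(p-2)j\alpha}$, so that
\[ 2^{j\alpha p}\Vert \Delta_j f\Vert_{L^p}^p \leq \bigl(2^{2j\alpha}\Vert \Delta_j f\Vert_{L^2}^2\bigr)\bigl(2^{j\alpha}\Vert \Delta_j f\Vert_{L^\infty}\bigr)^{p-2}. \]
One then pulls the supremum in $j$ of the $L^\infty$ factor out of the sum as $\Vert f\Vert_{\ml C^\alpha}^{p-2} = \Vert f\Vert_{B_{\infty,\infty}^\alpha}^{p-2}$, and recognises the remaining weighted $\ell^2$-type series as $\Vert f\Vert_{H^\alpha}^2$. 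Taking a $p$-th root yields the stated inequality $\Vert f\Vert_{W_p^\alpha}\leq \Vert f\Vert_{H^\alpha}^{2/p}\Vert f\Vert_{\ml C^\alpha}^{(p-2)/p}$, and in particular $f\in W_p^\alpha$.

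I do not anticipate any substantive obstacle; the only real design choice is to keep the $L^\infty$ factor outside the dyadic sum via a supremum rather than inside via a further H\"older step, which is what allows the residual $\ell^2$ sum to reconstitute precisely $\Vert f\Vert_{H^\alpha}^2$ without any logarithmic loss. A more abstract route using real interpolation between Besov spaces would also give the result, but it is much heavier than what this block-level argument requires.
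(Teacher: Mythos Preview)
Your proposal is correct and follows essentially the same route as the paper: block-wise Lebesgue interpolation $\Vert \Delta_j f\Vert_{L^p}\le \Vert \Delta_j f\Vert_{L^2}^{2/p}\Vert \Delta_j f\Vert_{L^\infty}^{(p-2)/p}$, then pulling out $\sup_j 2^{j\alpha}\Vert \Delta_j f\Vert_{L^\infty}$ to leave the $\ell^2$ sum equal to $\Vert f\Vert_{H^\alpha}^2$. Your write-up is in fact slightly more explicit than the paper's in justifying the $L^p$ interpolation step and in splitting the weight $2^{j\alpha p}$.
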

	\par Sometimes it is also necessary to use a modified version of the paraproduct. Such operators have been introduced in \cite{gubinelli2015paracontrolled,gubinelli2017kpz}, and are defined as follows:
	Let $\varphi\in C_c^\infty(\m R,\m R_+)$ be nonnegative with compact support contained in $\m R_+$ and with total mass 1, define $Q_i$ for all $i\geq1$ as follows
	\begin{align*}
		Q_i:C\ml C^\beta\to C\ml C^\beta,\quad Q_if(t)=\int_{\m R}2^{\gamma i}\varphi(2^{\gamma i}(t-s))f(s\vee0)\mathrm{d}s.
	\end{align*}
	We will often apply $Q_i$ and other operators on $C\ml C^\beta$ to functions $f\in C_T\ml C^\beta$ which we then simply extend from $[0, T]$ to R+ by considering $f (\cdot\wedge T )$. With the help of $ Q_i$, we define a modified para-product
	\begin{equation}\label{eq;para-product;modify}
		f\pprec g=\sum_{i}(Q_iS_{i-1}f)\Delta_ig.
	\end{equation}
	Similar to estimate for the paraproduct in Lemma \ref{le;est;paraproduct}, we also have estimates for this operator.
	\begin{lemma}\label{le;est;paraproduct;modify}
		For any $\beta\in \m R$ and $p\geq 2$, we have 
		\begin{align*}
			\Vert f\pprec g(t)\Vert_{W^\beta_p}\leq C\Vert f\Vert_{C_tL^\infty}\Vert  g\Vert_{W^\beta_p},
		\end{align*}
		for all $t>0$. And for $-1<\alpha<0$ furthermore
		\begin{align*}
			\Vert f\pprec g(t)\Vert_{W^{\alpha+\beta}_p}\leq C\Vert f\Vert_{C_t\ml C^\alpha}\Vert  g\Vert_{W^\beta_p}.
		\end{align*}
		\begin{proof}
			Noting that $\ml F(\ml Q_jS_{j-1}f(s)\Delta_jg(t))\in  2^j\tilde{\ml C}$, where $\tilde{\ml C}$ is some annuals, we are left with proving an appropriate estimate for $\Vert \ml Q_jS_{j-1}f(s)\Delta_jg(t)\Vert_{L^p}$. By H\"older inequality, and definition of $\varphi$, we have 
			\begin{align*}
				\Vert \int_0^t 2^{\gamma j}\varphi(2^{\gamma j}(t-s))S_{j-1}f(s)\mathrm{d}s \Delta_j g(t)\Vert_{L^p}\lesssim& \int_0^t 2^{\gamma j}\varphi(2^{\gamma j}(t-s))\Vert S_{j-1}f(s)\Vert_{L^\infty}\mathrm{d}s \Vert \Delta_j g(t)\Vert_{L^p}\\
				\lesssim&\int_{\m R} 2^{\gamma j}\varphi(2^{\gamma j}(t-s))\mathrm{d}s \Vert f\Vert_{C_tL^\infty}\Vert \Delta_j g(t)\Vert_{L^p}\\
				\lesssim&\Vert f\Vert_{C_tL^\infty}\Vert \Delta_j g(t)\Vert_{L^p}.
			\end{align*}
			Since $g(t)\in W_p^\beta$, by Lemma 2.23 in \cite{bahouriFourierAnalysisNonlinear2011}, we get the first estimate. The second estimate is similar, requiring only the observation that 
			\begin{equation}
				\sup_{j\geq -1}2^{j\alpha}\Vert S_{j-1}f(s)\Vert_{L^\infty}\leq C\Vert f(s)\Vert_{\ml C^\alpha},
			\end{equation}
			for $-1<\alpha<0$ by Proposition 2.79 in \cite{bahouriFourierAnalysisNonlinear2011}.
		\end{proof}
	\end{lemma}
	\par The remaining estimates will be presented in the Appendix \ref{sec;commutator est}. Finally, we introduce a class of temporal regularity spaces, which have been referenced in \cite{gubinelli2015paracontrolled,gubinelli2017kpz,hairer2013solving,catellier2018paracontrolled}. Define the such norm with time $T$ as follows. $C_TX=C([0,T],X)$ for the space of continuous maps from $[0,T]$ to $X$, equipped with the norm $\sup_{t\in[0,T]}\Vert \cdot\Vert_X$. For $\alpha\in(0,1)$, we also define $C_T^\alpha X$ as a space of $\alpha$-H\"older continuous functions from $[0,T]$ to $X$ equipped with the norm $\Vert f\Vert_{C_T^\alpha X}=\sup_{0\leq s<t\leq T}\frac{\Vert f(t)-f(s)\Vert_X}{\vert t-s\vert^\alpha}$. For our convenience, for $\alpha\geq0$ we have the following notation
	$$ \ml W_p^0(T)=C_TL^p,\quad\ml W_p^\alpha(T)=C_TB_{p,p}^\alpha\cap C^{\frac{\alpha}{\gamma}}_TL^p,\quad \ml W^\alpha(T)= \ml W_2^\alpha(T)\cap \ml W_\infty^\alpha(T).$$
	It's easy to check that $\ml W^\alpha(T)$ is a Banach algebra when $\alpha\geq 0$. For $\alpha<0$, we also give a definition $\ml W^{\alpha}_p(T)=C_TB_{p,p}^\alpha$ Next, we introduce some useful estimate
	\begin{lemma}\label{le;est;paraproduct;modifed}
		Let $\alpha\in\m R$, $\epsilon,T>0$, $\rho\in(0,\gamma)$ satisfy $\gamma\leq \rho+\epsilon$ and $\alpha+\rho<\gamma$. Let $f\in \ml W_\infty^\epsilon(T)$, $g(0)=0$, $g\in C_TW^{\alpha+\rho}(T)$ and $\ml Lg\in C_T W^\alpha$. Then
		\begin{align*}
			\Vert f\pprec g\Vert_{\ml W_\infty^{\alpha+\rho}(T)}\lesssim T^{1-\frac{\rho}{\gamma}}\Vert f\Vert_{\ml W_\infty^\epsilon(T)}(\Vert g\Vert_{C_T\ml C^{\alpha+\gamma-\epsilon}}+\Vert \ml Lg\Vert_{C_T\ml C^{\alpha}}),
		\end{align*} 
		and
		\begin{align*}
			\Vert f\pprec g\Vert_{\ml W_2^{\alpha+\rho}(T)}\lesssim T^{1-\frac{\rho}{\gamma}}\Vert f\Vert_{\ml W_\infty^\epsilon(T)}(\Vert g\Vert_{C_TH^{\alpha+\gamma-\epsilon}}+\Vert \ml Lg\Vert_{C_TH^{\alpha}}).
		\end{align*} 
		\begin{proof}
			We only prove the case of $\ml W_2^{\alpha+\rho}(T)$ and the $p=\infty$ is similar. By Lemma \ref{le;heat flow}, Lemma \ref{le;est;paraproduct;modify} and Lemma \ref{le;commutator;modfied paraproduct, L}, we have
			\begin{align*}
				\Vert f\pprec g\Vert_{\ml W_2^{\alpha+\rho}(T)}\lesssim& \Vert f\pprec g(0)\Vert_{\ml W_2^{\alpha+\rho}(T)}+T^{1-\frac{\rho}{\gamma}}\Vert \ml L(f\pprec g)\Vert_{C_TH^{\alpha}}\\
				\leq&T^{1-\frac{\rho}{\gamma}} \Vert \ml L(f\pprec g)-f\pprec \ml Lg\Vert_{C_TH^{\alpha}}+\Vert f\pprec \ml Lg\Vert_{C_TH^{\alpha}}\\
				\leq &T^{1-\frac{\rho}{\gamma}} \Vert f\Vert_{C_T\ml C^{\epsilon}}\Vert g\Vert_{C_TH^{\alpha+\gamma-\epsilon}}+\Vert f\Vert_{C_T\ml C^\epsilon}\Vert \ml Lg\Vert_{C_TH^{\alpha}}\\
				\leq &T^{1-\frac{\rho}{\gamma}}\Vert f\Vert_{C_T\ml C^{\epsilon}}(\Vert g\Vert_{C_TH^{\alpha+\gamma-\epsilon}}+\Vert \ml Lg\Vert_{C_TH^{\alpha}})\\
				\leq &T^{1-\frac{\rho}{\gamma}}\Vert f\Vert_{C_T\ml C^{\epsilon}}(\Vert g\Vert_{C_TH^{\alpha+\rho}}+\Vert \ml Lg\Vert_{C_TH^{\alpha}}).
			\end{align*}
			Since $\gamma-\epsilon\leq\rho$, then we prove the lemma.
		\end{proof}
	\end{lemma}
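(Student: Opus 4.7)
The plan is to exploit the parabolic Schauder-type estimate together with a commutator trick, which is the standard paradigm whenever one wants to upgrade the regularity of a modified paraproduct in a parabolic space $\ml W_p^{\alpha+\rho}(T)$. Since $\ml W_p^{\alpha+\rho}(T)$ combines spatial $B^{\alpha+\rho}_{p,p}$ regularity with temporal $C^{(\alpha+\rho)/\gamma}L^p$ regularity, the natural move is to reduce everything to an estimate for $\ml L(f\pprec g)$ in $C_T B^{\alpha}_{p,p}$, which costs exactly $\gamma$ derivatives and produces the time factor $T^{1-\rho/\gamma}$.

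First I would apply Lemma \ref{le;heat flow} to the function $f\pprec g$, which yields
\[
\Vert f\pprec g\Vert_{\ml W_p^{\alpha+\rho}(T)}\lesssim \Vert (f\pprec g)(0)\Vert_{B^{\alpha+\rho}_{p,p}}+T^{1-\rho/\gamma}\Vert \ml L(f\pprec g)\Vert_{C_T B^{\alpha}_{p,p}}.
\]
The initial value term should either vanish or be harmless thanks to the truncation $s\wedge 0$ in the definition of $Q_i$ (the mollifier is designed so that $Q_i S_{i-1}f(0)$ is controlled by $f$ near time $0$ only). Then I would write the telescoping identity
\[
\ml L(f\pprec g)=\bigl(\ml L(f\pprec g)-f\pprec \ml Lg\bigr)+f\pprec \ml Lg,
\]
so as to separate the purely spatial modified paraproduct estimate from the commutator piece.

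For the second summand I invoke Lemma \ref{le;est;paraproduct;modify} (in the $\alpha<0$ or $\alpha\geq 0$ form, whichever applies) to bound $\Vert f\pprec \ml Lg\Vert_{C_T B^{\alpha}_{p,p}}$ by $\Vert f\Vert_{C_T\ml C^{\epsilon}}\Vert \ml Lg\Vert_{C_T B^{\alpha}_{p,p}}$; this produces the $\Vert \ml Lg\Vert$ term on the right-hand side. For the commutator I would use Lemma \ref{le;commutator;modfied paraproduct, L}, which is tailored to the fractional heat operator and precisely encodes the gain from the temporal mollification built into $Q_i$: the commutator costs $\gamma-\epsilon$ derivatives on $g$ rather than a full $\gamma$, and uses $\epsilon$ H\"older regularity of $f$. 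Finally, I would absorb $\gamma-\epsilon\leq \rho$ in the hypothesis to convert $\Vert g\Vert_{C_T H^{\alpha+\gamma-\epsilon}}$ into $\Vert g\Vert_{C_TH^{\alpha+\rho}}$ if needed, which matches the statement, and the $p=\infty$ case is entirely parallel with H\"older norms replacing Sobolev ones.

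The main obstacle I expect is the commutator step. Writing $\ml L(f\pprec g)-f\pprec \ml Lg$ out frequency-by-frequency, one gets two contributions: one where $\ml L=\partial_t-\Lambda^\gamma$ hits the time-mollified coefficient $Q_iS_{i-1}f$, and one where $\Lambda^\gamma$ fails to commute with the Littlewood--Paley cutoffs on $g$. The dyadic-scale matching between $Q_i$ (mollifying on time scale $2^{-\gamma i}$) and $\Delta_i g$ is exactly what makes the $\partial_t$-piece bounded with only an $\epsilon$-loss of spatial regularity on $f$; checking this requires a careful parabolic-scaling argument, but since we are allowed to cite Lemma \ref{le;commutator;modfied paraproduct, L} as a black box the proof here becomes essentially algebraic. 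The condition $\gamma\leq \rho+\epsilon$ is exactly what is needed to make the derivative bookkeeping close up after combining the two contributions.
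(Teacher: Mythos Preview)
Your proposal is correct and follows essentially the same route as the paper: apply the parabolic Schauder bound (Lemma~\ref{le;heat flow}/\ref{le;heat flow;time smoothing}) to reduce to estimating $\ml L(f\pprec g)$ in $C_TH^{\alpha}$, split via $\ml L(f\pprec g)=[\ml L(f\pprec g)-f\pprec\ml Lg]+f\pprec\ml Lg$, then invoke Lemma~\ref{le;commutator;modfied paraproduct, L} on the commutator and Lemma~\ref{le;est;paraproduct;modify} on the remaining paraproduct, finishing with $\gamma-\epsilon\le\rho$. The paper also silently drops the initial-data term $(f\pprec g)(0)$, exactly as you anticipated.
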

	\subsection{Stochastic tools}
	In this little section, we introduce some stochastic tools we need in this paper. The first tool is the Kolmogorov's continuity criterion.
	\begin{lemma}[\cite{le2016brownian}]\label{le;Kolmogorov}
		Let $X=(X(t))_{t\in[0,T]}$ be a random process and take values in a complete metric space $(E,d)$. If there exist $p,\epsilon,C>0$ such that, for every $s,t\in[0,T]$, 
		\begin{equation}
			\m E[d(X(s),X(t))^p]\leq C\vert t-s\vert^{1+\epsilon}.
		\end{equation}
		Then, there is a modification $\tilde X$ of $X$ whose sample paths are H\"older continuous with exponent $\alpha$ for every $\alpha\in (0,\frac{\epsilon}{p})$. This means that, for every $\omega\in\Omega$ and every $\alpha\in(0,\frac{\epsilon}{p})$, there exists a finite constant $C_\alpha(\omega)$ such that, for every $s,t\in I$, 
		\begin{align*}
			d(\tilde X(s,\omega),\tilde X(t,\omega))\leq C_\alpha(\omega)\vert t-s\vert^\alpha,
		\end{align*}
		where $\tilde X(t)$ is a modification of $X(t)$ with continuous sample paths.
	\end{lemma}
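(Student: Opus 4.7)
The plan is to follow the classical dyadic-approximation scheme: control increments over a countable dense set of times using Markov's inequality and Borel--Cantelli, deduce Hölder continuity on that set via a chain/telescoping argument, and then extend by uniform continuity to obtain the modification $\tilde X$ on all of $[0,T]$. Throughout, I will assume $(E,d)$ is complete so that Cauchy sequences have limits.

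First I would fix $\alpha\in(0,\epsilon/p)$ and introduce the dyadic grids
$$D_n=\{kT2^{-n}:0\le k\le 2^n\},\qquad D=\bigcup_{n\ge 0}D_n.$$
For each $n$, Markov's inequality applied to the moment bound gives
$$\m P\bigl(d(X(kT2^{-n}),X((k+1)T2^{-n}))>2^{-\alpha n}\bigr)\le 2^{\alpha n p}\,C(T2^{-n})^{1+\epsilon},$$
so summing over the $2^n$ consecutive dyadic pairs and using $\alpha p<\epsilon$ yields a geometrically summable bound in $n$. By Borel--Cantelli, there exists an a.s.-finite random $N(\omega)$ such that for all $n\ge N(\omega)$ every adjacent dyadic increment at level $n$ is at most $2^{-\alpha n}$.

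Second, for any two dyadic points $s,t\in D$ with $|t-s|<2^{-N(\omega)}T$, I would pick the unique $n$ with $T2^{-n-1}\le|t-s|<T2^{-n}$ and connect $s$ to $t$ through a chain of adjacent dyadic points at levels $\ge n$, each level contributing at most two segments. Summing the geometric series $\sum_{m\ge n}2^{-\alpha m}$ and absorbing the finitely many ``bad'' low-level scales into the random constant yields
$$d(X(s,\omega),X(t,\omega))\le C_\alpha(\omega)|t-s|^\alpha\qquad\text{for all }s,t\in D.$$

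Third, since $D$ is dense in $[0,T]$ and the above estimate makes $X|_D$ uniformly continuous on each $\omega$-set of full measure, I define $\tilde X(t,\omega)$ for $t\in[0,T]\setminus D$ as the limit of $X(t_k,\omega)$ along any $D$-valued sequence $t_k\to t$; completeness of $E$ guarantees existence, and the Hölder bound above guarantees it is independent of the chosen sequence. On the null set where the construction fails, set $\tilde X\equiv x_0$ for some fixed $x_0\in E$. The inherited Hölder estimate then holds for all $s,t\in[0,T]$. To verify $\tilde X$ is a modification, the moment bound gives $X(t_k)\to X(t)$ in probability for any $t_k\to t$, while $X(t_k)\to\tilde X(t)$ a.s. along $t_k\in D$, so the two limits coincide a.s. and $\tilde X(t)=X(t)$ a.s.\ pointwise in $t$.

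The main obstacle is really just the chain argument in the second step: one must carefully track that a path between two dyadics of scale $\sim 2^{-n}$ can be built from edges at levels $\ge n$, so that the geometric series converges and produces the exponent $\alpha$ rather than a worse one. The Borel--Cantelli input and the final passage from $D$ to $[0,T]$ are then routine, and no hypothesis beyond the stated moment bound and completeness of $(E,d)$ is needed.
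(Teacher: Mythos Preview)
Your proof is correct and is the standard dyadic-chaining argument for Kolmogorov's continuity criterion. Note, however, that the paper does not actually prove this lemma: it is stated with a citation to \cite{le2016brownian} and used as a black box, so there is no ``paper's own proof'' to compare against. What you have written is essentially the proof one finds in that reference.
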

	Now we give a basic analysis for white noise $\xi$. For the linear evolution $X(t):=\int_0^t e^{(t-s)\Lambda^\gamma}\xi(s)\mathrm{d}s$, we have the following lemma.
	\begin{lemma}\label{le;regularity}
		The spatial Fourier transform $X_k=\ml F_xX(k)$ is a Gaussian process with zero-mean and satisfy
		\begin{align*}
			\m E[X_k(t)X_{k'}(t')]=(e^{-\vert t-t'\vert\vert k\vert^\gamma}-e^{-( t+t')\vert k\vert^\gamma})\frac{\vert \mathbb{T}\vert}{2\vert k\vert^\gamma}\textbf{I}_{k+k'=0}(k,k'),\quad \m E[X_0(t)X_{k}(t')]= (t\wedge t')\vert \m T\vert\textbf{I}_{k=0}(k).
		\end{align*}
		where $\vert \mathbb{T}\vert=\int_{\mathbb{T}}1\mathrm{d}x$ and $\textbf{I}_{k+k'=0}(k,k')$ is the indicator function. Moreover, for any $ T>0$ and $\beta<\frac{\gamma(1-2\kappa)}{2}-\frac{1}{2}$, we have $X(t)\in C_T^\kappa W^\beta$ for $\kappa<\frac{1}{2}$.
		\begin{proof}
			It's easy to check the first result by noting that
			\begin{equation}\label{eq;covariance;xi}
				\m E[\ml F\xi(t,k){\ml F\xi(s,k')}]=\delta(t-s)\textbf{I}_{k+k'=0}(k,k').
			\end{equation}
			Considering the second result, we can rewrite $X(t)$ as 
			\begin{align*}
				X(t)=\sum_{k\in \mathbb{Z}}X_k(t)e^{ikx}.
			\end{align*}
			For $j\geq0(k\neq0)$, we calculate $\m E\vert \Delta_j (X(t)-X(t'))\vert^2$ as follows
			\begin{align*}
				\m E\vert \Delta_j (X(t)-X(t'))\vert^2=&\m E\sum_{k,k'\in \m Z}\rho_j(k)\rho_j(k')[X_k(t)-X_k(t')][X_{-k'}(t)-X_{-k'}(t')]e^{i(k-k')x}\\
				\lesssim& 2^{j(1-\gamma(1-\delta))}\vert t-t'\vert^\delta,
			\end{align*}
			for some $\delta\in[0,1]$. The case $j=-1$ can be treated using essentially the same arguments, except that then we need to distinguish the cases $k = 0$ and $k \neq 0$. By Gaussian hypercontractivity \cite{janson1997gaussian}, we have
			\begin{align*}
				\m E\vert \Delta_j (X(t,x_i)-X(t',x_i))\vert^{2p}\lesssim(\m E\vert \Delta_j (X(t,x_i)-X(t',x_i))\vert^{2})^p.
			\end{align*}
			It implies that
			\begin{align*}
				\m E\Vert \Delta_j (X(t)-X(t'))\Vert_{L^{2}}^{2p}=&\m E(\int_{\m T}\vert \Delta_j (X(t)-X(t'))\vert^{2}\mathrm{d}x)^p\\
				\lesssim&\int_{\m T^{\otimes p}}\m E\prod_{i=1}^{p}\vert \Delta_j (X(t,x_i)-X(t',x_i))\vert^{2}\mathrm{d}x_i\\
				\lesssim &\int_{\m T^{\otimes p}}\prod_{i=1}^{p}(\m E\vert \Delta_j (X(t,x_i)-X(t',x_i))\vert^{2p})^\frac{1}{p}\mathrm{d}x_i\\
				\lesssim &\int_{\m T^{\otimes p}}\prod_{i=1}^{p}(\m E\vert \Delta_j (X(t,x_i)-X(t',x_i))\vert^{2})\mathrm{d}x_i\\
				\lesssim & 2^{pj(1-\gamma(1-\delta))}\vert t-t'\vert^{\delta p}.
			\end{align*}
			Then we can estimate $\m E\Vert X(t)-X(t')\Vert_{H^\beta}^{2p}$ as 
			\begin{align*}
				\m E\Vert X(t)-X(t')\Vert_{B_{2,2}^\beta}^{2p}\lesssim&(\sum_{j\geq -1}2^{2j\beta }\m (E\Vert \Delta_j (X(t)-X(t'))\Vert_{L^{2}}^{2p})^{\frac{1}{p}})^p\\
				\leq&(\sum_{j\geq -1}2^{j(2\beta+1-\gamma(1-\delta))})^p\vert t-t'\vert^{\delta p}\leq C\vert t-s\vert^{\delta p},
			\end{align*}
			for $\beta<\frac{\gamma(1-\delta)-1}{2}$. By Lemma \ref{le;Kolmogorov}, we have $X(t)\in C^{\frac{\delta}{2}-\frac{1}{2p}}_TH^{\frac{\gamma(1-\delta)-1}{2}}$. Choose $p$ large enough, we can prove the case of $H^s$. For H\"older space, we can also have the results by similar argument
			\begin{align*}
				\m E\Vert X(t)-X(t')\Vert_{B_{2p,2p}^\beta}^{2p}=&\sum_{j\geq -1}2^{2j\beta p}\m E\Vert \Delta_j (X(t)-X(t'))\Vert_{L^{2p}}^{2p}\\
				\lesssim&\sum_{j\geq -1}2^{2j\beta p}\Vert \m E\vert \Delta_j(X(t)-X(t'))\vert^2\Vert_{L^p}^p\\
				\lesssim&\sum_{j\geq -1}2^{jp(2\beta+1-\gamma(1-\delta))}\vert t-t'\vert^{\delta p}\leq C\vert t-s\vert^{\delta p}.
			\end{align*}
			Let $p$ be large enough and by embedding for Besov space, we get $X(t)\in C^{\frac{\delta}{2}-\frac{1}{2p}}_T\ml C^{\frac{\gamma(1-\delta)-1}{2}}$. Choosing $p$ large enough again, we can prove the case of $\ml C^s$, which we finish our proof.
		\end{proof}
	\end{lemma}
	\begin{remark}\label{rem;regularity}
		By Lemma \ref{le;regularity}, we have $X(t)\in C_T^{\kappa}W^{\beta}$ for $\frac{\beta}{\gamma}<\frac{1}{2}-\frac{1}{\gamma}-\kappa$. Let $\beta=0$, then $\kappa$ can choose to be $\frac{1}{2}-\frac{1}{2\gamma}-\epsilon$ for any small $\epsilon>0$. Then we have $X(t)\in \ml W^\alpha(T)$ a.s for any $T\geq0$ and $\alpha<\frac{\gamma}{2}-\frac{1}{2}$ if we fix $\epsilon$ small enough. 
	\end{remark}
	\subsection{Iteration argument}
	Next we introduce a generalized contraction mapping theorem which compared with Lemma 5.5($ L=0$) in \cite{bahouriFourierAnalysisNonlinear2011}.
	\begin{lemma}\label{le;Picard th}
		Let $E$ be a Banach space, and $\ml B$ a continuous bilinear map from $E\times E$ to $E$, and $r$ a positive real number such that
		$$r<\frac{1}{8\Vert \ml B\Vert}\quad with \quad\Vert \ml B\Vert_E=\sup_{\Vert u\Vert_E,\Vert v\Vert_E\leq1}\Vert \ml B(u,v)\Vert_E.$$ 
		Let $ L$ is a linear map $E$ to $E$ and satisfied 
		$$\sup_{v\in E}\frac{\Vert  L(v)\Vert_E}{\Vert v\Vert_E}\leq \frac{1}{4}.$$
		Then for any $a$ in a ball $B(0,r)\subset E$, there exists a unique $\tilde x$ in $\ml B(0,2r)$ such that 
		$$\tilde x=a+ L(\tilde x)+\ml B(\tilde x,\tilde x).$$
		\begin{proof}
			The proof of first result can refer to \cite{bahouriFourierAnalysisNonlinear2011}. For second, we involved an application of iterative scheme defined by 
			$$\tilde x_0=a\quad \tilde x_{n+1}=a+ L(\tilde x_n)+\ml B(\tilde x_n,\tilde x_n).$$
			By induction argument, we show that $\Vert\tilde  x_{n+1}\Vert\leq 2r$ when $\Vert \tilde x_n\Vert\leq 2r$. In fact, by our assumption, we get
			\begin{align*}
				\Vert\tilde x_{n+1}\Vert_E\leq r+\frac{r}{4}+4\Vert \ml B\Vert r^2\leq 2r.
			\end{align*}
			Thus, $(\tilde x_n)_{n\in \m N}$ remains in $B(0,2r)$. Moreover, 
			\begin{align*}
				\Vert \tilde x_{n+1}-\tilde x_n\Vert_E\leq& \Vert L(\tilde x_n-\tilde x_{n-1})\Vert_E +4r\Vert \ml B\Vert\Vert \tilde x_n-\tilde x_{n-1}\Vert_E\\
				\leq&\frac{3}{4}\Vert \tilde x_n-\tilde x_{n-1}\Vert_E.
			\end{align*}
			Then we get the limit of $(\tilde x_n)$ is a unique fixed point of $\tilde x\mapsto a +L(\tilde x)+\ml B(\tilde x,\tilde x)$. This means that the lemma is proved.
		\end{proof}
	\end{lemma}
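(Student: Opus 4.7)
The plan is to mimic the classical Picard/Banach fixed-point argument for Bony-style bilinear equations, adapted to accommodate the extra linear term $L$. I would set up the iteration
$$\tilde x_0 = a, \qquad \tilde x_{n+1} = a + L(\tilde x_n) + \mathcal{B}(\tilde x_n, \tilde x_n),$$
and show successively that (i) the sequence stays in the closed ball $\overline{B(0, 2r)}$, (ii) it is Cauchy, and (iii) its limit is the unique fixed point in that ball.

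The first step is a straightforward induction. Assuming $\|\tilde x_n\|_E \leq 2r$, I bound
$$\|\tilde x_{n+1}\|_E \leq \|a\|_E + \|L\|_{E\to E}\,\|\tilde x_n\|_E + \|\mathcal{B}\|\,\|\tilde x_n\|_E^2 \leq r + \tfrac{1}{4}(2r) + 4\|\mathcal{B}\| r^2,$$
and since $r < \frac{1}{8\|\mathcal{B}\|}$ forces $4\|\mathcal{B}\| r^2 < \frac{r}{2}$, the right-hand side is at most $2r$. For the second step, I would write the difference $\tilde x_{n+1} - \tilde x_n = L(\tilde x_n - \tilde x_{n-1}) + \mathcal{B}(\tilde x_n, \tilde x_n) - \mathcal{B}(\tilde x_{n-1}, \tilde x_{n-1})$, split the last expression via bilinearity as $\mathcal{B}(\tilde x_n - \tilde x_{n-1}, \tilde x_n) + \mathcal{B}(\tilde x_{n-1}, \tilde x_n - \tilde x_{n-1})$, and use the invariance of the ball together with the two hypotheses to obtain
$$\|\tilde x_{n+1} - \tilde x_n\|_E \leq \Bigl(\tfrac{1}{4} + 4r\|\mathcal{B}\|\Bigr)\|\tilde x_n - \tilde x_{n-1}\|_E,$$
where the Lipschitz constant is strictly less than $\frac{3}{4}$.

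Completeness of $E$ then yields a limit $\tilde x \in \overline{B(0,2r)}$; continuity of $L$ and $\mathcal{B}$ lets me pass to the limit in the recursion to obtain $\tilde x = a + L(\tilde x) + \mathcal{B}(\tilde x, \tilde x)$. Uniqueness follows by the same contraction estimate applied to the difference of two putative fixed points inside $B(0,2r)$: one finds $\|\tilde x - \tilde y\|_E \leq (\tfrac{1}{4}+4r\|\mathcal{B}\|)\|\tilde x - \tilde y\|_E$, forcing equality.

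There is no serious obstacle here — the only delicate point is the bookkeeping of constants. The hypotheses $\|L\|_{E\to E} \leq \frac{1}{4}$ and $r < \frac{1}{8\|\mathcal{B}\|}$ are tuned exactly so that the contribution of $L$ and the bilinear term together leave room under the threshold $2r$ for invariance and under the threshold $1$ for contraction; if one tried to relax either hypothesis (for instance replacing $\frac{1}{4}$ by $\frac{1}{2}$), the induction step $r + \frac{1}{2}(2r) + 4\|\mathcal{B}\|r^2 \leq 2r$ would require $r = 0$ and the scheme would fail. Thus the real content of the lemma lies in the compatibility of the two smallness conditions rather than in any technical difficulty in the iteration.
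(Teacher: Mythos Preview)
Your proposal is correct and follows essentially the same Picard iteration as the paper's own proof: define $\tilde x_{n+1}=a+L(\tilde x_n)+\mathcal B(\tilde x_n,\tilde x_n)$, verify invariance of $\overline{B(0,2r)}$ by the bound $r+\tfrac12 r+4\Vert\mathcal B\Vert r^2\le 2r$, and deduce contraction with constant $\tfrac14+4r\Vert\mathcal B\Vert<\tfrac34$. Your write-up is in fact slightly more careful than the paper's (you spell out the bilinear splitting and the uniqueness argument explicitly), but the approach is identical.
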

	We also provide version for studying specific types of coupled equations. Let $f((x,y))$ be a linear functional on $E\times E$ which means
	\begin{equation}\label{eq;linear;f}
		\begin{aligned}
			&f(c(x,y))=cf((x,y)),\\
			&f(x_1+x_2,y_1+y_2)=f(x_1,y_1)+f(x_2,y_2).
		\end{aligned}
	\end{equation} 
	Assume that $f(x,y)$ satisfies some control property from $\dot E\times E$ to $X$, such as 
	\begin{equation}\label{eq;controlled;paracontrolled ansatz}
		\Vert f(x,y)\Vert_X\leq \Vert x\Vert_{\dot E}+c \Vert y\Vert_E,
	\end{equation} we have the following lemma
	\begin{lemma}\label{le;Picard th;specific}
		Let $\dot E, E,X$ be Banach spaces satisfying $ \dot E\subset X \subset E$, $
		\ml E:=\dot E\times E$ be a Banach space with norm 
		$$\Vert (x,y)\Vert_{\ml E}=2\Vert x\Vert_{\dot E}+\Vert y\Vert_E.$$
		Moreover let $\ml B$ be a continuous bilinear map from $X\times X$ to $\dot E$, and $r$ a positive real number such that
		$$\Vert \ml B(u,v)\Vert_{\dot  E}\leq \frac{1}{16r}\Vert u\Vert_X \Vert v\Vert_X,$$
		Let $\ml G(x)$ and $g(y)$ be linear map from $E$ to $\dot E$ and satisfying 
		$$\sup_{v\in E}(\frac{\Vert \ml G(v)\Vert_{\dot E}}{\Vert v\Vert_E},\frac{\Vert  g(v)\Vert_{\dot E}}{\Vert v\Vert_E})\leq \frac{1}{16},\quad \frac{\Vert h(v)\Vert_{\dot E}}{\Vert v\Vert_{\dot E}}\leq \frac{1}{16}.$$
		Let $f$ satisfy \eqref{eq;linear;f} and \eqref{eq;controlled;paracontrolled ansatz} for $X$, then there exist $\tilde c>0$, such that for any $(a,b)\in B(0,\frac{r}{4})\subset \ml E$ and $0<c<\tilde c$, there exists unique $(x,y)\in B(0,4r)\subset \ml E$ such that 
		\begin{equation}\label{eq;key mapping}
			y=b+f(x,y),\quad  x=a+g(y)+h(x)+\ml B(f(x,y),f(x,y))+\ml G(f(x,y)).
		\end{equation}
		\begin{proof}
			We involved application of iterative scheme defined by
			$$x_0=a,\quad y_0=b,\quad y_{n+1}=b+f(x_n,y_n),\quad x_{n+1}=a+g(y_n)+h(x_n)+\ml G(f(x_n,y_n))+\ml B(f(x_n,y_n),f(x_n,y_n)).$$
			We first prove that $(x_{n+1},y_{n+1})\in B(0,4r)\subset \ml E$ when $(x_{n},y_{n})\in B(0,4r)\subset \ml E$. Observing that $\Vert x_n\Vert\leq 2r$, $\Vert y_n\Vert\leq 4r$ and 
			\begin{align*}
				\Vert f(x_{n},y_{n})\Vert_E\leq \Vert f(x_{n},y_{n})\Vert_X\leq \Vert x_n\Vert_{\dot E}+c\Vert y_n\Vert_E\leq 2r+4cr,
			\end{align*}
			by a directly computation, we have 
			\begin{align*}
				\Vert (x_{n+1},y_{n+1})\Vert_{\ml E}\leq& \Vert b\Vert_{E}+\Vert  f(x_n,y_n)\Vert_{E}+2\Vert a\Vert_{\dot E}+2\Vert g(y_n)\Vert_{\dot E}+2\Vert h(x_n)\Vert_{\dot E}\\
				&+2\Vert \ml G(f(x_n,y_n))\Vert_{\dot E}+2\Vert \ml B(f(x_n,y_n),f(x_n,y_n))\Vert_{\dot E}\\
				\leq &2\Vert a\Vert_{\dot E}+\Vert b\Vert_{E}+\frac{1}{8}\Vert y_n\Vert_{E}+\frac{1}{8}\Vert x_n\Vert_{\dot E}+\frac{9}{8}\Vert f(x_n,y_n)\Vert_{E}+\frac{1}{8r}(\Vert f(x_n,y_n)\Vert_X^2)\\
				\leq &2\Vert a\Vert_{\dot E}+\Vert b\Vert_{E}+\frac{1}{8}\Vert y_n\Vert_{E}+\frac{1}{8}\Vert x_n\Vert_{\dot E}+\frac{9}{8}(\Vert x_n\Vert_E+c\Vert y_n\Vert_E)+\frac{1}{8r}(\Vert x_n\Vert_E+c\Vert y_n\Vert_E)^2\\
				\leq& \frac{r}{4}+\frac{r}{2}+\frac{r}{4}+\frac{9}{8}(2r+4cr)+\frac{1}{8r}(4r^2+16cr^2+16c^2r^2)\\
				\leq&\frac{15}{4}r+(\frac{13}{2}c+2c^2)r<4r,
			\end{align*}
			for $0<c<\tilde c=\frac{1}{30}$. Noting that $(x_0,y_0)\in B(0,\frac{r}{4})\subset B(0,4r)$, we obtain that $(x_n,y_n)\in B(0,4r)$ for all $n\in \m N$. Setting  $f_{n,m}=f(x_{n},y_{n})-f(x_{m},y_{m})\in E$, for all $n,m\in\m N$, we have
			\begin{align*}
				\Vert f_{n,m}\Vert_X\leq \Vert x_n-x_m\Vert_{\dot E}+c \Vert y_n-y_m\Vert_{E},\quad \Vert f(x_n,y_n)\Vert_X\leq \Vert x_n\Vert_{\dot E}+c\Vert y_n\Vert_E\leq 2r+4cr
			\end{align*}
			To prove that $\{(x_n,y_n)\}_{n\in \m N}$ is a Cauchy sequence, we estimate that
			\begin{align*}
				\Vert (x_{n+1},y_{n+1})-(x_{n},y_{n})\Vert_{\ml E}\leq&2\Vert x_{n+1}-x_n\Vert_{\dot E}+\Vert y_{n+1}-y_n\Vert_{E}\\
				\leq &2\Vert g(y_n-y_{n-1})\Vert_{\dot E}+2\Vert h(x_n-x_{n-1})\Vert_{\dot E} +2\Vert \ml G(f_{n,n-1})\Vert_{\dot E}\\
				&+2\Vert \ml B(f_{n,n-1},f(x_{n},y_{n}))+\ml B(f(x_{n},y_{n}),f_{n,n-1})\Vert_{\dot E}+ \Vert f_{n,n-1}\Vert_E\\
				\leq & \frac{1}{8}\Vert y_n-y_{n-1}\Vert_{E}+\frac{1}{8}\Vert x_n-x_{n-1}\Vert_{\dot E}+\frac{1}{8r}(\Vert f_{n,n-1}\Vert_X\Vert f(x_n,y_n)\Vert_X)\\
				&+\frac{1}{8r}(\Vert f_{n,n-1}\Vert_X\Vert f(x_{n-1},y_{n-1})\Vert_X)+\frac{9}{8}\Vert f_{n,n-1}\Vert_E\\
				\leq& (\frac{1}{8}+\frac{2cr+4c^2r}{4r}+\frac{9c}{8})\Vert y_n-y_{n-1}\Vert_{E}+(\frac{1}{8}+\frac{2r+4cr}{4r}+\frac{9}{8})\Vert x_n-x_{n-1}\Vert_{\dot E}\\
				<&(\frac{7}{8}+\frac{13c}{8})\Vert (x_{n},y_{n})-(x_{n-1},y_{n-1})\Vert_{\ml E}
				<\frac{15}{16}\Vert (x_{n},y_{n})-(x_{n-1},y_{n-1})\Vert_{\ml E}.
			\end{align*}
			Then we get the limit of $(x_{n},y_{n})$ is a unique fixed point of mapping \eqref{eq;key mapping} in Banach space $\ml E$, which proves the lemma.
		\end{proof}
	\end{lemma}
	\section{Result in the range of $\gamma\in(\frac{4}{3},2]$}\label{sec;regular case}
	This section treats the highly dissipative regime, in which the linear component
	$X(t)$ enjoys improved regularity. Specifically, for $\gamma\in(\frac{4}{3},2]$, Lemma \ref{le;regularity} yields $X\in C_TH^{\frac{1}{6}+\delta}$ for a sufficiently small $\delta$. This regime is subcritical because the critical regularity for $X$ is
	$H^{\frac{3}{2}-\gamma}.$
	\subsection{Bi-linear estimate}
	First we build a standard bilinearity estimate. Defining $B(f,g)$ as follows
	\begin{equation}\label{eq;bilinear}
		B(f,g)(t)=\int_0^te^{(t-s)\Lambda^\gamma}\p_x(f(s)g(s))\mathrm{d}s,
	\end{equation}
	then we have the following estimate.
	\begin{lemma}\label{le;est;bilinear;general}
		Fixing $E_T:=L^\infty_TH^s$, with $0\leq \rho<\gamma-1$, $s=\sigma+\rho$, then we have
		\begin{equation}
			\Vert B(f,g)\Vert_{E_T}\leq C T^{\frac{\gamma-1-\rho}{\gamma}}(1+T^\frac{\rho}{\gamma})\Vert fg\Vert_{C_TH^\sigma}.
		\end{equation}
		\begin{proof}
			By \eqref{eq;bilinear} and Parseval formula, we have 
			\begin{align*}
				\Vert B(f,g)(t)\Vert_{H^s}\lesssim&\int_0^t\Vert e^{-(t-\tau)\Lambda^\gamma}(1-\p_x^2)^\frac{\rho}{2}\p_x(1-\p_x^2)^\frac{\sigma}{2}(fg)\Vert_{L^2}\mathrm{d}\tau\\
				\lesssim& \int_0^t\Vert e^{-(t-\tau)\vert k\vert^\gamma}(1+\vert k\vert^2)^\frac{\rho}{2}\p_x(1+\vert k\vert^2)^\frac{\sigma}{2}(fg)\Vert_{L^2}\mathrm{d}\tau
			\end{align*}
			Divide $k$ in two parts: $\vert k\vert\lesssim1$ and $\vert k\vert\gtrsim 1$. For low frequencies $\vert k\vert\lesssim 1$, we have
			$$\vert e^{-(t-\tau)\vert k\vert^\gamma}(1+k^2)^\frac{\rho}{2}k\vert\lesssim \vert e^{-(t-\tau)\vert k\vert^\gamma}\vert k\vert\vert.$$ 
			For high frequencies $\vert k\vert\gtrsim 1$, we have
			$$\vert e^{-(t-\tau)\vert k\vert^\gamma}(1+k^2)^\frac{\rho}{2}k\vert\lesssim \vert e^{-(t-\tau)\vert k\vert^\gamma}\vert k\vert^{\rho+1}\vert.$$
			Then for $t\in[0,T]$ we obtain that
			\begin{align*}
				\Vert B(f,g)(t)\Vert_{H^s}\lesssim\int_0^t\max(\frac{1}{(t-\tau)^\frac{1}{\gamma}},\frac{1}{(t-\tau)^\frac{1+\rho}{\gamma}})\Vert fg\Vert_{H^\sigma}\mathrm{d}\tau\lesssim T^{\frac{\gamma-1-\rho}{\gamma}}(1+T^\frac{\rho}{\gamma})\Vert fg\Vert_{C_TH^\sigma},
			\end{align*}
			which finish our proof.
		\end{proof}
	\end{lemma}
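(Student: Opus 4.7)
The plan is to work entirely in Fourier space and obtain a pointwise bound on the multiplier inside the time integral. First I would apply Minkowski's inequality to pull the $H^s$-norm through the time integral, reducing matters to estimating, for each $\tau\in[0,t]$,
\[
\Vert e^{(t-\tau)\Lambda^\gamma}(1-\p_x^2)^{\rho/2}\p_x\,(fg)(\tau)\Vert_{L^2},
\]
after absorbing the remaining $(1-\p_x^2)^{\sigma/2}$ factor into the $H^\sigma$-norm of $fg$ on the right-hand side. This recasts everything as an $L^2$ Fourier-multiplier problem with symbol $m_\tau(\xi)=e^{-(t-\tau)|\xi|^\gamma}(1+\xi^2)^{\rho/2}\xi$.

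Next I would split the frequency domain into $|\xi|\lesssim 1$ and $|\xi|\gtrsim 1$. On low frequencies the weight $(1+\xi^2)^{\rho/2}$ is bounded and $|m_\tau(\xi)|\lesssim e^{-(t-\tau)|\xi|^\gamma}|\xi|$, giving the standard semigroup bound $\sup_\xi |m_\tau(\xi)|\lesssim (t-\tau)^{-1/\gamma}$. On high frequencies $(1+\xi^2)^{\rho/2}\sim |\xi|^\rho$, so $|m_\tau(\xi)|\lesssim e^{-(t-\tau)|\xi|^\gamma}|\xi|^{1+\rho}$ and $\sup_\xi |m_\tau(\xi)|\lesssim (t-\tau)^{-(1+\rho)/\gamma}$. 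Both are integrable in $\tau$ on $[0,t]$ precisely because $\rho<\gamma-1$, i.e.\ $(1+\rho)/\gamma<1$, which is exactly the hypothesis. Applying $L^2\to L^2$ multiplier bounds (a pointwise supremum of the symbol suffices here) yields
\[
\Vert e^{(t-\tau)\Lambda^\gamma}(1-\p_x^2)^{\rho/2}\p_x(fg)(\tau)\Vert_{L^2}\lesssim \max\bigl((t-\tau)^{-1/\gamma},(t-\tau)^{-(1+\rho)/\gamma}\bigr)\Vert (fg)(\tau)\Vert_{H^\sigma}.
\]

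Finally, integrating in $\tau$ from $0$ to $t$ and taking $\sup_{t\le T}$ produces a time factor of order $T^{(\gamma-1)/\gamma}+T^{(\gamma-1-\rho)/\gamma}$, which I would rewrite as $T^{(\gamma-1-\rho)/\gamma}(1+T^{\rho/\gamma})$ to match the claimed prefactor. Bounding $\Vert (fg)(\tau)\Vert_{H^\sigma}\le \sup_{\tau\le T}\Vert fg\Vert_{H^\sigma}$ then closes the estimate. The only real obstacle is the bookkeeping around the frequency split near $|\xi|\sim 1$; since the endpoint $\rho=\gamma-1$ is excluded by assumption, no logarithmic boundary issue arises and the argument is an essentially direct application of the fractional heat semigroup estimate.
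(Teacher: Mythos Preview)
Your proposal is correct and follows essentially the same approach as the paper: Minkowski in time, the Fourier multiplier $e^{-(t-\tau)|\xi|^\gamma}(1+\xi^2)^{\rho/2}\xi$, the low/high frequency split at $|\xi|\sim 1$ yielding the two bounds $(t-\tau)^{-1/\gamma}$ and $(t-\tau)^{-(1+\rho)/\gamma}$, and integration in $\tau$ under the hypothesis $\rho<\gamma-1$. Your write-up is in fact slightly more explicit than the paper's about the role of the $\sup_\xi$ multiplier bound and the integrability condition.
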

	\begin{remark}\label{rmk;no endpoint}
		The endpoint case $\rho=\gamma-1$  requires a more refined analysis, typically involving frequencies localization in Besov spaces $B_{2,\infty}^s$.  Such endpoint issues are not addressed in the present work. Indeed, for any $\alpha<\frac{\gamma}{2}-\frac{1}{2}$, one has $X\in \ml W^{\alpha+}(T)$ almost surely, which suffices for our purposes.
	\end{remark}
	By Moser-type estimate, it's not difficult to prove that 
	\begin{lemma}\label{le;est;bilinear}
		Let $\gamma\in(\frac{4}{3},2]$, $\alpha>\frac{1}{6}$, $\delta$ satisfy $0<2\delta\leq\alpha+\gamma-\frac{3}{2}$, then for $T\leq 1$ we have the following bounds
		\begin{itemize}
			\item{(1) For $f,g\in L^\infty_T H_x^{\frac{1}{2}+\delta}$, we have
				$$\Vert B(f,g)\Vert_{L^\infty_TH^{\frac{1}{2}+\delta}}\leq C_1T^{\frac{\gamma-1}{\gamma}}\Vert f\Vert_{L^\infty_T H^{\frac{1}{2}+\delta}}\Vert g\Vert_{L^\infty_T H^{\frac{1}{2}+\delta}}.$$}
			\item{(2) For $f,g\in L^\infty_T W^{\alpha}$, we have
				$$\Vert B(f,g)\Vert_{L^\infty_TH^{\frac{1}{2}+\delta}}\leq C_2T^{\frac{\delta}{\gamma}}\Vert f\Vert_{C_TW^\alpha}\Vert g\Vert_{C_TW^\alpha}.$$}
			\item{(3) For $f\in L_T^\infty {H^{\frac{1}{2}+\delta}}$ and $g\in L_T^\infty W^\alpha$, then we have
				\begin{align*}
					\Vert B(f,g)\Vert_{L^\infty_TH^{\frac{1}{2}+\delta}}\leq C_3T^{\frac{\delta}{\gamma}} \Vert f\Vert_{L^\infty_T H^{\frac{1}{2}+\delta}}\Vert g\Vert_{L^\infty_T H^{\alpha}}.
			\end{align*}}
		\end{itemize}
	\end{lemma}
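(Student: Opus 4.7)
The strategy is to apply Lemma~\ref{le;est;bilinear;general} with a judicious splitting $\frac{1}{2}+\delta = \sigma + \rho$ in each case, and then bound $\|fg\|_{H^\sigma}$ by the appropriate product norms via Moser or Bony--type estimates.

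For item (1), I take $\rho = 0$ and $\sigma = \frac{1}{2}+\delta$. Since $\sigma > \frac{1}{2}$, Moser estimate (2) of Lemma~\ref{le;Moser} gives $\|fg\|_{H^{1/2+\delta}} \lesssim \|f\|_{H^{1/2+\delta}}\|g\|_{H^{1/2+\delta}}$, and Lemma~\ref{le;est;bilinear;general} yields the factor $T^{(\gamma-1)/\gamma}$ directly, with the $(1+T^{\rho/\gamma}) = 2$ absorbed since $T\le 1$.

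For items (2) and (3), I take $\sigma = \alpha$ and $\rho = \frac{1}{2}+\delta - \alpha$. Using $2\delta = \alpha + \gamma - \frac{3}{2}$ this becomes $\rho = \frac{\gamma-\alpha}{2} - \frac{1}{4}$, and a direct computation shows $(\gamma-1-\rho)/\gamma = \delta/\gamma$, matching the stated time exponent. Positivity $\rho \ge 0$ reduces to $\gamma \ge \alpha + \frac{1}{2}$ (clear from $\gamma > \frac{4}{3}$, $\alpha < \frac{1}{2}$), while the subcritical condition $\rho < \gamma - 1$ demanded by Lemma~\ref{le;est;bilinear;general} rewrites as $\alpha + \gamma > \frac{3}{2}$---exactly the saturated constraint that dictates the thresholds $\gamma > \frac{4}{3}$ and $\alpha > \frac{1}{6}$.

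It then remains to prove $\|fg\|_{H^\alpha} \lesssim \|f\|_{W^\alpha}\|g\|_{W^\alpha}$ for (2) and $\|fg\|_{H^\alpha} \lesssim \|f\|_{H^{1/2+\delta}}\|g\|_{H^\alpha}$ for (3). In both I use the Bony decomposition $fg = T_fg + T_gf + R(f,g)$ combined with Lemma~\ref{le;est;paraproduct}. For (2), since $W^\alpha \hookrightarrow L^\infty \cap H^\alpha$, both paraproducts fall in $H^\alpha$ by item~(1) of Lemma~\ref{le;est;paraproduct}, and the resonant $R(f,g) \in H^{2\alpha} \hookrightarrow H^\alpha$ by item~(3) (since $2\alpha > 0$). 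For (3), the embedding $H^{1/2+\delta}\hookrightarrow L^\infty$ handles $T_fg$; $T_gf$ is controlled by Lemma~\ref{le;est;paraproduct}(2) after the Besov embedding $H^\alpha \hookrightarrow B^{\alpha-1/2}_{\infty,2}\hookrightarrow B^{\alpha-1/2-\delta}_{\infty,\infty}$ (using negative regularity $\alpha - \frac{1}{2} - \delta < 0$ to pair with $f\in H^{1/2+\delta}$); and for the resonant, item~(3) of Lemma~\ref{le;est;paraproduct} together with $H^\alpha \hookrightarrow B^{\alpha-1/2}_{\infty,2}$ places $R(f,g)$ in $B^{\alpha+\delta}_{2,1} \hookrightarrow H^\alpha$.

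The main obstacle is case (3), where the right-hand side controls $g$ only in $H^\alpha$ (not in $L^\infty$), so the paraproduct $T_gf$ cannot be estimated via the naive $L^\infty$ Bony bound and must instead be routed through negative Besov regularity of $g$. Everything else is routine bookkeeping; the essential analytic constraint $\alpha + \gamma > \frac{3}{2}$ enters only through the parameter choice in the general bilinear estimate.
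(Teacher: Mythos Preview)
Your proof is correct and follows precisely the route the paper indicates: the paper only writes ``By Moser-type estimate, it is not difficult to prove'' and gives no further details, so your argument---applying Lemma~\ref{le;est;bilinear;general} with $\rho=0$ for (1) and $\sigma=\alpha$, $\rho=\tfrac{1}{2}+\delta-\alpha$ for (2)--(3), then closing with product estimates---is exactly the intended filling-in. Your Bony decomposition for (2) is just the proof that $W^\alpha$ is a Banach algebra (which the paper already records), and your treatment of (3) via the embedding $H^\alpha\hookrightarrow B^{\alpha-1/2}_{\infty,2}$ is the natural way to handle $T_gf$ when $g$ is only controlled in $H^\alpha$; nothing is missing.
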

	\subsection{Well-posedness}
	In this section, we study the well-posedness of equation \eqref{eq;Burgers;white noise} for $\gamma>\frac{4}{3}$. We begin by considering the equation
	\begin{equation}\label{eq;C-M}
		\p_t v-\Lambda^\gamma v=\p_x(v^2)+2\p_x(v\tilde X)+\p_x(\tilde X^2),\quad u(0)=u_0.
	\end{equation}
	and solve it via a contraction mapping argument.
	\begin{proposition}\label{prop;local result;reg}
		Let $\alpha>\frac{1}{6}$, $\gamma\in (\frac{4}{3},2]$ and $u_0\in H^{\frac{1}{2}+\delta}$ for $2\delta\leq\alpha+\gamma-\frac{3}{2}$. If $\tilde X(t)\in{C_{\bar{T}}W^\alpha}$ for some $\bar{T}>0$ ,  then there exists a unique solution $v(t,x)$ of equation \eqref{eq;C-M} with a postive $T^*$ satisfying $v\in C_{T^*}H^{\frac{1}{2}+\delta}$.
		\begin{proof}
			Let $r=2\max(\Vert u_0\Vert_{H^{\frac{1}{2}+\delta}},\Vert \tilde X\Vert_{C_{\bar T}W^\alpha})$. We rewrite equation \eqref{eq;C-M} as 
			$$v(t,x)=P(t)u_0+B(v,\tilde X)+B(\tilde X,v)+B(\tilde X,\tilde X)+B(v,v),$$
			where $P(t) f$ satisfying linear equation 
			\begin{equation}
				\p_t P(t)f-\Lambda^\gamma P(t)f=0,\quad P(0)f=f.
			\end{equation}
			Firstly, we claim that there exist $0<T^*\leq 1\wedge \bar T$ such that for any $t\in[0,T^*]$
			\begin{equation}\label{eq;3.1;1}
				a:=P(t)u_0+B(\tilde X,\tilde X)\in B(0,r),\quad \Vert B\Vert_{C_{t}H^{\frac{1}{2}+\delta}}\leq \frac{1}{8r},\quad \sup_{v}\frac{2\Vert B(v,\tilde X)\Vert_{C_{t}H^{\frac{1}{2}+\delta}}}{\Vert v\Vert_{C_{t}H^{\frac{1}{2}+\delta}}}\leq \frac{1}{4}.
			\end{equation}
			For the first claim of \eqref{eq;3.1;1}, fixing $ 0<T\leq  1\wedge \bar T$ small enough to make sure that $C_2T^{\frac{\delta}{\gamma}}\Vert\tilde  X\Vert^2_{C_{\bar T} W^\alpha}\leq \frac{r}{4}$, then by Lemma \ref{le;est;bilinear}, we have
			\begin{align*}
				\Vert a\Vert_{C_TH^{\frac{1}{2}+\delta}}\leq&\Vert P(\cdot)u_0\Vert_{C_TH^{\frac{1}{2}+\delta}}+\Vert B(\tilde X,\tilde X)\Vert_{C_TH^{\frac{1}{2}+\delta}}\\
				\leq& \Vert u_0\Vert_{C_TH^{\frac{1}{2}+\delta}}+C_2T^{\frac{\delta}{\gamma}}\Vert \tilde X\Vert^2_{C_{\bar T} W^\alpha}\leq r.
			\end{align*}
			For the second claim of \eqref{eq;3.1;1}, fixing $0<T\leq  1$ small enough to make sure that $C_1 T^{\frac{\gamma-1}{\gamma}}\leq\frac{1}{8r}$, then by Lemma \ref{le;est;bilinear}, we have
			\begin{align*}
				\Vert B(v,v)\Vert_{L^\infty_TH^{\frac{1}{2}+\delta}}\leq& C_1T^{\frac{\gamma-1}{\gamma}}\Vert v\Vert_{C_T H^{\frac{1}{2}+\delta}}\Vert v\Vert_{C_T H^{\frac{1}{2}+\delta}}\\
				\leq& \frac{1}{8r}\Vert v\Vert_{C_T H^{\frac{1}{2}+\delta}}^2.
			\end{align*}
			For the third claim of \eqref{eq;3.1;1}, fixing $0<T\leq  1\wedge \bar T$ small enough to make sure that  $C_3T^{\frac{\delta}{\gamma}}\Vert \tilde X\Vert_{C_{\bar T} W^\alpha}\leq\frac{1}{8}$, then by Lemma \ref{le;est;bilinear}, we have
			\begin{align*}
				\frac{\Vert B(v,\tilde X)\Vert_{C_TH^s}}{\Vert v\Vert_{C_TH^s}}\leq& C_3T^{\frac{\delta}{\gamma}}\Vert \tilde X\Vert_{C_{\bar T}W^{\alpha}}\leq\frac{1}{8}.
			\end{align*}
			Let $T^*\leq \bar T$ satisfy all the condition above, we prove our claim is hold on $[0,T^*]$. By Lemma \ref{le;Picard th}, we finish our proof.
		\end{proof}
	\end{proposition}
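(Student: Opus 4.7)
The plan is to put \eqref{eq;C-M} in mild form and apply the generalized Picard iteration of Lemma \ref{le;Picard th} in the Banach space $E_{T^*}:=C_{T^*}H^{\frac{1}{2}+\delta}$. Exploiting the symmetry $B(f,g)=B(g,f)$ of the bilinear operator defined in \eqref{eq;bilinear}, the mild formulation of \eqref{eq;C-M} reads
\[
    v = P(t)u_0 + B(\tilde X,\tilde X) + 2B(v,\tilde X) + B(v,v),
\]
so I would take as ``data'' $a:=P(\cdot)u_0 + B(\tilde X,\tilde X)$, as linear part $L(v):=2B(v,\tilde X)$, and as bilinear part $\ml B(v_1,v_2):=B(v_1,v_2)$.

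I would then set $r := 2\max\bigl(\Vert u_0\Vert_{H^{\frac{1}{2}+\delta}},\Vert \tilde X\Vert_{C_{T^*} W^\alpha}\bigr)$ and verify the three hypotheses of Lemma \ref{le;Picard th} by shrinking $T^*$. The semigroup piece is controlled by $\Vert u_0\Vert_{H^{\frac{1}{2}+\delta}}\leq r/2$ uniformly in $t$ since $P(t)$ is a contraction on $H^{\frac{1}{2}+\delta}$, while Lemma \ref{le;est;bilinear}(2) applied to the noise-noise interaction yields $\Vert B(\tilde X,\tilde X)\Vert_{E_{T^*}}\lesssim (T^*)^{\delta/\gamma}\Vert\tilde X\Vert_{C_{T^*}W^\alpha}^2$, so $a\in B(0,r)$ for small $T^*$. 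For the nonlinearity, Lemma \ref{le;est;bilinear}(1) gives $\Vert \ml B(v,v)\Vert_{E_{T^*}}\lesssim (T^*)^{(\gamma-1)/\gamma}\Vert v\Vert_{E_{T^*}}^2$ and Lemma \ref{le;est;bilinear}(3) gives $\Vert L(v)\Vert_{E_{T^*}}\lesssim (T^*)^{\delta/\gamma}\Vert\tilde X\Vert_{C_{T^*}W^\alpha}\Vert v\Vert_{E_{T^*}}$. All three estimates carry a strictly positive power of $T^*$, so a single choice of $T^*$ below the minimum of three explicit thresholds ensures $\Vert \ml B\Vert_{E_{T^*}\to E_{T^*}}\leq 1/(8r)$ and the operator norm of $L$ is at most $1/4$; Lemma \ref{le;Picard th} then produces a unique fixed point in $B(0,2r)\subset E_{T^*}$, which is the desired solution.

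The main obstacle I anticipate is the self-interaction source term $B(\tilde X,\tilde X)$: since $\tilde X$ carries only the marginal regularity $W^\alpha$ with $\alpha>\frac{1}{6}$, forcing $B(\tilde X,\tilde X)\in H^{\frac{1}{2}+\delta}$ requires that the $\gamma-1$ derivatives of smoothing produced by $B$ absorb the low Sobolev regularity of $\tilde X^2$ coming from Moser-type bounds (Lemma \ref{le;Moser}). The tuning $2\delta=\alpha+\gamma-\frac{3}{2}$ is exactly what is needed for this to close, and it is the reason the argument demands $\gamma>\frac{4}{3}$ (so that $\delta>0$ is available for some admissible $\alpha>\frac{1}{6}$). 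Uniqueness is automatic from Lemma \ref{le;Picard th}; pushing $v$ beyond the threshold $T^*$ to obtain a global-in-time solution is a separate matter, handled elsewhere in the paper by energy estimates.
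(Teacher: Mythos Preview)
Your proposal is correct and follows essentially the same route as the paper: both cast \eqref{eq;C-M} in mild form, set $a=P(\cdot)u_0+B(\tilde X,\tilde X)$, $L(v)=2B(v,\tilde X)$, $\ml B=B$, and invoke the generalized Picard Lemma~\ref{le;Picard th} after shrinking $T^*$ so that the three parts of Lemma~\ref{le;est;bilinear} make $a\in B(0,r)$, $\Vert\ml B\Vert\le 1/(8r)$, and $\Vert L\Vert\le 1/4$. Your identification of $B(\tilde X,\tilde X)$ as the critical term and of the tuning $2\delta=\alpha+\gamma-\tfrac32$ matches the paper's reasoning exactly.
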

	\begin{proof}[\textbf{Proof of Theorem \ref{th;local result}}]
		Let $\gamma\in(\frac{4}{3},2]$, by Lemma \ref{le;regularity}, we have $X(t)\in C_{T^*} W^{\alpha}$ for some $\alpha\in(\frac{1}{6},\frac{\gamma}{2}-\frac{1}{2})$ where $X(t)$ satisfies \eqref{eq;linear evolution;noise}. By Proposition \ref{prop;local result;reg}, then we finish our proof by writing equation \eqref{eq;Burgers;white noise} as the \eqref{eq;C-M} with fixing $v=u-X$ and $\tilde X(t)=X(t)=\int_0^tP(t-s)\xi(s)\mathrm{d}s$.
	\end{proof}
	\begin{remark}
		While we prove the existence of the solution in the space $L_t^\infty H^s$, the equation's structure also allow the continuity of $v$ in time to be readily established. Observing that
		\begin{align*}
			v(t)-v(s)=(S(t)-S(s))u_0+B_{s,t}(v+X,v+X),
		\end{align*}
		where $B_{s,t}(u,u):=\int_s^tP(t-s)D(u^2)\mathrm{d}s$, Using the similar argument for proving well-posedness, we can easily get the continuous.
	\end{remark}
	To obtain a global solution, further energy estimates are required. A classical energy estimate for the heat equation (see \cite{bahouriFourierAnalysisNonlinear2011}) is applied to derive the corresponding estimate for the rough Burgers equation.
	\begin{lemma}\label{le;est;energy}
		Let $\gamma\in(1,2]$. Let $v$ be the solution in $\ml C([0,T];\ml S'(\mathbb{T}^d))$ of the Cauchy problem
		\begin{equation}\label{eq;heat equation;general}
			\left\{
			\begin{aligned}
				&\p_t v-\Lambda^\gamma v=f,\\
				&v|_{t=0}=v_0,\\		
			\end{aligned}
			\right.
		\end{equation}
		with $f\in L^2([0,T]; \dot H^{s-1})$ and $v_0\in \dot H^s(\mathbb{T}^d)$. Then, 
		\begin{align*}
			v\in (\cap_{p=2}^\infty L^p([0,T]; \dot H^{s+\frac{\gamma}{p}}))\cap \ml C([0,T]; \dot H^s).
		\end{align*}
		Moreover we have the following estimates:
		\begin{align*}
			\Vert v(t)\Vert_{\dot H^s}^2+2\int_0^t\Vert \Lambda^\frac{\gamma}{2} v(t')\Vert_{\dot H^s}^2\mathrm{d}t'=&\Vert v_0\Vert_{\dot H^s}^2+2\int_0^t\langle f(t'),v(t')\rangle_s\mathrm{d}t',\\
			(\sum_{k\neq 0}\langle k\rangle^{2s}(\sup_{0\leq t'\leq t}\vert \hat v(t',\xi)\vert)^2)^\frac{1}{2}\leq& \Vert v_0\Vert_{\dot H^s}+\Vert f\Vert_{L_T^2(\dot H^{s-\frac{\gamma}{2}})},\\
			\Vert v(t)\Vert_{L_T^p(\dot H^{s+\frac{\gamma}{p}})}\leq& \Vert v_0\Vert_{\dot H^s}+\Vert f\Vert_{L_T^2(\dot H^{s-\frac{\gamma}{2}})},
		\end{align*}
		with $\langle a,b\rangle_s=\int\langle \xi\rangle^{2s}\hat a(\xi)\overline{\hat b(\xi)}\mathrm{d}\xi$
		\begin{proof}
			The case of $\gamma=2$ is proved in \cite{bahouriFourierAnalysisNonlinear2011}, the extension to the case $\gamma\in(1,2]$ is straightforward.
		\end{proof}
	\end{lemma}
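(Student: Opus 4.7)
The plan is to follow the standard dyadic/Fourier approach used for the heat equation in Bahouri--Chemin--Danchin, adapting the exponential semigroup decay to the fractional symbol $e^{-t|\xi|^\gamma}$. All three estimates are essentially consequences of Duhamel's formula
\[
\hat v(t,\xi) = e^{-t|\xi|^\gamma}\hat v_0(\xi) + \int_0^t e^{-(t-t')|\xi|^\gamma}\hat f(t',\xi)\,dt',
\]
combined with the fact that for $\gamma\in(1,2]$ the semigroup symbol enjoys the same gain as in the parabolic case, with exponent $|\xi|^\gamma$ replacing $|\xi|^2$. Continuity in time in $\dot H^s$ follows from an approximation argument: smooth out $v_0$ and $f$, derive the identities for smooth solutions, and pass to the limit using the estimates themselves.

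For the first identity I would test the equation against $\langle\xi\rangle^{2s}\hat v$ in Fourier: for smooth data the computation
\[
\tfrac12\tfrac{d}{dt}\|v(t)\|_{\dot H^s}^2 + \|\Lambda^{\gamma/2}v(t)\|_{\dot H^s}^2 = \langle f(t),v(t)\rangle_s
\]
is direct from $\partial_t v = \Lambda^\gamma v + f$ and the self-adjointness of $\Lambda^\gamma$; integration in $t$ then yields the stated identity. The second (pointwise-in-$\xi$, $L^\infty_t$) bound comes from taking absolute values in Duhamel and applying Cauchy--Schwarz in the time integral: since $\bigl(\int_0^t e^{-2(t-t')|\xi|^\gamma}\,dt'\bigr)^{1/2}\lesssim |\xi|^{-\gamma/2}$, we obtain
\[
\sup_{0\le t'\le t}|\hat v(t',\xi)| \lesssim |\hat v_0(\xi)| + |\xi|^{-\gamma/2}\|\hat f(\cdot,\xi)\|_{L^2_T},
\]
and multiplying by $\langle\xi\rangle^s$ and taking $L^2_\xi$ gives the bound with the $\dot H^{s-\gamma/2}$ norm of $f$ on the right.

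For the mixed-norm estimate $L^p_T\dot H^{s+\gamma/p}$ I would localize in frequency with a homogeneous Littlewood--Paley decomposition. Each block $v_j=\dot\Delta_j v$ satisfies $\partial_t v_j - \Lambda^\gamma v_j = f_j$, and by Bernstein the symbol of $\Lambda^\gamma$ acts on $v_j$ like $2^{j\gamma}$, yielding
\[
\|v_j(t)\|_{L^2}\lesssim e^{-c2^{j\gamma}t}\|\dot\Delta_jv_0\|_{L^2}+\int_0^t e^{-c2^{j\gamma}(t-t')}\|f_j(t')\|_{L^2}\,dt'.
\]
Taking $L^p_T$ in time gives a factor $2^{-j\gamma/p}$ on the first term and, by Young's convolution inequality (convolving with $e^{-c2^{j\gamma}t}\in L^q$ for $1/q=1/2+1/p-1$... one uses $L^p_t*L^{2/(2-p)}_t$ gymnastics) a factor $2^{-j\gamma(1/p+1/2)}$ on the second. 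Multiplying by $2^{j(s+\gamma/p)}$, squaring and summing over $j$ together with an interchange (this is where one must use $\ell^2$-valued Minkowski, using $p\ge 2$) yields the desired estimate. The main point to check is uniformity of the constants in $\gamma\in(1,2]$, which is immediate since all manipulations only use the positivity of the symbol and the bound $|\xi|^\gamma\gtrsim 2^{j\gamma}$ on the support of $\dot\Delta_j$. Finally, continuity in time into $\dot H^s$ follows by density (smooth data give classical $C^1_t\dot H^s$ solutions) and the already-established $L^\infty_T\dot H^s$ bound applied to the difference of approximations.

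The only mildly delicate step is the $L^p_T\dot H^{s+\gamma/p}$ bound, where one must handle the Duhamel convolution with care; everything else is a direct transcription of the $\gamma=2$ argument.
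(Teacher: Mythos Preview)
Your proposal is correct and follows exactly the approach the paper intends: the paper's own proof merely cites Bahouri--Chemin--Danchin for $\gamma=2$ and declares the extension to $\gamma\in(1,2]$ ``straightforward,'' and what you have written is precisely that straightforward extension (Duhamel in Fourier, energy identity by testing, Cauchy--Schwarz for the pointwise-in-$\xi$ bound, and the dyadic Young/Minkowski argument for the $L^p_T\dot H^{s+\gamma/p}$ estimate). You have in fact supplied more detail than the paper itself.
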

	Next, we build the well-posedness in $L^4_T\dot H^{s+\frac
    \gamma 4}$, for $0\leq s<\gamma-\frac32$. The key observation is that for $s_1\leq s_2$, we have
    \begin{equation}\label{eq;key observation}
        \Vert \p_xf \Vert_{\dot H^{s_1}}\leq \Vert \p_x f\Vert_{\dot H^{s_2}}.
    \end{equation}
\begin{proposition}\label{prop;L4 existence}
        Let $u_0\in  \dot H^{s}$ for $s\in[0,\gamma-\frac32)$ with $\gamma\in(\frac85,2]$, then there exists a constant $C$ such that if the following conditions
    \begin{equation}
       CT_0^{\frac14}\Vert \tilde X\Vert_{C_{\bar T}W^\alpha} <\frac1{4}\quad for \; 0<T_0\leq \bar T \quad and\quad \alpha\geq\max(\frac{3}{2}-\frac{3\gamma}{4},s+1-\frac\gamma2),
    \end{equation}
hold, then there exists a positive time $T$ such that \eqref{eq;C-M} has a unique solution in $L_T^4([0,T];\dot H^{s+\frac\gamma 4})$. Moreover let $T_{u_0}$ denote the maximal time of existence of such a solution, then 
        \begin{itemize}
            \item {For the same constant $C$, \begin{align*}
                \Vert u_0\Vert_{ \dot H^{s}}\leq\frac1{16C}  \Longrightarrow T_{u_0}=T_0.
            \end{align*}}
            \item {If $T_{u_0}\leq T_0$, then \begin{equation}\label{eq;blow up criterion}
                \int_0^{T_{u_0}} \Vert u(t)\Vert_{\dot H^{s+\frac{\gamma}{4}}}^4dt=\infty.
            \end{equation}}
        \end{itemize}
        \begin{proof}
    We firstly give the local result for small initial data $u_0$. Noting that 
    $B(f,g)$ is the solution of 
    \begin{align*}
        \p_t B(f,g)-\Lambda^\gamma B(f,g)=\p_x (fg).
    \end{align*}
   Let $\gamma>\frac 85$ and $0\leq s<\gamma-\frac32$, by Lemma \ref{le;est;energy}, \eqref{eq;key observation} and Lemma \ref{le;est;paraproduct},  we have 
    \begin{align}
        \label{eq;a;L4}\Vert P(t)u_0+B(\tilde X,\tilde X)\Vert_{L_T^4\dot H^{s+\frac{\gamma}{4}}}\leq& \Vert u_0\Vert _{\dot H^{s}}+CT^{\frac12}\Vert \tilde X\Vert_{C_TW^\alpha}^2,\\
        \label{eq;Lv;L4}2\Vert B(v,\tilde X)\Vert_{L_T^4\dot H^{s+\frac{\gamma}{4}}}\leq& CT^{\frac{1}{4}}\Vert \tilde X\Vert_{C_TW^\alpha}\Vert v\Vert_{L_T^4\dot H^{s+\frac{\gamma}{4}}},\\
        \label{eq;Bvv;L4}\Vert B(v,v)\Vert_{L_T^4\dot H^{s+\frac{\gamma}{4}}}\leq & C\Vert v\Vert_{L_T^4\dot H^{s+\frac{\gamma}{4}}}^2.
    \end{align}
Let $r= \frac{1}{8 C}$, we know that if $\Vert u_0\Vert_{\dot H^{s}}\leq \frac1{16C}$ and 
    $$CT_0^{\frac14}\Vert \tilde X\Vert_{C_{\bar T}W^\alpha} <\frac1{4},\quad$$
    by Lemma \ref{le;Picard th}, there exists a unique solution of  \eqref{eq;C-M} in the ball with center $0$ and radius $\frac{1}{4C}$ in the space $L^4([0,T_0];\dot H^{s+\frac\gamma 4})$. Now we consider the case of large initial data $u_0$ in $\dot H^s$. We split $u_0$ into small part in $\dot H^s$ and a large part with compactly supported Fourier transform. For that we fix some positive real number $\rho_{u_0}$ such that 
    \begin{align*}
        (\sum_{\vert k\vert \geq \rho_{u_0}} |k|^{2s}\vert \widehat u_0(k)\vert^2 )^{\frac 12}\leq \frac 1 {32C}.
    \end{align*}
    Then by $\Vert P(t)u_0\Vert _{L_T^4\dot H^{s+\frac\gamma 4}}\leq \Vert u_0\Vert_{\dot H^s}$ and $u_0^\flat= \ml F^{-1}(1_{B(0,\rho_{u_0})}\widehat{u}_0)$, we get 
    \begin{align*}
        \Vert P(t)u_0\Vert _{L_T^4\dot H^{s+\frac\gamma 4}}\leq& \frac{1}{32C} +\Vert P(t)u^\flat_0\Vert _{L_T^4\dot H^{s+\frac\gamma 4}}\\
        \leq&\frac{1}{32C} +\rho_{u_0}^{\frac\gamma 4}\Vert P(t)u^\flat_0\Vert _{L_T^4H^{s}}\\
        \leq&\frac{1}{32C} +(\rho_{u_0}^\gamma T)^{\frac1 4}\Vert u_0\Vert_{\dot H^{s}}.
    \end{align*}
    Thus if 
    \begin{equation}\label{eq;existence time;condition}
        T\leq \left(\frac{1}{\rho_{u_0}^{\frac\gamma 4}32C\Vert u_0\Vert_{\dot H^{s}}}\right)^4\wedge T_0,
    \end{equation}
    then we have the existence of a unique solution in the ball with center $0$ and radius $\frac 1{4C}$ in the space $L_T^4\dot H^{s+\frac\gamma 4}$. Noting that if $v$ is a solution in $L^4([0,T];\dot H^{s+\frac\gamma 4})$, by Lemma \ref{le;est;energy}, we have $v\in \ml C([0,T];\dot H^{s})\cap L^2([0,T];\dot H^{s+\frac{\gamma}{2}})$
    \par Finally, we prove the blow-up criterion. Assume that we have a solution $v$ of \eqref{eq;C-M}
 on the time interval $[0,T[$ such that 
 \begin{align*}
     \int_0^T \Vert v(t)\Vert_{\dot H^{s+\frac\gamma 4}}^4dt<\infty,\quad T<T_{0}<\bar T.
 \end{align*}
 We claim that the lifespan $T_{u_0}$ of $v$ is greater than $T$. Indeed, thanks to Lemma \ref{le;est;energy}, we have 
 \begin{align*}
     \sum_{k\neq 0} |k|^{2s}(\sup_{t\in[0,T[}|\widehat v(t,k)|)^2\leq& \Vert v_0\Vert _{H^s}+\Vert \p_x (v^2+2v \tilde X+\tilde X^2)\Vert_{\dot H^{s-\frac\gamma 2}}\\
     \leq & \Vert v_0\Vert _{H^s}+C\Vert v\Vert_{L_T^4\dot H^{s+\frac{\gamma}{4}}}^2+CT^{\frac{1}{4}}\Vert\tilde X\Vert _{C_{\bar T}W^\alpha}\Vert v\Vert _{L_T^4 \dot H^{s+\frac{\gamma}{4}}}+CT^{\frac{1}{2}}\Vert \tilde X\Vert_{C_{\bar T}W^\alpha}^2\\
     \leq&\Vert v_0\Vert_{H^s}+C\Vert v\Vert_{L_T^4\dot H^{s+\frac{\gamma}{4}}}^2+\frac14\Vert v\Vert_{L_T^4\dot H^{s+\frac{\gamma}{4}}}+\frac{1}{16C}<\infty.
 \end{align*}
 Thus, there exists a positive number $\rho$ exists such that
 \begin{align*}
     \forall t\in[0,T[,\quad \sum_{|k|\geq \rho}|k |^{2s}(\sup_{t\in[0,T[}|\widehat v(t,k)|)^2\leq \frac{1}{32C}.
 \end{align*}
 Noting that the choise of $\rho$ is independent of $t$, by condition of \eqref{eq;existence time;condition}, we can get $T_0\geq T_{u_0}>T$ which finish our proof.
\end{proof}
        \end{proposition}
\begin{proposition}\label{prop;est;energy}
		Let $\gamma\in(\frac{3}{2},2]$, $\alpha\in(1-\frac{\gamma}{2},\frac{\gamma}{2}-\frac{1}{2})$ and $0<T\leq \bar T$, let $v$ be the smooth solution of \eqref{eq;C-M} on $[0,T]$ for $u_0\in L^2$ with $\tilde X\in C_{\bar T}W^\alpha$. Then for any $t\in[0,T]$, we have the following energy estimate
		\begin{equation}\label{eq;est;energy;Burgers}
			\Vert v\Vert_{L^2}^2+2(1-\nu)\int_0^t \Vert v(t')\Vert_{H^\frac{\gamma}{2}}^2\mathrm{d}t'\leq\Vert u_0\Vert_{L^2}^2e^{C_\nu(1+\Vert \tilde X\Vert_{C_{\bar T}W^\alpha}^{\frac{2\gamma}{\gamma-1}})t}+C_\nu\Vert \tilde X\Vert_{C_{\bar T}W^\alpha}^4\int_0^t e^{C_\nu(1+\Vert \tilde X\Vert_{C_{\bar T}W^\alpha}^{\frac{2\gamma}{\gamma-1}})(t-t')} \mathrm{d}t'.
		\end{equation} 
		for some small $\nu<1$.
		\begin{proof}
			Observing that $v$ satisfies \eqref{eq;heat equation;general} with $f=\p_x(v+\tilde X)^2$. By Lemma \ref{le;est;energy} and integrating by parts, we have
			\begin{align*}
				\Vert v(t)\Vert_{L^2}^2+2\int_0^t \Vert v(t')\Vert_{H^\frac{\gamma}{2}}^2\mathrm{d}t'\leq \Vert u_0\Vert_{L^2}^2+2\int_0^t\Vert v(t')\Vert_{L^2}^2\mathrm{d}t'+2\int_0^t\langle v(t'),2\p_x(v(t')\tilde X(t'))+\p_x(\tilde X^2(t'))\rangle_{0}\mathrm{d}t'.
			\end{align*}
			By H\"older inequality, Young's inequality, Lemma \ref{le;est;paraproduct} and Lemma \ref{le;regularity}, there exists $\alpha\in(1-\frac{\gamma}{2},\frac{\gamma}{2}-\frac{1}{2})$ such that
			\begin{align*}
				\langle v,2\p_x(v\tilde X)+\p_x(\tilde X^2)\rangle_{0}\leq& 2\Vert v\Vert_{H^\frac\gamma2}\Vert v\tilde X\Vert_{H^{1-\frac{\gamma}{2}}}+\Vert v\Vert_{H^\frac\gamma2}\Vert \tilde X^2\Vert_{H^{1-\frac{\gamma}{2}}}\\
				\leq&2\Vert v\Vert_{H^\frac\gamma2}^{1+\frac{1}{\gamma}}\Vert v\Vert_{L^2}^{1-\frac{1}{\gamma}}\Vert \tilde X\Vert_{W^\alpha}+\Vert v\Vert_{H^\frac\gamma2}\Vert \tilde X\Vert_{W^\alpha}^2\\
				\leq & \frac{\nu}{2} \Vert v\Vert_{H^\frac\gamma2}^{2}+C_\nu(\Vert v\Vert_{L^2}^{2}\Vert \tilde X\Vert^{\frac{2\gamma}{\gamma-1}}_{W^\alpha}+\Vert \tilde X\Vert_{W^\alpha}^4).
			\end{align*}
			For $0\leq t\leq T$, we have 
			\begin{equation}
				\begin{aligned}
					\Vert v(t)\Vert_{L^2}^2+2\int_0^t \Vert v(t')\Vert_{H^\frac{\gamma}{2}}^2\mathrm{d}t'\leq& \Vert u_0\Vert_{L^2}^2+\nu \int_0^t \Vert v(t')\Vert_{H^\frac{\gamma}{2}}^2\mathrm{d}t'+C_\nu\int_0^t (1+\Vert \tilde X(t')\Vert_{W^\alpha}^{\frac{2\gamma}{\gamma-1}})\Vert v(t')\Vert_{L^2}^2\mathrm{d}t'\\
					&+C_\nu\int_0^t\Vert \tilde X(t')\Vert_{W^{\alpha}}^4\mathrm{d}t'.	
				\end{aligned}
			\end{equation}
			Observing that $\Vert\tilde X\Vert_{C_tW^\alpha}\leq \Vert \tilde X\Vert_{C_{\bar T}W^\alpha}$, for $\nu<1$, by Gronwall's inequality, the proposition is proved.
		\end{proof}
	\end{proposition}
	Furthermore, the following lemma shows the estimate of $v$ can be improved to higher regularity.
	\begin{lemma}\label{le;est;energy;higher}
		Let $\gamma\in(\frac{3}{2},2]$, $\alpha\in(s+1-\frac{\gamma}{2},\frac{\gamma}{2}-\frac{1}{2})$ , $0< s<\gamma-\frac{3}{2}$ and $0<T\leq \bar T$. Let $v$ be the smooth solution of \eqref{eq;C-M} on $[0,T]$ for $u_0\in H^s$ with $\tilde X\in C_{\bar T}W^\alpha$. For $t\in[0,T]$, we have the following energy estimate
		\begin{equation}\label{eq;est;energy;higher}
			\begin{aligned}
				\Vert \Lambda^s v\Vert_{L^2}+2(1-\nu)\int_0^t \Vert \Lambda^s v\Vert_{ H^{\frac{\gamma}{2}}}^2\mathrm{d}t'\leq& \Vert u_0\Vert^2_{H^s}e^{C_\nu \int_0^t \Vert v\Vert_{H^{\frac{\gamma}{2}}}^2\mathrm{d}t'}+C_\nu\Vert \tilde X\Vert_{C_{\bar T}W^\alpha}^2\int_0^t e^{C_\nu \int_{t'}^t \Vert v\Vert_{ H^{\frac{\gamma}{2}}}^2\mathrm{d}t''}\Vert v\Vert_{ H^{\frac{\gamma}{2}}}^2\mathrm{d}t'\\
				&+C_\nu\Vert \tilde X\Vert_{C_{\bar T}W^\alpha}^4\int_0^t e^{C_\nu \int_{t'}^t \Vert v\Vert_{ H^{\frac{\gamma}{2}}}^2\mathrm{d}t''}\mathrm{d}t'
			\end{aligned}
		\end{equation}
		for some small $\nu< 1$. Moreover, $v\in C_TH^{s}\cap L_T^2H^{s+\frac{\gamma}{2}}$.
		\begin{proof}
			Recalling $\Lambda^sv$ satisfies the following equation
			\begin{equation}\label{eq;Burgers;high order}
				\p_t(\Lambda^sv)-\Lambda^\gamma (\Lambda^sv)= \Lambda^s \p_x(v^2)+2\Lambda^s\p_x(v\tilde X)+\Lambda^s\p_x(\tilde X^2),
			\end{equation}
			multiplying $\Lambda^sv$ both sides and integrating on $\m T$, we have
			\begin{equation}\label{eq;energy;higher order}
				\frac{1}{2}\p_t\Vert \Lambda^s v\Vert_{L^2}^2+\Vert \Lambda^s v\Vert_{\dot H^{\frac{\gamma}{2}}}^2=\langle \Lambda^s\p_x (v^2),\Lambda^s v\rangle+2\langle \Lambda^s\p_x (v\tilde X),\Lambda^s v\rangle+\langle \Lambda^s\p_x (\tilde X^2),\Lambda^s v\rangle.
			\end{equation}
			For the first term in the right side of \eqref{eq;energy;higher order}, by Parsavel formula and H\"older's inequality, we have
			\begin{align*}
				\langle \Lambda^s\p_x (v^2),\Lambda^s v\rangle\leq& \Vert v\Vert_{\dot H^{s+\frac{\gamma}{2}}}\Vert D^{{s+1-\frac{\gamma}{2}}}(v^2)\Vert_{L^2}\\
				\leq& \Vert v\Vert_{\dot H^{s+\frac{\gamma}{2}}}\Vert D^{{s+1-\frac{\gamma}{2}}}v^2-2vD^{s+1-\frac{\gamma}{2}}v\Vert_{L^2}+\Vert v\Vert_{\dot H^{s+\frac{\gamma}{2}}}\Vert 2vD^{s+1-\frac{\gamma}{2}}v\Vert_{L^2}.
			\end{align*}
			By the estimate in \cite{kenig1993well,grafakos2014kato} and the interpolation, for $0<s+1-\frac{\gamma}{2}<1$ and $\beta=\frac{s}{2}+\frac{1}{2}-\frac{\gamma}{4}$, we have
			\begin{align*}
				\Vert D^{{s+1-\frac{\gamma}{2}}}v-2vD^{s+1-\frac{\gamma}{2}}v\Vert_{L^2}\lesssim& \Vert D^{\beta}v\Vert_{L^4}^2\\
				\lesssim& \Vert v\Vert_{\dot H^s}\Vert v\Vert_{\dot H^{\frac{\gamma}{2}}},
			\end{align*}
			for $\gamma\geq\frac{3}{2}$. By H\"older's inequality and Sobolev embedding in \cite{bahouriFourierAnalysisNonlinear2011}, we have
			\begin{align*}
				\Vert 2vD^{s+1-\frac{\gamma}{2}}v\Vert_{L^2}\lesssim& \Vert v\Vert_{L^{\frac{2}{1-2s}}}\Vert D^{s+1-\frac{\gamma}{2}}v\Vert_{L^{\frac{2}{2s}}}\\
				\lesssim &\Vert v\Vert_{\dot H^{s}}\Vert D^{s+1-\frac{\gamma}{2}}v\Vert_{\dot H^{\frac{1}{2}-s} }\\
				\lesssim& \Vert v\Vert_{\dot H^{s}}\Vert v\Vert_{  \dot H^{\frac{\gamma}{2}} },
			\end{align*}
			for $s<\frac{1}{2}$ and $\gamma>\frac{3}{2}$. Then we have
			\begin{align*}
				\langle \Lambda^s\p_x (v^2),\Lambda^s v\rangle\leq \frac{\nu}{3}\Vert v\Vert_{\dot H^{s+\frac{\gamma}{2}}}^2+C_\nu\Vert v\Vert_{\dot H^{s}}^2\Vert v\Vert_{\dot H^{\frac{\gamma}{2}} }^2.
			\end{align*}
			\par For the second term and third term of right hand side of \eqref{eq;energy;higher order}, by Parsavel formula H\"older's inequality and Lemma \ref{le;est;paraproduct}, for $ s+1-\frac{\gamma}{2}\leq \alpha<\frac{\gamma}{2}-\frac{1}{2}$, we have
			\begin{align*}
				2\langle \Lambda^s \p_x(vX),\Lambda^s v\rangle\leq& \Vert vX\Vert_{\dot H^{s+1-\frac{\gamma}{2}}}\Vert v\Vert_{\dot H^{s+\frac{\gamma}{2}}}\\
				\leq &\Vert X\Vert_{W^\alpha} \Vert v\Vert_{ H^{s+\frac{3}{2}-\frac{\gamma}{2}}}\Vert v\Vert_{\dot H^{s+\frac{\gamma}{2}}}\\
				\leq &\Vert X\Vert_{W^\alpha} \Vert v\Vert_{ H^{\frac{\gamma}{2}}}\Vert v\Vert_{\dot H^{s+\frac{\gamma}{2}}}\\
				\leq &C_\nu\Vert X\Vert_{W^\alpha}^2 \Vert v\Vert^2_{ H^{\frac{\gamma}{2}}}+\frac{\nu}{3}\Vert v\Vert_{\dot H^{s+\frac{\gamma}{2}}}^2,
			\end{align*}
			and  
			\begin{align*}
				\langle \Lambda^s \p_x(X^2),\Lambda^s v\rangle\leq& \Vert X^2\Vert_{\dot H^{s+1-\frac{\gamma}{2}}}\Vert v\Vert_{\dot H^{s+\frac{\gamma}{2}}}\\
				\leq &C_\nu\Vert X\Vert_{W^\alpha}^2\Vert v\Vert_{\dot H^{s+\frac{\gamma}{2}}}\\
				\leq &C_\nu\Vert X\Vert_{W^\alpha}^4+\frac{\nu}{3}\Vert v\Vert_{\dot H^{s+\frac{\gamma}{2}}}^2.
			\end{align*}
			Combining the above estimates, and integral \eqref{eq;energy;higher order} on $[0,t]$, we have
			\begin{align*}
				\frac{1}{2}\Vert \Lambda^s v(t)\Vert_{L^2}^2+(1-\nu)\int_0^t \Vert  \Lambda^s v(t')\Vert_{\dot H^{\frac{\gamma}{2}}}\mathrm{d}t'\leq& \frac{1}{2}\Vert u_0\Vert_{H^s}^2+C_\nu\int_0^t \Vert X(t')\Vert_{W^\alpha}^2\Vert v(t')\Vert_{\dot H^{\frac{\gamma}{2}}}^2\mathrm{d}t'+C_\nu\int_0^t \Vert X(t')\Vert_{W^\alpha}^4\mathrm{d}t'\\
				&+C_\nu\int_0^t\Vert v(t')\Vert_{\dot H^{s}}^2\Vert v(t')\Vert_{\dot H^{\frac{\gamma}{2}} }^2\mathrm{d}t'.
			\end{align*}
			Since $\int_0^t \Vert v\Vert_{\dot H^{\frac{\gamma}{2}}}\mathrm{d}t'$ is uniformly bounded by constant depending only on $\Vert u_0\Vert_{L^2}$ and $\Vert X\Vert_{C_{\bar T}W^\alpha}$. By Gronwall's inequality and $H^{s}\subset \dot H^s$ for $s\geq0$, we prove \eqref{eq;est;energy;higher} for all $v\in C_TL^2\cap L_T^2H^{\frac{\gamma}{2}}$. 
		\end{proof}
	\end{lemma}
Now we prove that $v$ given by Proposition \ref{prop;L4 existence}
 satisfied the energy estimates.
 \begin{lemma}\label{le;existence;blowup+energy}
     Let $u_0\in  \dot H^\sigma$ for $\sigma>0$, $\gamma>\frac{5}{3}$, then for $\tilde X=X$ satisfying \eqref{eq;linear evolution;noise}, there exists a unique weak solution $v$ of  equation \eqref{eq;C-M} satisfying energy estimate \eqref{eq;est;energy;Burgers} and blow up criterion  \eqref{eq;blow up criterion}. \begin{proof}
    Noting that $X$ satisfies \eqref{eq;linear evolution;noise}, by Proposition \ref{le;regularity}, we have $X\in C_TW^{\alpha}$ for any $\alpha\in(\frac
    13,\frac{\gamma}{2}-\frac{1}{2})$. Then for $\gamma>\frac53$ and $s\in[0,\gamma-\frac{3}{2})$, we can choose $\alpha$ satisfying 
    \begin{align*}
       \alpha\geq\max(\frac{3}{2}-\frac{3\gamma}{4},s+1-\frac\gamma2).
    \end{align*}
    Choose a function $\phi:\mathbb{R}\mapsto\mathbb{R}_+$ that is even, smooth, compactly supported, decreasing on $\mathbb{R}_+$, and such that $\phi(0)=1$, and then set 
		$$\xi^\epsilon(x)=\ml F^{-1}(\phi(\epsilon k)\ml F(\xi)(k))(x),\quad u_0^\epsilon(x)=\ml F^{-1}(\phi(\epsilon k)\ml F(u_0)(k))(x).$$ 
		Let $X^\epsilon$ solve the equation
		\begin{align*}
			\p_t X^\epsilon-\Lambda^\gamma X^\epsilon=\xi^\epsilon,\quad X^{\epsilon}(0)=0.
		\end{align*}
		Without loss of generality, we assume that $\Vert X^\epsilon\Vert_{C_TW^\alpha}\leq \Vert X\Vert_{C_TW^\alpha}$ and $\Vert u^\epsilon_0\Vert_{\dot H^s}\leq \Vert u_0\Vert_{\dot H^s}$. Let $v^\epsilon$ be the solution of equation
		\begin{equation}
			\p_t v^\epsilon-\Lambda^\gamma v^\epsilon=\p_x(v^\epsilon)^2+2\p_x(v^\epsilon X^\epsilon)+\p_x(X^\epsilon)^2,\quad v^\epsilon(0)=u_0^\epsilon.
		\end{equation}
Then by Proposition \ref{prop;L4 existence}, there exists $T_{u_0}$(Noting that $T_{u_0^\epsilon}\geq T_{u_0}$.) such that $v^\epsilon\in L^4_{T_{u_0}}\dot H^{s+\frac\gamma 4}$ for any $s\geq0$. Noting that $X^\epsilon \in C_TW^{\infty}$, $u_0^\epsilon\in \dot H^{\infty}$, by Lemma \ref{le;est;energy}, we have $v^\epsilon\in C_T\dot H^{s}\cap L_T^2\dot H^{s+\frac{\gamma}{2}}$ for any $s\geq0$. Choosing $s=0$, then by Banach–Alaoglu Theorem, Fatou's lemma(\cite{bahouriFourierAnalysisNonlinear2011}) and Proposition \ref{prop;est;energy}, there exists $\psi:\m R_+\mapsto\m R_+$ and a subsequence $v_{\psi(\epsilon)}$ satisfying
		$$\lim_{\epsilon\to0}\psi(\epsilon)\to 0,\quad \lim_{\epsilon\to0} v_{\psi(\epsilon)}\rightharpoonup  v\quad in \: L^2,\quad \Vert v\Vert_{C_TL^2}^2+\int_0^t\Vert v\Vert_{H^{\frac{\gamma}{2}}}^2\mathrm{d}s\leq C(\Vert u_0\Vert_{L^2},\Vert X\Vert_{C_TW^\alpha}).$$
		To prove $v$ is the solution of \eqref{eq;C-M}, we need the strong convergence of $v_{\psi(\epsilon)}$ in $L^2$ to make sure that $\p_x(v_{\psi(\epsilon)})^2$ can be weak-converge to $\p_x (v^2)$(it's already valid for $\p_x(X^\epsilon)^2$ and $\p_x(X^\epsilon v^\epsilon)$). We use the Aubin-Lions-Simon Theorem in \cite{simon1986compact}. It suffices to show the following two conditions
		\begin{equation}\label{eq;condition}
			\p_tv^\epsilon \in L_T^p(H^{-\sigma}), \quad and \quad v^\epsilon \in C_TH^s,
		\end{equation}
		uniformly hold for some $\sigma,s>0$, $p>1$. Observing that $\p_t v^\epsilon=\Lambda^\gamma v^\epsilon+\p_x(v^\epsilon)^2+2\p_x(v^\epsilon X^\epsilon)+\p_x(X^\epsilon)^2$, by Lemma \ref{le;est;paraproduct} and Proposition \ref{prop;est;energy}, we have $\p_t v^\epsilon$ is uniformly bounded for any $\sigma\geq 1$. The second condition of \eqref{eq;condition} is a consequence of Lemma \eqref{le;est;energy;higher} when we choose $0<s\leq \sigma$.
        \par Now we get a solution $v$ in $C_TL^2\cap L_T^2\dot H^{\frac{\gamma}{2}}$ for some $T>0$ and satisfying the energy estimate. Noting that 
        \begin{equation}\label{eq;interpolation}
   \Vert v(t) \Vert _{L_T^4\dot H^{\frac\gamma 4}}\leq \Vert v(t) \Vert _{L_T^4H^{\frac\gamma 4}}\leq\Vert v(t)\Vert_{C_TL^2} ^{\frac1 2}\Vert v(t)\Vert_{L_T^2H^{\frac\gamma 2}} ^{\frac1 2},
        \end{equation}
        we have that $v$ is also the solution in $L_T^4\dot H^{\frac\gamma 4}$ with initial data in $L^2$. By the uniqueness of Proposition \ref{prop;L4 existence}, we get $v$ also has the blow up criterion. Finally, we prove that the chooise of  $v$ is independent by the choosing of mollifier $\phi$. It suffices to show the uniqueness of $v$ in $C_TL^2\cap L^2\dot H^{\frac\gamma 2}$. Let $u_0$ belong to $ H^{\sigma}$ for $\sigma>0$ and $v^1(t;u_0)$ and $v^2(t;u_0)$ be the solutions on $[0,T]$ given above and  $\overline v:=v(t;u^1_0)-v(t;u^2_0)$, then $\overline v$ satisfies the equation
		\begin{align*}
			\p_t \overline v-\Lambda^\gamma \overline v=\p_x(\overline v(v^1+v^2))+2\p_x(\overline v X).
		\end{align*}
		Noting that $v^1$ and $v^2$ belong to $C_TL^2\cap L_T^2\dot H^{\frac{\gamma}{2}}$, multiplying $\bar v$ both sides and integrating, we obtain the estimate
		\begin{align*}
			\Vert \overline v(t)\Vert_{L^2}^2+\int_0^t \Vert \overline v(t')\Vert_{H^\frac{\gamma}{2}}^2\mathrm{d}t'\leq2\int_0^t\Vert \overline v(t')\Vert_{L^2}^2\mathrm{d}t'+\int_0^t\langle \overline v(t'),\p_x(\overline v(t')(v^1(t')+v^2(t')))+2\p_x(\overline v(t') X(t'))\rangle_{0}\mathrm{d}t'.
		\end{align*}
		By H\"older's inequality, Parsavel formula, Lemma \ref{le;est;paraproduct} and \eqref{eq;key observation}, for $\gamma>\frac{5}{3}$, we have
		\begin{align*}
			\langle \overline v,\p_x(\overline v(v^1+v^2))\rangle_{0}\leq& \Vert \overline v\Vert_{\dot H^{\frac{\gamma}{2}}}\Vert \overline v(v^1+v^2)\Vert_{\dot H^{\frac{\gamma}{2}-\frac{1}{2}}}\\
            \leq&\Vert \overline v\Vert_{\dot H^{\frac{\gamma}{2}}}\Vert \overline v\Vert_{\dot H^{\frac{\gamma}{4}}}\Vert(v^1+v^2)\Vert_{\dot H^{\frac{\gamma}{4}}}\\
			\leq &\Vert \bar v\Vert^{\frac32}_{\dot H^{\frac{\gamma}{2}}}\Vert \bar v\Vert^{\frac12}_{L^2}\Vert v^1+v^2\Vert_{\dot H^{\frac{\gamma}4}}\\
			\leq &\frac{\nu}{2}\Vert \bar v\Vert^{2}_{\dot H^{\frac{\gamma}{2}}}+\Vert \bar v\Vert^{2}_{L^2}\Vert v^1+v^2\Vert^{4}_{\dot H^{\frac{\gamma}4}}.
		\end{align*}
 Similarly, by the same estimate as in \eqref{eq;Lv;L4}, we have
		\begin{align*}
			\langle \overline v,2\p_x(\overline v X))\rangle_{0} \leq&2 \Vert \overline v\Vert_{\dot H^{\frac{\gamma}{2}}}\Vert \overline vX\Vert_{\dot H^{1-\frac{\gamma}{2}}}\\
            \leq&\Vert \overline v\Vert_{\dot H^{\frac{\gamma}{2}}}\Vert \overline v\Vert_{\dot H^{\frac{\gamma}{4}}}\Vert X\Vert_{W^\alpha}\\
			\leq &\Vert \bar v\Vert^{\frac32}_{\dot H^{\frac{\gamma}{2}}}\Vert \bar v\Vert^{\frac12}_{L^2}\Vert X\Vert_{W^\alpha}\\
			\leq &\frac{\nu}{2}\Vert \bar v\Vert^{2}_{\dot H^{\frac{\gamma}{2}}}+\Vert \bar v\Vert^{2}_{L^2}\Vert X\Vert^{4}_{W^\alpha}.
		\end{align*}
        Noting that $\int_0^t\Vert v^1\Vert_{\dot H^{\frac{\gamma}{4}}}^4dt$ and $\int_0^t\Vert v^2\Vert_{\dot H^{\frac{\gamma}{4}}}^4dt$ for $t\in[0,T]$ are finite, by the Gronwall lemma, we get $\Vert \overline v(t)\Vert_{L^2}^2+\int_0^t \Vert \overline v(t')\Vert_{H^\frac{\gamma}{2}}^2\mathrm{d}t'\leq0$ for any $t\in[0,T]$ until the blow-up time, which finishes our proof.
        
        \end{proof}
 \end{lemma}
 \begin{proof}[\textbf{Proof of Theorem \ref{th;global result}}]
		\par Noting that $u_0\in\dot H^\sigma$ for $\sigma>0$, let $v$ be the solution satisfying Lemma \ref{le;existence;blowup+energy}. To obtain the global solution, we proceed with our induction argument. Firstly, find a $T_0$ satisfying
$$CT_0^\frac14 \Vert X\Vert_{C_{\bar T}W^\alpha} \leq \frac14.$$
We prove that for any $u_0\in H^\sigma$ for some $\sigma>0$, there exists a local solution on $[0,T_0]$. Indeed, by Proposition \ref{prop;L4 existence},  there exists $v$ on $[0,T_{u_0}\wedge T_0)$ satisfying energy estimate \eqref{eq;est;energy;Burgers} and the following blow-up criterion that if $T_{u_0}\leq T_0$, then
$$\int_0^{T_{u_0}} \Vert v(t)\Vert^4_{\dot H^{\frac\gamma 4}}dt=\infty .$$
By interpolation \eqref{eq;interpolation}, if $T_{u_0}\leq T_0$, then we have
\begin{align*}
    \forall T<T_{u_0}, \quad\int_0^{T} \Vert v(t)\Vert^4_{\dot H^{\frac\gamma 4}}dt\leq C(\Vert u_0\Vert_{H^\sigma}, \Vert X\Vert_{C_{\bar T }W^\alpha}, T_0)<\infty.
\end{align*}
By contradiction, we have $T_{u_0}>T_0$, which shows that $v$ is the solution on $[0,T_0]$. Consider equation \eqref{eq;C-M} with initial data $v(T_0)$. Setting $ v^{(1)}(t)=v(T_0+t)$, $ X^{(1)}(t)=X(T_0+t)$, we find that $v^{(1)}(t)$ satisfies the equation
		\begin{equation}
			\p_t v^{(1)}(t)-\Lambda^\gamma  v^{(1)}(t)=\p_x((v^{(1)}+ X^{(1)})^2),\quad  v^{(1)}(0)=v({T_0}).
		\end{equation}
		By a similar argument, we can also get a solution $v^{(1)}$ of equation on $[0,T_{v(T_0)}\wedge T_0[$ , since 
		$$\Vert  X^{(1)}\Vert_{C_tW^{\alpha}}=\Vert X\Vert_{C_{[T_0,T_0+t]}W^{\alpha}}\leq\Vert X\Vert_{C_{\bar T}W^{\alpha}},\quad for\: t\in[0,\bar T-T_0].$$  Then using the similar argument as above implies that $v^{(1)}(t)$ is the solution on $[0,T_0]$.
		Repeating such process, we can get $v^{(n)},X^{(n)}$ satisfying 
		\begin{equation}
		 \Vert X^{(n)}\Vert_{C_{t}W^\alpha}\leq\Vert X\Vert_{C_{\bar T}W^{\alpha}},\quad v^{(n)}(t)=v^{(n-1)}(T_0+t),\quad for\: t\in[0,\bar T-nT_0],
		\end{equation}
		where $v^{(n)}$ on $[0,T_0]$ is also a solution of \eqref{eq;C-M}. Setting $v:=v^{(0)}$ on $[0,T_0]$, $v^{new}:=v^{(i)}$ on $[iT_0,(i+1)T_0]$ for $i\leq n$, $v^{new}$ is a solution in $C([0,(n+1)T_0];L^2(\mathbb{T}))\cap L^2([0,(n+1)T_0];H^\frac{\gamma}{2}(\mathbb{T}))$.  Choosing $n=[\frac{\bar T}{T_0}]$, we get $v\in C([0,\bar T];L^2(\mathbb{T}))\cap L^2([0,\bar T];H^\frac{\gamma}{2}(\mathbb{T}))$. Since $\bar T$ is arbitrary, we obtain $v$ as a weak global solution of \eqref{eq;C-M}. By the proof of Lemma \ref{le;existence;blowup+energy}, it's unique. Since $X$ satisfies \eqref{eq;linear evolution;noise}, we prove that $v$ is the weak global solution of \eqref{eq;Burgers;white noise}.
	\end{proof}
	\subsection{Application to DP equation}
	This section demonstrates the application of the aforementioned theory to the study of a class of shallow water wave equations. Taking the DP equation as an example, similar as the equation \eqref{eq;C-M}, we consider the difference equation of type of DP.
	\begin{equation}\label{eq;C-M;DP}
		\p_t v-\Lambda^\gamma v=\frac{1+3(1-\p_x^2)^{-1}}{2}\p_x(v^2+2v\tilde X+\tilde X^2),\quad v(0)=u_0.
	\end{equation}
	The only difference between \eqref{eq;C-M;DP} and \eqref{eq;C-M} lies on $\frac{3}{2}(1-\partial_x^2)^{-1}\partial_x(u^2)$. Note that the operator $(1-\partial_x^2)^{-1}$ is an `stable' operator in $H^s$ since $$\Vert (1-\partial_x^2)^{-1}f\Vert_{H^s}\leq \Vert f\Vert_{H^{s-2}},\quad for \: s\in \m R.$$
	Hence, we can easily get the local result for rough DP equation. For the global result, we need the following energy estimate for DP equation.
	\begin{proposition}\label{prop;est;energy;DP}
		For $\gamma\in(\frac{3}{2},2]$, $\alpha\in(1-\frac\gamma 2,\frac{\gamma}{2}-\frac{1}{2})$ and $0\leq T\leq \bar T<\infty$. Let $v$ be a smooth solution of \eqref{eq;C-M;DP} on $[0,T]$ for $u_0\in L^2$ and $\tilde X(t)\in C_{\bar T}W^{\alpha}$, then we have the following energy estimate
		\begin{equation}\label{eq;est;energy;DP}
			\begin{aligned}
				\Vert v\Vert_{L^2}^2+(8-8\nu)\int_0^t \Vert v\Vert_{H^\frac{\gamma}{2}}^2\mathrm{d}\tau\leq16\Vert u_0\Vert_{L^2}^2e^{C_\nu(1+ \Vert \tilde X\Vert_{C_{\bar T}W^\alpha}^{\frac{2\gamma}{\gamma-1}})t}+C_\nu\Vert \tilde X\Vert_{C_{\bar T}W^\alpha}^4\int_0^t e^{C_\nu(1+\Vert \tilde X\Vert_{C_{\bar T}W^\alpha}^{\frac{2\gamma}{\gamma-1}})(t-t')} \mathrm{d}t'.
			\end{aligned}
		\end{equation}
		for some small $\nu<1$.
		\begin{proof}
			Let $n=(1-\p_x^2)v$ and $w=(4-\p_x^2)^{-1}v$, similar as the proof  of Lemma 4.1 in \cite{chen2024probabilistic}, by \eqref{eq;C-M;DP}, we have
			\begin{align*}
				\int_{\mathbb{T}}v\p_xnw+3\p_xvnw\mathrm{d}x=0.
			\end{align*}
			Then we can derive it to the following equation
			\begin{align*}
				\p_t\int_{\mathbb{T}}nw\mathrm{d}x=&2\int_{\mathbb{T}}\langle \p_x\rangle^2\p_tvw\mathrm{d}x\\
				=&2\int_{\mathbb{T}}\langle \p_x\rangle^2\Lambda^\gamma vw\mathrm{d}x+2\int_{\mathbb{T}}\langle \p_x\rangle^2\p_x(2v\tilde X+\tilde X^2)w\mathrm{d}x+3\int_{\mathbb{T}}\p_x(2v\tilde X+\tilde X^2)w\mathrm{d}x.
			\end{align*}
			By Parseval formula, H\"older inequality and interpolation inequality, we have
			\begin{equation}\label{eq;est;DP;1}
				\begin{aligned}
					\int_{\mathbb{T}}\langle \p_x\rangle^2\p_x(v\tilde X)w\mathrm{d}x=&\int_{\mathbb{T}}\frac{1+\vert \xi\vert^2}{4+\vert \xi\vert^2}\ml F(\p_x (v\tilde X))\ml F(v)\mathrm{d}x\\
					\leq &\Vert v\Vert_{H^{\frac{\gamma}{2}}}\Vert v\tilde X\Vert_{H^{1-\frac{\gamma}{2}}}\\
					\leq &\Vert v\Vert_{H^\frac{\gamma}{2}}^{1+\frac{1}{\gamma}}\Vert v\Vert_{L^2}^{1-\frac{1}{\gamma}}\Vert \tilde X\Vert_{\ml C^\alpha}\\
					\leq &\frac{\nu}{6} \Vert v\Vert_{H^\frac{\gamma}{2}}^{2}+C_\nu\Vert v\Vert_{L^2}^2 \Vert \tilde X\Vert_{W^\alpha}^{\frac{2\gamma}{\gamma-1}},
				\end{aligned}
			\end{equation}
			where we use the fact $\alpha>{1-\frac{\gamma}{2}}$. Similarly, by Lemma \ref{le;est;paraproduct}, we have
			\begin{align}
				\int_{\mathbb{T}}\langle \p_x\rangle^2\p_x\tilde X^2w\mathrm{d}x\leq&\frac{\nu}{6}  \Vert v\Vert_{H^{\frac{\gamma}{2}}}^2+C_\nu\Vert\tilde X\Vert_{W^\alpha}^4,\label{eq;est;DP;2}\\
				\int_{\mathbb{T}}\p_x(2v\tilde X+\tilde X^2)w\mathrm{d}x\leq& \frac{\nu}{3}  \Vert v\Vert_{H^\frac{\gamma}{2}}^{2}+C_\nu\Vert v\Vert_{L^2}^2 \Vert \tilde X\Vert_{W^\alpha}^{\frac{2\gamma}{\gamma-1}}+C_\nu\Vert \tilde X\Vert_{W^\alpha}^4\label{eq;est;DP;3}.
			\end{align}
			Combining above estimates, we have 
			\begin{equation}\label{eq;est;DP;4}
				\begin{aligned}
					\int_{\m T}n(t)w(t)\mathrm{d}x&+(2-2\nu)\int_0^t \Vert v(t')\Vert_{H^{\frac{\gamma}{2}}}^2\mathrm{d}t'\\
					\leq&\int_{\m T}n(0)w(0)\mathrm{d}x+C_{\nu}\int_0^t\Vert v(t')\Vert_{L^2}^2 \Vert \tilde X(t')\Vert_{W^\alpha}^{\frac{2\gamma}{\gamma-1}}\mathrm{d}t'+C_{\nu}\int_0^t\Vert \tilde X(t')\Vert_{W^\alpha}^4\mathrm{d}t'.
				\end{aligned}
			\end{equation}
			Noting that 
			\begin{equation}
				\Vert v(t)\Vert_{L^2}^2= \Vert \widehat v(t)\Vert_{L^2}^2\leq 4\int_{\m T}\frac{1+\xi^2}{4+\xi^2} \vert \widehat v(t)\vert^2\mathrm{d}x=4\int_{\m T}n(t)w(t)\mathrm{d}x\leq 4\Vert v(t)\Vert_{L^2}^2.
			\end{equation}
			Particularly, $\int_{\m T}n(0)w(0)\mathrm{d}x\leq 4\Vert u_0\Vert_{L^2}^2$, by \eqref{eq;est;DP;4} we have 
			\begin{equation}
				\Vert v(t)\Vert_{L^2}^2+(8-8\nu)\int_0^t \Vert v(t')\Vert_{H^{\frac{\gamma}{2}}}^2\mathrm{d}t'\leq 16\Vert u_0\Vert_{L^2}^2+C_{\nu}\int_0^t\Vert v(t')\Vert_{L^2}^2 \Vert \tilde X(t')\Vert_{W^\alpha}^{\frac{2\gamma}{\gamma-1}}\mathrm{d}t'+C_{\nu}\int_0^t\Vert \tilde X(t')\Vert_{W^\alpha}^4\mathrm{d}t'
			\end{equation} 
			for $\nu <1$ and $t\in[0,T]$. By Gronwall's inequality, we finish our proof.
		\end{proof}
	\end{proposition}
	Similar to the proof of Lemma \ref{le;est;energy;higher}, we can improve the estimate in higher regularity as follows. The proof is omitted.
	\begin{lemma}
		Let $\gamma\in(\frac{3}{2},2]$, $\alpha\in(s+1-\frac{\gamma}{2},\frac{\gamma}{2}-\frac{1}{2})$ , $0<s<\gamma-\frac{3}{2}$, and $0\leq T\leq \bar T<\infty$. Let $v$ be the smooth solution of \eqref{eq;C-M;DP} on $[0,T]$ for $u_0\in H^{s}$ with $\tilde X\in C_{\bar T}W^\alpha$. For $t\in[0,T]$, we have the following energy estimate
		\begin{equation}\label{eq;est;energy;higher;DP}
			\begin{aligned}
				\Vert \Lambda^s v(t)\Vert_{L^2}+2(1-\nu)\int_0^t \Vert \Lambda^s v(t')\Vert_{ H^{\frac{\gamma}{2}}}^2\mathrm{d}t'\leq& \Vert u_0\Vert^2_{L^2}e^{C_\nu \int_0^t \Vert v(t')\Vert_{H^{\frac{\gamma}{2}}}^2\mathrm{d}t'}+\Vert \tilde X\Vert_{C_{\bar T}W^\alpha}^4\int_0^t e^{C_\nu \int_{t'}^t \Vert v(t'')\Vert_{ H^{\frac{\gamma}{2}}}^2\mathrm{d}t''}\mathrm{d}t'\\
				&+\Vert \tilde X\Vert_{C_{\bar T}W^\alpha}^2\int_0^t e^{C_\nu \int_{t'}^t \Vert v(t'')\Vert_{ H^{\frac{\gamma}{2}}}^2\mathrm{d}t''}\Vert v(t')\Vert_{ H^{\frac{\gamma}{2}}}^2\mathrm{d}t'
			\end{aligned}
		\end{equation}
		for some small $\nu<1$. Moreover, $v\in C_TH^s\cap L_T^2H^{s+\frac{\gamma}{2}}$.
	\end{lemma}
	Using a similar argument as in the proof of Theorem \ref{th;global result}, we finish the proof of Remark \ref{rem;result;DP}. 
    
    \section{Result in the range of $\gamma\in(\frac{5}{4},\frac{4}{3}]$}\label{sec;Para-controlled}
	\subsection{Regularity analysis}\label{subsec;critical analysis}
	This section is devoted to isolating a remainder term of Sobolev regularity $\frac{1}{2}+$ under the weaker dissipation condition $\gamma\in(\frac{5}{4},\frac{4}{3}]$. We will achieve this by means of the paracontrolled method, introduced in \cite{gubinelli2015paracontrolled,gubinelli2019singular}, a technique especially well‑suited for analyzing the regularity structure of solutions to singular SPDEs.
	\par Our strategy as follows. Recall the difference equation 
	\begin{align*}
		\p_t v-\Lambda^\gamma v=\ml L(v)=&\ml L(u-X)\\
		=&\p_x(u^2)+\xi-\xi\\
		=&\p_x(v^2)+\p_x(X^2)+2\p_x(vX).
	\end{align*}
	Assume that $v$ gains the critical regularity $H^{\frac{1}{2}+}$. By the critical analysis, $\p_x(v^2)$, $\p_x(X^2)$ and $\p_x(vX)$ must gain $H^{\frac{1}{2}-\gamma+}$-order regularity at least. But it's impossible in the weaker dissipation conditions $\gamma\in(\frac{5}{4},\frac{4}{3}]$  since $\p_x(X^2)$ and $\p_x(vX)$ can gain $\alpha-1$-order regularity at most. To overcome this difficult, our strategy is to decompose such terms in `` higher-regularity-term" and `` lower- regularity-term" by para-product decomposition \eqref{eq;decomposition; VX XX}.  Correspondingly, we write
	\begin{equation}
		v=w+u^\sharp,
	\end{equation} where $\ml Lu^\sharp$ corresponds to the  ``higher-regularity-term", while $\ml L w$ is designed to absorb the `` lower- regularity-term".
	\par  Next, our work is carried out in the framework of the space \(W^s = \mathcal{C}^s \cap H^s\). We introduce a method called \textbf{"para-controlled solution"}, which was developed in \cite{gubinelli2015paracontrolled,gubinelli2017kpz} to denote by $u^\sharp$ the part of $u$ that gains $W^{\frac{1}{2}+}$ regularity. To get a higher order term in $W^{\frac{1}{2}+}$. Let distribution $u^\sharp$ satisfy equation
	\begin{equation}\label{eq; u-sharp}
		\ml Lu^\sharp=\p_x(u^\sharp)^2+R(X,u^\sharp,u),\quad u^{\sharp}(0)=u_0\in W^{\frac{1}{2}+}.
	\end{equation} 
	where $R(X,u^\sharp,u)$ is to be determined. Assume that $R(X,u^\sharp,u)\in W^{\frac{1}{2}-\gamma+}$. Then by regularity theory, $u^\sharp$ belongs to $W^{\frac{1}{2}+}$. Fix $u=X+w+u^\sharp$, then we have
	\begin{align*}
		\ml L(u^\sharp)=&\ml L(u-X-w)\\
		=&\p_x(u^2)+\xi-\xi-\ml Lw\\
		=&\p_x(w^2)+\p_x(X^2)+2\p_x(wX)+2\p_x(wu^\sharp)+2\p_x(Xu^\sharp)+(\p_x((u^\sharp)^2)-\ml Lw).
	\end{align*}
	Since $u^\sharp$ possesses sufficient regularity, the product $u^\sharp\cdot w$ preserve regularity of $w$. The problematic terms are $\p_x(w\cdot X)$ and $\p_x(X^2)$, as they can gain at most $\alpha-1$-order regularity which below the required  $\frac{1}{2}-\gamma+$ in weak dissipation conditions $\gamma\in(\frac{5}{4},\frac{4}{3}]$. Using para-product decomposition \eqref{eq;decomposition; VX XX} and the fact that $X\in C_T\ml C^\alpha$, we observe that $\p_x(X\circ X)$, $\p_x(X\prec w)$ and $\p_x (X\circ w)$ all belong to $H^{\frac{1}{2}-\gamma}$. The main difficulty therefore lies in the terms $\p_x(v\prec X)$ and $\p_x(X\prec X)$.
	\par To handle this, we assume that $v$ possesses a suitable structure that can absorb the problematic terms. Motivated by the paracontrolled ansatz, we write 
	\begin{equation}
		w=u'\pprec\ml Q,\quad u^{\ml Q}:=w+u^\sharp.
	\end{equation} 
	with the assumption $u'\in \ml W^{\frac{1}{8}+}(T)$, $\ml Q\in \ml W^{\frac{3}{8}+}$ and $u^\sharp\in \ml W^{\frac{1}{2}+}(T)$ where $\ml W^s(T)$ is defined in Section \ref{sec;basic tools}. By Lemma \ref{le;est;paraproduct;modifed}, this implies $w\in \ml W^{\frac{3}{8}+}(T)$. For $\gamma>\frac{5}{4}$, the regularity analysis proceeds as follows
	\begin{align*}
		&2[\p_x(X\prec X)-X\prec\p_x X]+\p_x(w^2)+2\p_x(wu^\sharp)+\p_x(X\circ X)\in C_TW^{-\frac{3}{4}+},\quad2\p_x(X\prec u^{\ml Q})\in C_TW^{-\frac{5}{8}+},\\
		&2\p_x(X\circ u^{\ml Q})\in C_TW^{-\frac{1}{2}+},\quad 2[u'\pprec \p_xX-u'\prec \p_xX]\in C_TW^{-\frac{3}{4}+}, \quad u'\pprec\ml L\ml Q-\ml L(u'\pprec \ml Q)\in C_TW^{\frac{1}{2}-\gamma+}.
	\end{align*}
	Therefore, we can determine $R(X,u^\sharp,u)$ as
	\begin{align*}
		R(X,u^\sharp,u):=&\p_x(w^2)+2\p_x(wu^\sharp)+\p_x(X\circ X)+2[\p_x(X\prec X)-X\prec\p_x X]\\
		&+2\p_x(X\prec u^\ml Q+X\circ u^\ml Q+[\p_x( u^\ml Q\prec X)- u^\ml Q\prec \p_xX ] \\
		&+u'\pprec\ml L\ml Q-\ml L(u'\pprec \ml Q)+[u'\prec \p_xX-u'\pprec \p_xX].
	\end{align*}
	Here we see that the purpose of using modified paraproduct operator $f\pprec g$ is to ensure that the commutator $\ml L(u'\pprec \ml Q)-u'\pprec\ml L\ml Q$ aligns with our regularity framework. Let $u'=X+u^\ml Q$, we counter the rough term.
	\begin{align*}
		u'\prec \ml LQ=&2X\prec \p_xX+2u^{\ml Q}\prec\p_x X,
	\end{align*}
	where the last term has higher regularity. By equation \eqref{eq; u-sharp}, we construct a paracontrolled solution if we can verify $u^\sharp$ has enough regularity. In fact, by regularity theory, for $\gamma>\frac{5}{4}$, \eqref{eq; u-sharp} derive the regularity of $u^\sharp$ such as 
	$$u^\sharp\in \ml W^{\frac{1}{2}+\delta}(T),$$
	which we finish our ansatz setting.
	\subsection{Construction of para-controlled solution}
	\par We frame this problem as solving a system of equations of $(u',u^\sharp) $. Given an equation of $(u',u^\sharp)$ driven by $(\tilde X,u_0)$.
	\begin{equation}\label{eq;paracontrolled solution}
		\left\{
		\begin{aligned}
			&u'=2\tilde X+2u^{\ml Q}\\
			&\ml Lu^\sharp=[\p_x(\tilde X^2)-2\tilde X\prec \p_x\tilde X]+\p_x(u^{\ml Q})^2+2[\p_x(u^\ml Q \tilde X)-u^\ml Q\prec \p_x\tilde X]+[u'\prec \p_x\tilde X-\ml L(u'\pprec\ml Q)],
		\end{aligned}
		\right.
	\end{equation}
	with $u^{\sharp}(0)=u_0$. We have the following proposition
	\begin{proposition}\label{prop;local result;1}
		Let $\alpha\in(\frac{1}{8},\frac{1}{6})$, $\gamma\in(\frac{5}{4},\frac{4}{3}]$ and $u_0\in W^{\frac{1}{2}+\delta}$ for $0<\delta\leq \min(\gamma-\frac{5}{4},\alpha-\frac{1}{8})$. If $\tilde X\in \ml W^{\alpha}(\bar T)$ for some $\bar T> 0$, then there exists a unique solution $(u',u^\sharp)$ of equation \eqref{eq;paracontrolled solution}  with positive $T^*$  satisfied $(u',u^\sharp)\in \ml W^{\frac{1}{8}+\delta}(T^*)\times \ml W^{\frac{1}{2}+\delta}(T^*)$.
		\begin{proof}
			Firstly, let $u^\sharp(0)=u_0$ and $f(u^\sharp,u')=u^{\ml Q}=u'\pprec Q+u^\sharp$, we can rewrite \eqref{eq;paracontrolled solution} in mild form
			\begin{align*}
				u'=2\tilde X +2u^{\ml Q},\quad \quad u^\sharp=a+g(u')+\ml B(f(u^\sharp,u'),f(u^\sharp,u'))+\ml G(f(u^\sharp,u')).
			\end{align*}
			and
			\begin{align*}
				\ml L a=P(t)u_0+[\p_x(\tilde X^2)-2\tilde X\prec \p_x\tilde X],&\quad a(0)=0, \\
				\ml L(\ml G(f(u^\sharp,u')))=2[\p_x(u^\ml Q \tilde X)-u^\ml Q\prec \p_x\tilde X],&\quad \ml G(f(u^\sharp,u'))(0)=0,\\
				\ml L{\ml B}(f(u^\sharp,u'),f(u^\sharp,u'))=\p_x(u^{\ml Q})^2,&\quad {\ml B}(f(u^\sharp,u')(0)=0,\\
				\ml L g(u')=[u'\pprec \p_x\tilde X-\ml L(u'\pprec\ml Q)]+[u'\prec \p_x\tilde X-u'\pprec \p_x\tilde X],&\quad g(u')(0)=0.
			\end{align*}
			Define $\ml E_{T}$ as follows
			\begin{align*}
				\ml E_T:=\{(u,v)\in \ml W^{\frac{1}{2}+\delta}(T)\times \ml W^{\frac{1}{8}+\delta}(T): 2\Vert u\Vert_{\ml W^{\frac{1}{2}+\delta}(T)}+\Vert v\Vert_{\ml W^{\frac{1}{8}+\delta}(T)}<\infty\}.
			\end{align*}
			Our goal is to find a $T^*\leq 1\wedge \bar T$ such that the conditions of Lemma \ref{le;Picard th;specific} hold in $\ml E_{T^*}$. Assume that $T\leq 1\wedge \bar T$, by Lemma \ref{le;est;paraproduct;modifed}, for $\epsilon=\frac{1}{8}+\delta$, $\rho=\gamma-\delta$ and $\delta\leq \min(\gamma-\frac{5}{4},\alpha-\frac{1}{8})$, we have
			\begin{align*}
				\Vert f(u^\sharp,u')\Vert_{\ml W^{\frac{1}{8}+\delta}(T)}\leq& \Vert u^\sharp\Vert_{\ml W^{\frac{1}{2}+\delta}(T)}+CT^{\frac{\delta}{\gamma}}\Vert u'\Vert_{\ml W^{\frac{1}{8}+\delta}(T)}(\Vert \ml Q\Vert_{C_T\ml C^{\frac{1}{8}+\delta}}+\Vert \p_x\tilde X\Vert_{C_T\ml C^{\frac{1}{8}-\gamma+2\delta}}),\\
				\leq& \Vert u^\sharp\Vert_{\ml W^{\frac{1}{2}+\delta}(T)}+CT^{\frac{\delta}{\gamma}}\Vert u'\Vert_{\ml W^{\frac{1}{8}+\delta}(T)}\Vert  \tilde X\Vert_{\ml W^{\alpha}(\bar T)}.
			\end{align*}
			and
			\begin{align*}
				\Vert f(u^\sharp,u')\Vert_{\ml W^{\frac{3}{2}-\gamma+\delta}(T)}\leq& \Vert u^\sharp\Vert_{\ml W^{\frac{1}{2}+\delta}(T)}+ CT^{\frac{\delta}{\gamma}}\Vert u'\Vert_{\ml W^{\frac{1}{8}+\delta}(T)}(\Vert \ml Q\Vert_{C_TW^{\frac{3}{2}-\gamma+\delta}}+\Vert \p_x\tilde X\Vert_{\frac{3}{2}-2\gamma+2\delta})\\
				\leq&\Vert u^\sharp\Vert_{\ml W^{\frac{1}{2}+\delta}(T)}+ CT^{\frac{\delta}{\gamma}}\Vert u'\Vert_{\ml W^{\frac{1}{8}+\delta}(T)}\Vert  \tilde X\Vert_{\ml W^{\alpha}(\bar T)}.
			\end{align*}
			Fixing $T$ small enough, we can make sure that $CT^{1-\frac{\delta}{\gamma}}\Vert  \tilde X\Vert_{\ml W^{\frac{1}{8}+\delta}(\bar T)}<\frac{1}{30}$.
			\par For fixed $r=16(1\vee C)\max(\Vert u_0\Vert_{ W^{\frac{1}{2}+\delta}}, \Vert \tilde X\Vert_{\ml W^{\alpha}(\bar T)})$, by Lemma \ref{le;heat flow} and Lemma \ref{le;commutater;D}, we can choose $T\leq 1\wedge \bar T$ small enough such that
			\begin{align*}
				\Vert (\tilde X,a)\Vert_{\ml E_{T}}
				\leq& \Vert \tilde X\Vert_{\ml W^{\alpha}(T)} +2C\Vert u_0\Vert_{ W^{\frac{1}{2}+\delta}}+2CT^{\frac{\delta}{\gamma}}\Vert \ml La\Vert_{ C_TW^{\frac{1}{2}-\gamma+2\delta}}\\
				\leq& \frac{r}{16}+\frac{r}{8}+\frac{r}{16}\leq \frac{r}{4}.
			\end{align*}
			By Lemma \ref{le;commutator;modfied paraproduct, L} and Lemma \ref{le;commutater;D}, we have a similar argument as above. We can choose $T\leq 1\wedge \bar T$ small enough such that
			\begin{align*}
				\Vert g(u')\Vert_{ \ml W^{\frac{1}{2}+\delta}(T)}\leq CT^{\frac{\delta}{\gamma}}\Vert \ml Lg(u')\Vert_{C_TW^{\frac{1}{2}-\gamma+2\delta}}\leq C& T^{\frac{\delta}{\gamma}}\Vert u'\Vert_{\ml W^{\frac{1}{8}+\delta}(T)}(\Vert \ml Q\Vert_{C_{{T}}W^{\frac{3}{8}+\delta}}+\Vert \p_x\tilde  X\Vert_{C_TW^{\frac{3}{8}-\gamma+\delta}})\\
				\leq &C T^{\frac{\delta}{\gamma}}\Vert u'\Vert_{\ml W^{\frac{1}{8}+\delta}(T)}\Vert \tilde X\Vert_{C_{\bar T}W^{\alpha}}
				\leq \frac{1}{16}\Vert u'\Vert_{\ml W^{\frac{1}{8}+\delta}(T)}.
			\end{align*}
			and 
			\begin{align*}
				\Vert \ml G( f(u^\sharp,u') )\Vert_{\ml W^{\frac{1}{2}+\delta}(T)}\leq &C T^{\frac{\delta}{\gamma}}\Vert \ml L\ml G( f(u^\sharp,u'))\Vert_{C_T W^{\frac{1}{2}-\gamma+2\delta}} \\
				\leq&CT^{\frac{\delta}{\gamma}}\Vert f(u^\sharp,u')\Vert_{\ml W^{\frac{1}{8}+\delta}}\Vert \tilde X\Vert_{\ml W^{\alpha}(\bar T)}\\
				\leq &\frac{1}{16}\Vert f(u^\sharp,u')\Vert_{\ml W^{\frac{1}{8}+\delta}}.
			\end{align*}
			where we use the fact $\gamma>\frac{3}{2}$. Finally, let $X=\ml W^{\frac{3}{2}-\gamma+\delta}(T)$, we have $\ml W^{\frac12+\delta}(T)\subset\ml W^{\frac{3}{2}-\gamma+\delta}(T)\subset\ml W^{\frac18+\delta}(T)$, Choosing $T$ small enough such that 
			\begin{align*}
				\Vert \ml B(f(u^\sharp,u'),f(u^\sharp,u'))\Vert_{\ml W^{\frac{1}{2}+\delta}(T)}\leq& C T^{\frac{\delta}{\gamma}}\Vert \ml L\ml B( f(u^\sharp,u') )\Vert_{C_T W^{\frac{1}{2}-\gamma+2\delta}} \\
				\leq &CT^{\frac{\delta}{\gamma}}\Vert \p_x( f(u^\sharp,u')^2 )\Vert_{C_T W^{\frac{1}{2}-\gamma+\delta}}\\
				\leq &C T^{\frac{\delta}{\gamma}}\Vert f(u^\sharp,u')\Vert^2_{C_TW^{\frac{3}{2}-\gamma+\delta}}\\
				\leq &\frac{1}{16}\Vert f(u^\sharp,u')\Vert^2_{\ml W^{\frac{3}{2}-\gamma+\delta}(T)}.
			\end{align*}
			Setting $T^*$ to satisfy all the conditions above, by Lemma \ref{le;Picard th;specific}, there exists $(u^\sharp,u')\in\ml E_{T^*}$ for which we prove our proposition.
		\end{proof}
	\end{proposition}
	\begin{proof}[\textbf{Proof of Theorem \ref{th;paracontrolled}}]
		Let $\tilde X(t)=X(t)$, where $X(t)$ satisfies equation \eqref{eq;linear evolution;noise}. By Remark \ref{rem;regularity}, we have $\tilde X\in \ml W^{\alpha}({\bar T})$ for any fixed $\bar T>0$ and $\alpha\in (\frac{1}{8},\frac{\gamma}{2}-\frac{1}{2})$. Then by Proposition \ref{prop;local result;1}, there exists $(u',u^\sharp)\in C_{T^*}W^{\frac{1}{8}+\delta}\times C_{T^*}W^{\frac{1}{2}+\delta}$ satisfying \eqref{eq;paracontrolled solution} for $\delta\leq \min(\gamma-\frac{5}{4},\alpha-\frac{1}{8})<\frac{\gamma}{2}-\frac{5}{8}$ and $0<T^*\leq\bar T$. It's not difficult to check that $u=X+u'\pprec \ml Q+u^{\sharp}$ is the solution of \eqref{eq;Burgers;white noise} by the analysis in Section \ref{subsec;critical analysis}. Since $u\in C_{T^*}W^{\alpha}$, we finish the proof of Theorem \ref{th;paracontrolled}.
	\end{proof}
	\section{ Discussion on the range of $\gamma$}\label{sec;conjecture}
	In the final section, we propose a conjecture regarding the admissible range of the dissipation parameter $\gamma$ for the rough or singular Burgers equation driven by $\vert D\vert^{\beta}\xi$ by the method in \cite{deng2022random}.
	\begin{equation}\label{eq;Burgers;white noise;singular}
		u_t-\Lambda^\gamma u=\p_x(u^2)+\vert D\vert^{\beta}\xi,
	\end{equation}
	where $\beta\geq0$. For $\zeta=\vert D\vert^{\beta}\xi$, we fix a dyadic number $N\neq0 $, and set $\zeta^N:=\ml F^{-1}(1_{\vert k\vert\sim N}\ml F\xi(k))$, which means that the frequency of $\xi$ is focus on $\frac{1}{2}N\leq \vert k\vert\leq N$. Then the Fourier mode of the linear evolution $X:=\int_{0}^te^{ (t-s)\Lambda^\gamma}\zeta^N\mathrm{d}s$ can be written as 
	$$X=N^{\beta-\frac{\gamma}{2}}\sum_{\vert k\vert\sim N}G_k(t)e^{ikx},\quad G_k(t)=N^{\frac{\gamma}{2}}\int_0^{t}e^{ -(t-s)\vert k\vert^\gamma}\xi_k(s)\mathrm{d}s,$$
	It's not difficult to calculate that $G_k(t)$ from a collection of independent Gaussian variable with $\m E[\vert G_k(t)\vert^2]\sim 1$, then we can get $X$ belong to $C_TH^{\frac{\gamma}{2}-\beta-\frac{1}{2}}$. Now we calculate the \textbf{"first nonlinear iteration"}. For $u^{(1)}(t):=\int_{0}^{t}e^{ (t-s)\Lambda^\gamma}\p_x(X^2)\mathrm{d}s$, and let $u_k^{(1)}(t)$ be its Fourier mode
	\begin{align*}
		u_k^{(1)}(t)=N^{2\beta-\gamma+1}\sum_{\substack{l,m\in \mathbb{Z},\vert l\vert\sim\vert m\vert\sim \vert k\vert\sim \vert N\vert\\l+m=k}} \int_0^t e^{-(t-s)\vert k\vert^\gamma}G_{l}(s)G_{m}(s)\mathrm{d}s.
	\end{align*}
	Since the heat integral always provide $N^{-\gamma}$and the sum only has size $N^{1}$, then the inner sum integral has size $N^{\frac12-\gamma}$ with high probability by square root cancellation, then we get the ansatz that
	\begin{equation}\label{eq;ansatz;critical index}
		\beta\leq\frac32\gamma-\frac32.
	\end{equation}
	\par We call a pair $(\beta,\gamma)$ \textbf{``probabilistic admissible pair"} for the Rough or singular Burgers equation on $H^s$ space(In the $\ml C^s$ space, it will be different), if condition \eqref{eq;ansatz;critical index} holds. It is immediate to verify that the pair $(0,\gamma)$ for $\gamma>\frac{4}{3}$ and $(\frac{1}{2},\frac{3}{2})$ are probabilistic admissible pairs; the former regime is treated in the present paper, while the latter will be our future directions.
	\subsection{Future directions}
	In future work, we intend to extend our analysis to more singular regimes of the stochastic Burgers equation. A primary example is the parameter choice 
	$\gamma\geq\frac{3}{2}$ and $\beta=\frac{1}{2}$ in \eqref{eq;Burgers;white noise;singular}. The solution $u$ inherits the spatial regularity of the stochastic convolution $X(t)=\int_0^tP(t-s)\xi(s)\mathrm{d}s$. According to the Lemma \ref{le;regularity}, we have a priori $u\in W^{-\frac{1}{2}-\delta}$ for any $\delta>0$. At such low regularity, the nonlinear term $\p_x(u^2)$ may become ill-defined, as the assumptions required for Lemma \ref{le;est;paraproduct} no longer hold. A standard way to circumvent this difficulty is to work with generalized solutions. More precisely, we consider a regularized solution $u^\epsilon$ satisfying
	\begin{equation}\label{eq;Burgers;app}
		\p_t u^\epsilon-\Lambda^\gamma u^\epsilon=D(u^\epsilon)^2+\vert D\vert^\beta\xi_\epsilon,\quad u^\epsilon(0)=u^\epsilon_0+Y_\epsilon(0),
	\end{equation} 
	and prove that $u^\epsilon$ converges in probability to a limiting process $u$. This approach was originally introduced by Martin Hairer in his analysis of the KPZ equation \cite{hairer2013solving}. In his subsequent foundational work \cite{hairerTheoryRegularityStructures2014}, he developed the comprehensive framework of regularity structures, which provides a systematic way to define solutions for a broad class of singular stochastic partial differential equations—contributions that were recognized with the Fields Medal. Within that framework, the solution is constructed in a space of modelled distributions $\ml D^{\gamma,\eta}_P$ over $[0,T]$ and satisfies the fixed-point equation
	\begin{equation}
		u=(\ml K_{\bar\Gamma}+R_\gamma\ml R)\textbf{R}^+F(u)+Gu_0.
	\end{equation} 
	\appendix
	\section{Estimate for heat flow}
	In this chapter, we give some estimates for heat flow. Defining operator $P(t)$ as
	\begin{equation}\label{eq;linear solution}
		\ml F(P(t)f)=e^{-t\vert \xi\vert^\gamma}\ml Ff,
	\end{equation}
	for $f\in\ml S'(\mathbb{R}^d)$, it's not difficult to see that $P(t)f$ is the solution of
	\begin{equation}
		\p_t P(t)f-\Lambda^\gamma P(t)f=0,\quad u(0)=f.
	\end{equation}
	For the operator, we firstly claim that it's the linear bounded operator in $L^p$ for $p\geq1$. In fact, setting $\varphi(z)=e^{-\vert z\vert^\gamma}$, then $P(t)$ can be written as $\varphi(t^{\frac{1}{\gamma}}D)$. Noting that $\varphi\in L^1$, it suffice to show $\ml F\varphi\in L^1$. Then for any $t\geq0$, we have 
	\begin{equation}\label{eq;P(t);Bounded}
		\Vert P(t)f\Vert_{L^p}=\Vert \ml F^{-1}\varphi(t^{\frac{1}{\gamma}}\cdot)\ast f\Vert_{L^p}\lesssim \Vert f\Vert_{L^p}.
	\end{equation}
	We can easily get $\ml F\varphi$ belongs to $L^1$ since its the density of a symmetric $\gamma$-stable random variable for $\gamma\in(0,2]$.
	\begin{lemma}\label{le;Schauder estimate;block}
		For $\gamma>0$, $\lambda\in \m Z/\{0\}$, we have the estimate 
		$$\Vert P(t)f\Vert_{L^p}\leq Ce^{-ct\lambda^\gamma}\Vert f\Vert_{L^p},$$
		if the support of $\hat f$ belongs to some $\lambda \ml C$, where $\ml C$ is a fixed annual.
		\begin{proof}
			The proof is similar to Lemma 2.4 in \cite{bahouriFourierAnalysisNonlinear2011}, which only need to notice that $\p_x^\beta e^{-t\vert \xi\vert^\gamma}\leq C(1+t)^{\vert \beta\vert}e^{-ct}$ for $\xi\in\lambda \ml C$ will not cause difficult when $\gamma>0$, since $ \vert \xi\vert$ has a uniformly lower bound for $\lambda\in \m Z/\{0\}$.
		\end{proof}
	\end{lemma}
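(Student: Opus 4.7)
The plan is to represent $P(t)f$ as a convolution against a kernel and then use a Young-type inequality, with the decay factor $e^{-ct\lambda^\gamma}$ coming from a pointwise bound on the rescaled kernel. First, since $\hat f$ is supported in the annulus $\lambda\mathcal C$, I would fix once and for all a smooth cutoff $\chi\in\mathcal D(\mathbb R^d)$ which is radial, equals $1$ on $\mathcal C$, and is supported in a slightly larger annulus $\widetilde{\mathcal C}$ bounded away from $0$. Then $\hat f(\xi)=\chi(\xi/\lambda)\hat f(\xi)$ and consequently
\begin{equation*}
P(t)f = h_{t,\lambda}\ast f,\qquad h_{t,\lambda}(x):=\mathcal F^{-1}\!\bigl(e^{-t|\xi|^\gamma}\chi(\xi/\lambda)\bigr)(x).
\end{equation*}
Young's inequality then gives $\|P(t)f\|_{L^p}\le \|h_{t,\lambda}\|_{L^1}\|f\|_{L^p}$, so the whole lemma reduces to showing $\|h_{t,\lambda}\|_{L^1}\le Ce^{-ct\lambda^\gamma}$.

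Second, I would scale out $\lambda$ by the change of variables $\xi=\lambda\eta$, which yields $h_{t,\lambda}(x)=\lambda^d g_{s}(\lambda x)$ with $s:=t\lambda^\gamma$ and
\begin{equation*}
g_s(y) \;=\; (2\pi)^{-d}\!\int_{\widetilde{\mathcal C}} e^{iy\cdot\eta}\,e^{-s|\eta|^\gamma}\chi(\eta)\,d\eta.
\end{equation*}
Since $\|h_{t,\lambda}\|_{L^1}=\|g_s\|_{L^1}$, it suffices to prove $\|g_s\|_{L^1}\le Ce^{-cs}$ uniformly for $s\ge 0$ (the case $s\le 1$ is trivial since $\|g_s\|_{L^1}$ is continuous in $s$, so the real content is $s\ge 1$).

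Third, I would obtain pointwise decay of $g_s(y)$ in $y$ by integration by parts in $\eta$. For any multi-index $\alpha$,
\begin{equation*}
y^\alpha g_s(y)=(-i)^{|\alpha|}(2\pi)^{-d}\int e^{iy\cdot\eta}\,\partial_\eta^\alpha\!\bigl(e^{-s|\eta|^\gamma}\chi(\eta)\bigr)d\eta.
\end{equation*}
A short induction shows that on $\widetilde{\mathcal C}$ one has $|\partial_\eta^\alpha e^{-s|\eta|^\gamma}|\le C_\alpha(1+s)^{|\alpha|}e^{-s|\eta|^\gamma}$, and because $\widetilde{\mathcal C}$ is bounded away from the origin there is $c_0>0$ with $|\eta|^\gamma\ge c_0$ on $\widetilde{\mathcal C}$. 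Combining with Leibniz and the compactness of $\operatorname{supp}\chi$,
\begin{equation*}
|y^\alpha g_s(y)|\le C_\alpha (1+s)^{|\alpha|}e^{-c_0 s}.
\end{equation*}
Choosing $|\alpha|=0$ and $|\alpha|=d+1$ and combining gives $|g_s(y)|\le C(1+s)^{d+1}(1+|y|)^{-(d+1)}e^{-c_0 s}$, whence $\|g_s\|_{L^1}\le C(1+s)^{d+1}e^{-c_0 s}\le Ce^{-cs}$ for any $0<c<c_0$. Un-scaling yields $\|h_{t,\lambda}\|_{L^1}\le Ce^{-ct\lambda^\gamma}$, and Young's inequality finishes the proof.

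The only delicate step is the kernel bound in the third paragraph, because differentiating $e^{-s|\eta|^\gamma}$ for non-integer $\gamma$ produces factors of $|\eta|^{\gamma-k}$ that are singular at the origin; this is precisely why the support condition $\widetilde{\mathcal C}\subset\{|\eta|\ge 3/4\}$ is essential and why the bound fails for the low-frequency block $\Delta_{-1}$ (as the paper's remark about the proof being analogous to \cite{bahouriFourierAnalysisNonlinear2011} points out, this is exactly where the condition $\gamma>0$ enters). Once the support is bounded away from zero the polynomial factors $(1+s)^{|\alpha|}$ are harmless, being swallowed by $e^{-c_0 s}$.
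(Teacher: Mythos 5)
Your proposal is correct and follows essentially the same route as the paper, which simply defers to Lemma 2.4 of \cite{bahouriFourierAnalysisNonlinear2011} (cutoff on the annulus, convolution kernel, rescaling, integration by parts) and notes that the only point requiring care for fractional $\gamma$ is the derivative bound $\vert\partial_\xi^\beta e^{-t\vert\xi\vert^\gamma}\vert\lesssim (1+t)^{\vert\beta\vert}e^{-ct}$ on the annulus, which holds because $\xi$ stays away from the origin. Your third paragraph makes exactly this observation explicit, so the argument matches the paper's intended proof.
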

	For Besov space $B_{p,r}^s$, we have the following result.
	\begin{lemma}\label{le;heat flow}
		Let $1\leq p,r\leq\infty$, $\alpha\in \m R$ and $\gamma>0$, then for $t\geq0$, we have the following estimate
		\begin{equation}\label{eq;est;heat flow;1}
			\Vert P(t)f\Vert_{B_{p,r}^\alpha}\lesssim \Vert f\Vert_{B_{p,r}^\alpha}.
		\end{equation}
		Let $\delta\geq0$ and $0<t\leq 1$, we have 
		\begin{equation}\label{eq;est;heat flow;2}
			\Vert P(t)f\Vert_{B_{p,r}^{\alpha+\delta}}\leq t^{-\frac{\delta}{\gamma}}\Vert f\Vert_{B_{p,r}^{\alpha}}.
		\end{equation}
		Moreover, fix $f(0),\ml L f:=(\p_t-\Lambda^\gamma)f\in B_{p,r}^{\alpha}$ , then for $\delta\in[0,\gamma)$ and $t\leq 1$, we have estimate
		\begin{equation}\label{eq;est;heat flow;3}
			\Vert f\Vert_{B_{p,r}^{\alpha+\delta}}\lesssim \Vert f(0)\Vert_{B_{p,r}^{\alpha+\delta}}+t^{1-\frac{\delta}{\gamma}}\Vert \ml L f\Vert_{C_tB_{p,r}^{\alpha}}.
		\end{equation}
		\begin{proof}
			For fixed $j\geq 0$, by Lemma \ref{le;Schauder estimate;block}, for $t>0$, we have
			\begin{align*}
				\Vert P(t)\Delta_jf(t)\Vert_{L^p}\lesssim& Ce^{-ct2^{j\gamma}}\Vert \Delta_jf\Vert_{L^p}\\
				\lesssim& t^{-\frac{\delta}{\gamma}}2^{-j\delta}\Vert \Delta_jf\Vert_{L^p}
			\end{align*}
			For $j=-1$, by \eqref{eq;P(t);Bounded} and $t\leq 1$, we have 
			\begin{align*}
				\Vert P(t)\Delta_{j}f(t)\Vert_{L^p}\lesssim& \Vert \Delta_{j}f(t)\Vert_{L^p}\\
				\lesssim& (2^{-\delta}t^{\frac{\delta}{\gamma}})t^{-\frac{\delta}{\gamma}}2^{\delta}\Vert \Delta_{j}f(t)\Vert_{L^p}\\
				\lesssim &t^{-\frac{\delta}{\gamma}}2^{-j\delta}\Vert \Delta_jf\Vert_{L^p}.
			\end{align*}
			Multiplying $2^{j(\alpha+\delta)}$ both sides and taking the $l^r$-norm with respect to $-1\leq j$, we prove the second result.
			\par For \eqref{eq;est;heat flow;3}, by \eqref{eq;est;heat flow;1} and \eqref{eq;est;heat flow;2}
			\begin{align*}
				\Vert f\Vert_{B_{p,r}^{\alpha+\delta}}\leq& \Vert P(t)f(0)\Vert_{B_{p,r}^{\alpha+\delta}}+\int_0^t\Vert  P(t-s)\ml Lf(s)\Vert_{B_{p,r}^{\alpha+\delta}}\mathrm{d}s\\
				\lesssim&\Vert f(0)\Vert_{B_{p,r}^{\alpha+\delta}}+\int_0^t\Vert (t-s)^{-\frac{\delta}{\gamma}}\ml Lf(s)\Vert_{B_{p,r}^{\alpha}}\mathrm{d}s\\
				\lesssim& \Vert f(0)\Vert_{B_{p,r}^{\alpha+\delta}}+t^{1-\frac{\delta}{\gamma}}\Vert \ml Lf\Vert_{C_tB_{p,r}^{\alpha}}
			\end{align*}
			where we use $\delta<\gamma$.
		\end{proof}
	\end{lemma}
	A similar result carries over to the space $C_T^\alpha L^2$ and $C_T^\alpha L^\infty$, as stated in the following lemma.
	\begin{lemma}\label{le;heat flow;time smoothing}
		Let $\gamma>0$ and $\alpha\in\m R_+$, for any $T\geq0$ we have the following estimate
		\begin{align*}
			\Vert P(\cdot)f\Vert_{\ml W^\alpha_{\infty}(T)}\lesssim \Vert f\Vert_{\ml C^\alpha},\quad 	\Vert P(\cdot)f\Vert_{\ml W^\alpha_{2}(T)}\lesssim \Vert f\Vert_{H^\alpha}.
		\end{align*}
		Moreover, let $\alpha\in(0,\gamma)$, $\delta\in[0,\gamma)$ satisfy $0<\alpha+\delta<\gamma$ and $0\leq T\leq 1$, it has the estimate
		\begin{align*}
			\Vert f\Vert_{\ml W_\infty^{\alpha+\delta}(T)}\lesssim \Vert f(0)\Vert_{W_\infty ^{\alpha+\delta}}+T^{1-\frac{\delta}{\gamma}}\Vert \ml L f\Vert_{C_TW_\infty^{\alpha}},\quad \Vert f\Vert_{\ml W_2^{\alpha+\delta}(T)}\lesssim \Vert f(0)\Vert_{W_2^{\alpha+\delta}}+T^{1-\frac{\delta}{\gamma}}\Vert \ml L f\Vert_{C_TW_2^{\alpha}}.
		\end{align*}
		\begin{proof}
			We only proof the case of $\ml W_2^\alpha(T)$. The case of $\ml W_\infty^{\alpha+\delta}(T)$ can be refer to Lemma 2.9 in \cite{gubinelli2017kpz}. The estimate for $C_TH^{\alpha}$ and $C_TH^{\alpha+\delta}$  is a direct consequence of Lemma \ref{le;heat flow}. For $C_T^{\frac{\alpha}{\gamma}}L^2$, by Parsavel formula, we have
			\begin{align*}
				\Vert P(\cdot)f\Vert_{C_T^{\frac{\alpha}{\gamma}}L^2}=&\sup_{0\leq s<t\leq T}\frac{\Vert (P(t)-P(s))f\Vert_{L^2}}{\vert t-s\vert^{\frac{\alpha}{\gamma}}}\\
				\leq &\sup_{0\leq s<t\leq T}\frac{\Vert e^{-t\vert k\vert^\gamma}(1-e^{-(t-s)\vert k\vert^\gamma})\hat f(k)\Vert_{L^2}}{\vert t-s\vert^{\frac{\alpha}{\gamma}}}
				\leq \Vert f\Vert_{H^{\alpha}},
			\end{align*}
			for $\alpha\in\m R_+$, which we prove the first estimate. Note that
			\begin{align*}
				\Vert f(t)-f(s)\Vert_{L^2}\leq \int_s^t\Vert P(t-\tau)\ml Lf(\tau)\Vert_{L^2}\mathrm{d}\tau+\int_0^s\Vert [P(t-\tau)-P(s-\tau)]\ml Lf(\tau)\Vert_{L^2}\mathrm{d}\tau.
			\end{align*}
			The first term can be handled using a method similar to that in Lemma \ref{le;heat flow},  in fact
			\begin{align*}
				\int_s^t\Vert P(t-\tau)\ml Lf(\tau)\Vert_{L^2}\mathrm{d}\tau\lesssim&\Vert \ml Lf\Vert_{C_TH^\alpha}\int_s^te^{-(t-\tau)\vert k\vert^\gamma}\vert k\vert^{-\alpha}\mathrm{d}\tau\\
				\lesssim&\Vert \ml Lf\Vert_{C_TH^\alpha} (t-s)^{1+\frac{\alpha}{\gamma}}.
			\end{align*}
			For the second term, we have estimate
			\begin{align*}
				\int_0^s\Vert [P(t-\tau)-P(s-\tau)]\ml Lf(\tau)\Vert_{L^2}\mathrm{d}\tau\lesssim&\Vert \ml Lf\Vert_{C_TH^\alpha}\int_0^se^{-(s-\tau)\vert k\vert^\gamma}(e^{-(t-s)\vert k\vert^\gamma}-1)\vert k\vert^{-\alpha}\mathrm{d}s\\
				\lesssim&\Vert \ml Lf\Vert_{C_TH^\alpha}(t-s)^{\frac{\alpha+\delta}{\gamma}}s^{1-\frac{\delta}{\gamma}}.
			\end{align*}
			Combining the above estimate, by the definition of $C_T^{\frac{\alpha+\delta}{\gamma}}L^2$, we have  
			\begin{align*}
				\Vert f\Vert_{\ml W_2^{\alpha+\delta}(T)}=&\sup_{0\leq s<t\leq T}\frac{\Vert f(t)- f(s)\Vert_{L^2}}{\vert t-s\vert^{\frac{\alpha+\delta}{\gamma}}}\\
				\leq &T^{1-\frac{\delta}{\gamma}}\Vert \ml Lf\Vert_{C_TH^\alpha}.
			\end{align*}
			Then we finish the proof.
		\end{proof}
	\end{lemma}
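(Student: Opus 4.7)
The plan is to treat the two components of the norm $\mathcal{W}_p^{\alpha}(T) = C_T B^{\alpha}_{p,p}\cap C^{\alpha/\gamma}_T L^p$ separately. The spatial Besov part is an immediate consequence of Lemma \ref{le;heat flow}, so the real work is controlling the time Hölder seminorm. I will do $p=2$ first via Parseval, then recover $p=\infty$ from block-wise estimates using Lemma \ref{le;Schauder estimate;block}.

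For the homogeneous bound, pass to the Fourier side. For $p=2$, write
\begin{align*}
\|P(t)f - P(s)f\|_{L^2}^2 = \sum_{k} e^{-2s|k|^{\gamma}}\bigl(1-e^{-(t-s)|k|^{\gamma}}\bigr)^2 |\hat f(k)|^2,
\end{align*}
and use the elementary inequality $|1-e^{-\tau}|\lesssim \tau^{\alpha/\gamma}$ valid for $\tau\geq 0$ and $\alpha/\gamma\in[0,1]$. This yields $|1-e^{-(t-s)|k|^{\gamma}}|\lesssim (t-s)^{\alpha/\gamma}|k|^{\alpha}$, and so $\|P(t)f-P(s)f\|_{L^2}\lesssim (t-s)^{\alpha/\gamma}\|f\|_{H^{\alpha}}$. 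For $p=\infty$, one performs the same argument on each Littlewood–Paley block: $\Delta_j(P(t)f - P(s)f)$ has Fourier support in a dyadic annulus of size $2^j$, so combining Lemma \ref{le;Schauder estimate;block} with the same scalar inequality gives $\|\Delta_j(P(t)f-P(s)f)\|_{L^{\infty}}\lesssim (t-s)^{\alpha/\gamma} 2^{j\alpha}\|\Delta_j f\|_{L^{\infty}}$, and then one takes $\sup_j$ to conclude.

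For the inhomogeneous bound, apply Duhamel:
\begin{align*}
f(t) = P(t)f(0) + \int_0^t P(t-\tau)\mathcal{L}f(\tau)\,d\tau.
\end{align*}
The $P(t)f(0)$ term is handled by the homogeneous estimate just proved, and the spatial part of the Duhamel integral is controlled by integrating the gain factor $(t-\tau)^{-\delta/\gamma}$ from Lemma \ref{le;heat flow}, which produces the prefactor $T^{1-\delta/\gamma}$ after integration (here one uses $\delta<\gamma$). For the time Hölder part, split
\begin{align*}
f(t)-f(s) = \int_s^t P(t-\tau)\mathcal{L}f(\tau)\,d\tau + \int_0^s \bigl[P(t-\tau)-P(s-\tau)\bigr]\mathcal{L}f(\tau)\,d\tau.
\end{align*}
The first integral is bounded by $\|\mathcal{L}f\|_{C_TW_p^{\alpha}}\int_s^t(t-\tau)^{-\delta/\gamma}d\tau \lesssim \|\mathcal{L}f\|_{C_TW_p^{\alpha}}(t-s)^{1-\delta/\gamma}$, which gives more than the required $(t-s)^{(\alpha+\delta)/\gamma}$ provided $T\leq 1$ and $\alpha+\delta<\gamma$. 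For the second integral, one applies the same scalar estimate $|e^{-(t-s)|k|^{\gamma}}-1|\lesssim (t-s)^{(\alpha+\delta)/\gamma}|k|^{\alpha+\delta}$ to the difference of the semigroups, then integrates the resulting factor $e^{-(s-\tau)|k|^{\gamma}}$ in $\tau$ to generate the prefactor $s^{1-\delta/\gamma}\leq T^{1-\delta/\gamma}$.

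The main obstacle I expect is bookkeeping the exponents so that the singular factor $(t-\tau)^{-\delta/\gamma}$ splits cleanly into one piece contributing $(t-s)^{(\alpha+\delta)/\gamma}$ (for the Hölder seminorm) and another piece contributing $T^{1-\delta/\gamma}$ (as the overall constant); this is precisely why the assumption $\alpha+\delta<\gamma$ appears. Once the scalar multiplier estimate $|e^{-a}-e^{-b}|\lesssim |a-b|^{\theta}e^{-c\min(a,b)}$ is used with the right exponent $\theta$ in each step, the dyadic $p=\infty$ version follows from the $p=2$ argument with only notational changes by replacing Parseval with Lemma \ref{le;Schauder estimate;block} on each block.
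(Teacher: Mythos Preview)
Your proposal is correct and follows essentially the same route as the paper: reduce the spatial Besov component to Lemma~\ref{le;heat flow}, control the time-H\"older seminorm of $P(\cdot)f$ via Parseval and the scalar bound $|1-e^{-\tau}|\lesssim\tau^{\alpha/\gamma}$, and handle the inhomogeneous estimate by Duhamel with the same two-integral splitting of $f(t)-f(s)$. The only difference is that the paper writes out $p=2$ and cites \cite{gubinelli2017kpz} for $p=\infty$, while you sketch the block-wise $L^\infty$ argument directly; one small bookkeeping point is that your claimed exponent $(t-s)^{1-\delta/\gamma}$ for the first integral does not dominate $(t-s)^{(\alpha+\delta)/\gamma}$ under the hypothesis $\alpha+\delta<\gamma$ alone (that would need $\alpha+2\delta\le\gamma$), but the trivial bound $\|P(t-\tau)\mathcal Lf(\tau)\|_{L^p}\le\|\mathcal Lf\|_{C_TW_p^\alpha}$ giving $(t-s)$ fixes this immediately.
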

	\par There are also some similar estimates for $\tilde P(t)$, which is defined as
	$$\ml F(\tilde P(t)(f))=\frac{e^{-t\vert \xi\vert^\gamma}}{1+\vert \xi\vert^2}\ml Ff.$$
	To clear that, fix $\varphi(t,z)=\frac{e^{-\vert z\vert^\gamma}}{1+t^{-\frac{2}{\gamma}}\vert z\vert^2}.$ We have $\tilde P(t)f=\varphi(t,t^\frac{1}{\gamma}D)f=\tilde G(t,x)f$. For fixed $t\in[0,T]$, it's easy to see that $\ml F^{-1}\varphi(t,z)\in L^1(\mathbb{R})$. In fact, by Young's inequality and \eqref{eq;linear solution}
	\begin{align*}
		\Vert \ml F^{-1}_z\varphi(t,z)(\xi)\Vert_{L^1}=&\Vert\ml F^{-1}{e^{-\vert z\vert^\gamma}}\ast\ml F(\frac{1}{1+t^{-\frac{2}{\gamma}}\vert z\vert^2}) \Vert_{L^1}\\
		\leq& \Vert \check{g}\Vert_{L^1}\Vert \ml F(\frac{1}{1+t^{-\frac{2}{\gamma}}\vert z\vert^2})\Vert_{L^1}
	\end{align*}
	the first integral is always finite. For the second term, by $\ml F^{-1}_z{\frac{1}{1+\vert z\vert^2}}=\pi e^{-\vert \xi\vert}$, we have $\ml F(\frac{1}{1+t^{-\frac{2}{\gamma}}\vert z\vert^2})=\pi t^\frac{1}{\gamma}e^{-t^\frac{1}{\gamma}\vert \xi\vert}$ which also in $L^1$, 
	which obtain the following claim
	\begin{align*}
		\Vert \tilde P(t)f\Vert_{B_{p,r}^\alpha}\lesssim \Vert f\Vert_{B_{p,r}^\alpha}.
	\end{align*}
	On the other hand, for $j\geq 1$, we have the estimate
	\begin{align*}
		\Vert (\ml F^{-1}\varphi(t,t^\frac{1}{\gamma}z))\ast\Delta_j u\Vert_{L^p}\lesssim&\Vert \ml F^{-1}(\varphi(t,t^\frac{1}{\gamma}z)\psi(2^{-j}z))\Vert_{L^1}\Vert \Delta_ju\Vert_{L^p}\\
		\lesssim&\Vert \ml F^{-1}(\varphi(t,t^\frac{1}{\gamma}2^jz)\psi)\Vert_{L^1}\Vert \Delta_ju\Vert_{L^p}\\
		\lesssim&\Vert (1+\vert x\vert)^{d+1} \ml F^{-1}(\varphi(t,t^\frac{1}{\gamma}2^jz)\psi)\Vert_{L^\infty}\Vert \Delta_ju\Vert_{L^p}\\
		\lesssim&\Vert (1+\vert D\vert)^{d+1}\varphi(t,t^\frac{1}{\gamma}2^jz)\psi\Vert_{L^1}\Vert \Delta_ju\Vert_{L^p}\\
		\lesssim&(1+2^jt^\frac{1}{\gamma})^{d+1}\max_{\mu\in \mathbb{N}^d:\vert \mu\vert\leq d+1}\Vert \partial^\mu\varphi(t,t^\frac{1}{\gamma}2^j\cdot)\Vert_{L^\infty(\mathrm{supp}(\psi))}\Vert \Delta_j u\Vert_{L^p}.
	\end{align*}
	By Lebnitz'rule $\partial^2(uv)=\sum_{\mu_1+\mu_2=2}\frac{\alpha!}{\mu_1!\mu_2!}\partial^{\mu_1}u\partial^{\mu_2}v$, we have
	$$\partial^2 \varphi(t,z)=\sum_{\mu_1+\mu_2=2}\frac{\alpha!}{\mu_1!\mu_2!}\partial^{\mu_1}(e^{-\vert z\vert^\gamma})\partial^{\mu_2}(\frac{1}{1+t^{-\frac{2}{\gamma}}\vert z\vert^2})=c_1u''v+c_2u'v'+c_3uv'',$$
	where $u(z)=e^{-\vert z\vert^\gamma}$ and $v(z),v'(z),v''(z)$ satisfied
	$$v:=\frac{1}{1+cz^2},\quad v':=\frac{-2cz}{(1+cz^2)^2},\quad v''=\frac{6c^2z^2-2c}{(1+cz^2)^3}.$$
	
	Since $cz^2>0$, it's obviously that $\vert v\vert,\vert v'\vert,\vert v''\vert\lesssim 1$ which shows that 
	\begin{align*}
		\sup_{\vert \mu\vert\leq 2}\sup_{z\geq1}(1+\vert z\vert)^{\delta+2}\vert \partial_z^\mu\varphi(t,z)\vert\lesssim 1.
	\end{align*}
	On the other hand, since all partial derivatives of $u=e^{-\vert z\vert^\gamma}$ is decay faster than rational function for $\gamma>0$, we have
	\begin{align*}
		\sup_{\vert \mu\vert\leq 2}\sup_{z\geq1}(1+\vert z\vert)^{\delta+2}\vert \partial_z^\mu\varphi(t,z)\vert\lesssim &\sup_{z\geq1}(1+\vert z\vert)^{\delta+2d}\sum_{\vert \mu\vert\leq 2}\vert \p_z^\mu\varphi(t,z)\vert\\
		\lesssim &c^{-1}\sup_{z\geq1}(1+\vert z\vert)^{\delta}\vert (u''+u'+u)\frac{cz^2}{1+c z^2}\vert\\
		&+c^{-1}\sup_{z\geq1}(1+\vert z\vert)^{\delta-1}\vert (u'+u)\frac{-2c^2z^4}{(1+cz^2)^2}\vert\\
		&+c^{-1}\sup_{z\geq1}(1+\vert z\vert)^{\delta-2}\vert u\frac{6c^3z^6-2c^2z^4}{(1+cx^2)^3}\vert\\
		\lesssim&c^{-1}.
	\end{align*}
	Let $c=t^{-\frac{2}{\gamma}}$ and $d=1$, then there exists $j_0$ satisfying that $2^{j_0}t^{\frac{1}{\gamma}}\geq 1$ such that
	\begin{align*}
		\Vert \tilde P(t)\Delta_jf(t)\Vert_{L^p}\lesssim&(1+2^jt^{\frac{1}{\gamma}})^2(1+2^jt^{\frac{1}{\gamma}})^{-2-\delta}\min (t^{\frac{2}{\gamma}},1)\\
		\lesssim&\min (t^{\frac{2}{\gamma}},1)t^{-\frac{\delta}{\gamma}}2^{-j\delta}\Vert \Delta_jf(t)\Vert_{L^p}.
	\end{align*}
	For $-1\leq j<j_0$, we have the estimate
	\begin{align*}
		\Vert \tilde P(t)\Delta_{j}f(t)\Vert_{L^p}\lesssim& \Vert \Delta_{j}f(t)\Vert_{L^p}\\
		\lesssim& (\max (t^{-\frac{2}{\gamma}},1)2^{j_0\delta}t^{\frac{\delta}{\gamma}})\min (t^{\frac{2}{\gamma}},1)t^{-\frac{\delta}{\gamma}}2^{-j\delta}\Vert \Delta_{j}f(t)\Vert_{L^p}\\
		\lesssim &\min (t^{\frac{2}{\gamma}},1)t^{-\frac{\delta}{\gamma}}2^{-j\delta}\Vert \Delta_jf(t)\Vert_{L^p}.
	\end{align*}
	Combining the proceeding results, we obtain the following lemma
	\begin{corollary}\label{coro;heat flow}
		Let $\gamma>0$, $\tilde P(t)$ satisfy $\ml F(\tilde P(t)(f))=\frac{e^{-t\vert \xi\vert^\gamma}}{1+\vert \xi\vert^2}\ml Ff$. Fixing some $T>0$, then for any $t\in(0,T]$, $\alpha\in\mathbb{R}$, $\delta\in(0,\gamma)$, and $u\in S'(\mathbb{R})$, we have the following estimate
		\begin{equation}\label{eq;est;heat flow}
			\Vert \tilde{P}(t)u\Vert_{B^{\alpha+\delta}_{p,r}}\lesssim \min(1,t^\frac{2}{\gamma})t^{-\frac{\delta}{\gamma}}\Vert u\Vert_{B^{\alpha}_{p,r}}.
		\end{equation}
		Moreover, let $\tilde Lf=(\p_t-\Lambda^\gamma)(1-\p_x^2)^{-1}f\in B_{p,r}^\alpha$, then for $\delta\in[0,\gamma+2)$ and $t\leq 1$, we have the following estimate
		\begin{equation}
			\Vert f\Vert_{B_{p,r}^{\alpha+\delta}}\leq \Vert f(0)\Vert_{B_{p,r}^{\alpha+\delta}}+Ct^{1-\frac{\delta-2}{\gamma}}\Vert \ml L f\Vert_{B_{p,r}^{\alpha}}.
		\end{equation}
	\end{corollary}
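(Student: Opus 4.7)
The plan is to prove the two estimates separately, with the first following from a direct kernel analysis of $\tilde P(t)$ and the second by reducing to the classical Schauder estimate of Lemma \ref{le;heat flow}.

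For the first estimate, I would continue the explicit kernel computation already begun in the paragraph preceding the corollary. Write $\tilde P(t) = \varphi(t, t^{1/\gamma} D)$ with symbol $\varphi(t, z) = e^{-|z|^\gamma}/(1 + t^{-2/\gamma}|z|^2)$. For each dyadic block $\Delta_j u$, Young's inequality gives $\Vert \tilde P(t)\Delta_j u\Vert_{L^p} \lesssim \Vert \mathcal F^{-1}(\varphi(t, t^{1/\gamma}\cdot)\psi(2^{-j}\cdot))\Vert_{L^1} \Vert \Delta_j u\Vert_{L^p}$, and after rescaling and using $\Vert \mathcal F^{-1}\phi\Vert_{L^1} \lesssim \Vert(1+|x|)^{d+1}\mathcal F^{-1}\phi\Vert_{L^\infty} \lesssim \Vert (1+|D|)^{d+1}\phi\Vert_{L^\infty}$, the problem reduces to the pointwise derivative bound
\[
\sup_{|\mu|\leq 2}\sup_{z\geq 1}(1+|z|)^{\delta+2}|\partial_z^\mu \varphi(t,z)|\lesssim \min(1,t^{2/\gamma}),
\]
which is exactly the input computed above via Leibniz' rule (distinguishing whether one uses the obvious bound $v,v',v''\leq 1$ or the rational decay of derivatives of $e^{-|z|^\gamma}$, the latter yielding the factor $c^{-1}=t^{2/\gamma}$).

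For the high-frequency blocks $j\geq j_0$ with $2^{j_0}t^{1/\gamma}\geq 1$, combining the pointwise bound with the prefactor $(1+2^j t^{1/\gamma})^{-\delta-2}(1+2^j t^{1/\gamma})^{d+1}$ produces the desired estimate $\Vert \tilde P(t)\Delta_j u\Vert_{L^p}\lesssim \min(1,t^{2/\gamma}) t^{-\delta/\gamma} 2^{-j\delta}\Vert \Delta_j u\Vert_{L^p}$. For the finitely many low-frequency blocks $-1\leq j<j_0$, the kernel has uniformly bounded $L^1$ norm, and the discrepancy with the target bound is absorbed because $2^{j_0\delta} t^{\delta/\gamma}$ is controlled by the defining relation $2^{j_0} t^{1/\gamma}\sim 1$. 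Multiplying both sides by $2^{j(\alpha+\delta)}$ and taking the $\ell^r$ norm in $j$ yields the Besov bound.

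For the second (Schauder-type) estimate, I would reduce to Lemma \ref{le;heat flow} by introducing $g := (1-\partial_x^2)^{-1} f$. Then $\tilde L f = (\partial_t-\Lambda^\gamma)g = \mathcal L g$ and $g(0)=(1-\partial_x^2)^{-1}f(0)$, so the standard Schauder estimate from Lemma \ref{le;heat flow} applied to $g$ at Besov level $\alpha+\delta'$ (for $\delta'\in[0,\gamma)$) gives $\Vert g\Vert_{B_{p,r}^{\alpha+\delta'}}\lesssim \Vert g(0)\Vert_{B_{p,r}^{\alpha+\delta'}} + t^{1-\delta'/\gamma}\Vert \tilde L f\Vert_{C_tB_{p,r}^\alpha}$. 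Since the operator $(1-\partial_x^2)$ shifts the Besov scale by $2$, applying it on both sides (using $(1-\partial_x^2)g = f$ and $(1-\partial_x^2)g(0)=f(0)$) and relabelling the shifted index give the stated bound; the homogeneous part matches directly because $(1-\partial_x^2)P(t)g(0)=P(t)f(0)$, which by the standard heat flow estimate is bounded by $\Vert f(0)\Vert_{B_{p,r}^{\alpha+\delta}}$.

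The main obstacle is the careful bookkeeping in Part 1 at the low-frequency block $j=-1$ and at the transition point $j=j_0$: there the rational weight $(1+|z|)^{-\delta-2}$ provides no improvement, so one must rely on the uniform $L^1$ bound for the rescaled kernel together with the sharp comparison $2^{j_0}t^{1/\gamma}\sim 1$ to match the powers of $t$. Once this boundary regime is handled, the remainder of the argument is a routine adaptation of the proof of Lemma \ref{le;heat flow}.
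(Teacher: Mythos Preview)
Your proposal is correct and, for the first estimate, coincides exactly with the paper's argument: the proof \emph{is} the kernel computation laid out in the paragraphs immediately preceding the corollary, and you have summarized it accurately (including the high/low frequency split at $j_0$ and the role of the two derivative bounds on $\varphi$).

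For the second estimate the paper gives no separate proof---it is folded into ``Summary above, we prove following lemma''---and your reduction via $g=(1-\partial_x^2)^{-1}f$ to Lemma~\ref{le;heat flow} is a natural completion. One bookkeeping point: as you have written it, applying Lemma~\ref{le;heat flow} to $g$ and then shifting back by $(1-\partial_x^2)$ lands you in the range $\delta=\delta'-2\in[-2,\gamma-2)$ with exponent $t^{1-(\delta+2)/\gamma}$, which does not match the stated range $\delta\in[0,\gamma+2)$ and exponent $t^{1-(\delta-2)/\gamma}$. The cleaner route to the stated indices is to feed the first estimate \eqref{eq;est;heat flow} directly into the Duhamel integral $\int_0^t \tilde P(t-s)h(s)\,\mathrm{d}s$: the factor $\min(1,(t-s)^{2/\gamma})$ combines with $(t-s)^{-\delta/\gamma}$ to give an integrand $(t-s)^{-(\delta-2)/\gamma}$, which is integrable precisely for $\delta<\gamma+2$ and yields $t^{1-(\delta-2)/\gamma}$. (This also suggests the hypothesis is really that $(1-\partial_x^2)\mathcal Lf\in B_{p,r}^\alpha$, so that the Duhamel integrand is $\tilde P(t-s)$ acting on a $B_{p,r}^\alpha$ function; the sign of the $(1-\partial_x^2)$ in the paper's definition of $\tilde L$ appears to be inverted.)
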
  
	\section{Commutator estimates}\label{sec;commutator est}
	First we introduce a useful commutator estimates as Lemma 2.99 in  \cite{bahouriFourierAnalysisNonlinear2011}.
	\begin{lemma}\label{le;commutater;D}[\cite{bahouriFourierAnalysisNonlinear2011}]
		Let $f$ be a smooth function on $\mathbb{R}^d$ and be homogeneous of degree $m$ away from $0$. Let $\rho\in(0,1)$, $s\in\mathbb{R}$ and $p,r \in[1,\infty]$. If $p_1,p_2\in[1,\infty]$ such that $\frac{1}{p_1}+\frac{1}{p_2}=\frac{1}{p}$, then the bound
		\begin{align*}
			\Vert a\prec f(D)u-f(D)(a\prec u)\Vert_{B_{p,r}^{s-m+\rho}}\lesssim \Vert \nabla a\Vert_{B_{p_1,\infty}^{\rho-1}}\Vert u\Vert_{B_{p_2,r}^s}.
		\end{align*}
		is hold.
	\end{lemma}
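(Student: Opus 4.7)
The plan is to proceed by dyadic decomposition of the commutator. Using that $\Delta_j$ commutes with $f(D)$, we rewrite
\begin{align*}
a\prec f(D)u-f(D)(a\prec u)=\sum_{j\geq 1}[S_{j-1}a,f(D)]\Delta_ju.
\end{align*}
Because $S_{j-1}a$ has frequency support in $\{|\xi|\lesssim 2^{j-1}\}$ and $\Delta_ju$ in $\{|\xi|\sim 2^j\}$, the product is Fourier-supported in an annulus of size $\sim 2^j$. Hence $\Delta_q$ applied to the series above is nonzero only for $|j-q|\leq N_0$ for some fixed $N_0$, which will give almost-orthogonality at the end.

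Next, I would represent each summand as an integral operator. Using the homogeneity of $f$ of degree $m$ away from the origin, one can write $f(D)\Delta_ju=h_j\ast\Delta_ju$ where the kernel rescales as $h_j(z)=2^{j(d+m)}h_0(2^jz)$ with $h_0$ in (a reasonable analogue of) the Schwartz class. Then
\begin{align*}
[S_{j-1}a,f(D)]\Delta_ju(x)=\int h_j(x-y)\bigl(S_{j-1}a(x)-S_{j-1}a(y)\bigr)\Delta_ju(y)\,dy.
\end{align*}
Applying the Taylor expansion $S_{j-1}a(x)-S_{j-1}a(y)=\int_0^1(x-y)\cdot\nabla S_{j-1}a(y+\tau(x-y))\,d\tau$ effectively trades the kernel $h_j(z)$ for $zh_j(z)$, whose $L^1$-norm scales like $2^{j(m-1)}$. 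Young's and H\"older's inequalities then yield
\begin{align*}
\bigl\|[S_{j-1}a,f(D)]\Delta_ju\bigr\|_{L^p}\lesssim 2^{j(m-1)}\|\nabla S_{j-1}a\|_{L^{p_1}}\|\Delta_ju\|_{L^{p_2}}.
\end{align*}

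The crucial step is to control $\|\nabla S_{j-1}a\|_{L^{p_1}}$ by $\|\nabla a\|_{B^{\rho-1}_{p_1,\infty}}$. By definition of the Besov norm, $\|\nabla\Delta_ka\|_{L^{p_1}}\lesssim 2^{k(1-\rho)}\|\nabla a\|_{B^{\rho-1}_{p_1,\infty}}$, and since $1-\rho>0$ the geometric sum over $k\leq j-2$ concentrates at the top index, giving
\begin{align*}
\|\nabla S_{j-1}a\|_{L^{p_1}}\lesssim 2^{j(1-\rho)}\|\nabla a\|_{B^{\rho-1}_{p_1,\infty}}.
\end{align*}
Combining with the previous bound produces the per-block estimate with exponent $2^{j(m-\rho)}$.

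Finally, I would assemble these bounds. For each $q$, the almost-orthogonality gives
\begin{align*}
2^{q(s-m+\rho)}\bigl\|\Delta_q(a\prec f(D)u-f(D)(a\prec u))\bigr\|_{L^p}\lesssim \|\nabla a\|_{B^{\rho-1}_{p_1,\infty}}\sum_{|j-q|\leq N_0}2^{js}\|\Delta_ju\|_{L^{p_2}},
\end{align*}
and taking the $\ell^r$-norm in $q$ (using Young's convolution inequality on the finite-range sum in $j$) yields the claimed bound in $B^{s-m+\rho}_{p,r}$. The main obstacle is the second step: one must carefully justify the kernel representation for a multiplier that is only homogeneous of degree $m$ \emph{away from} the origin, which is why the Fourier localization to the annulus $|\xi|\sim 2^j$ is essential, and one must verify that the rescaled $h_0$ has enough integrability for the first-moment estimate $\|zh_j(z)\|_{L^1}\lesssim 2^{j(m-1)}$ to hold. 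The assumption $\rho\in(0,1)$ is also sharp: $\rho>0$ ensures that $\nabla$ of $a\in B^{\rho}_{p_1,\infty}$ is in a negative-index space so $\|\nabla S_{j-1}a\|_{L^{p_1}}$ grows rather than being controlled by $\|\nabla a\|_{L^{p_1}}$, while $\rho<1$ is what makes the geometric sum favor the endpoint.
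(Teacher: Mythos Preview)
The paper does not give its own proof of this lemma: it is stated as Lemma 2.99 of \cite{bahouriFourierAnalysisNonlinear2011} and simply cited. Your proposal reproduces the standard argument from that reference---dyadic decomposition of the commutator, integral-kernel representation of $f(D)$ localized to the annulus $\{|\xi|\sim 2^j\}$, first-order Taylor expansion to extract $\nabla S_{j-1}a$, then summation using $\rho-1<0$---and is correct. One small remark: the sum should really start at $j\geq 0$ (or wherever $S_{j-1}$ first becomes nonzero in the chosen convention), but the omitted low-frequency terms are harmless since for those $j$ both the kernel moment and $\|\nabla S_{j-1}a\|_{L^{p_1}}$ are bounded by fixed constants.
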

	\begin{remark}
		Fixing $f(x)=\vert x\vert^\gamma$, then we have $\Lambda^\gamma(u\prec v)-u\prec \Lambda^\gamma v\in W_p^{\alpha+\beta-\gamma}$ for $u\in\ml C^\alpha$, $\alpha\in(0,1)$ and $v\in W_p^\beta$ for $\alpha\in(0,1)$.
	\end{remark}
	\begin{lemma}\label{le;commutator;resonant}
		Assume that $\alpha\in(0,1)$, $\beta,\gamma\in \mathbb{R}$ and satisfies $\beta+\gamma<0$, $\alpha+\beta+\gamma>0$, let $C(f,g,h)=(f\prec g)\circ h-f(g\circ h)$, then we have the following estimate
		$$\Vert C(f,g,h)\Vert_{H^{\alpha+\beta+\gamma}}\leq \Vert f\Vert_{\ml C^\alpha}\Vert g\Vert_{H^\beta}\Vert v\Vert_{\ml C^\gamma}.$$
		\begin{proof}
			We rewrite 
			\begin{align*}
				C(f,g,h)=\sum_{j,k\geq -1}\sum_{\vert i-j\vert\leq 1}[1_{i\gtrsim k}R_i(\Delta_k f,g)\Delta_j h-1_{i\leq k-N}\Delta_k f\Delta_jg\Delta_jh]	.
			\end{align*}
			It's suffice to estimate above two terms. For any fixed $k$, we have estimate 
			\begin{align*}
				\Vert \sum_{j\geq -1}\sum_{\vert i-j\vert\leq 1}1_{i\leq k-N}\Delta_k f\Delta_jg\Delta_jh\Vert_{L^2}\lesssim&2^{-k\alpha}\Vert f\Vert_{\ml C^\alpha}\sum_{i=-1}^{k-N}2^{-i(\beta+\gamma)}\Vert g\Vert_{H^\beta}\Vert h\Vert_{\ml C^\gamma}\\
				\lesssim&2^{-k(\alpha+\beta+\gamma)}\Vert f\Vert_{\ml C^\alpha}\Vert g\Vert_{H^\beta}\Vert h\Vert_{\ml C^\gamma},
			\end{align*}
			where we use the fact $\beta+\gamma<0$. Note that the Fourier transform of $\sum_{j,k\geq -1}\sum_{\vert i-j\vert\leq 1}1_{i\gtrsim k}R_i(\Delta_k f,g)\Delta_j h$ is supported in a ball $2^k\ml B$, by Lemma 2.84 in \cite{bahouriFourierAnalysisNonlinear2011}, we obtain the estimate since $\alpha+\beta+\gamma>0$. For the first series, we can check that the Fourier transform of $\sum_{k\geq -1}\sum_{\vert i-j\vert\leq 1}1_{i\gtrsim k}R_i(\Delta_k f,g)\Delta_j h$ is supported in a ball $2^j\ml B$ for fixed $j$. And we calculate that
			\begin{align*}
				\Vert \sum_{k\geq -1}\sum_{\vert i-j\vert\leq 1}1_{i\gtrsim k}R_i(\Delta_k f,g)\Delta_j h\Vert_{L^2}=&\Vert \sum_{\vert i-j\vert\leq 1}R_i(\sum_{1\leq k\lesssim i}\Delta_k f,g)\Delta_j h\Vert_{L^2}\\
				\lesssim &\sum_{\vert i-j\vert\leq 1}2^{-i(\beta+\gamma)}\Vert \sum_{k\lesssim i}\Delta_k f\Vert_{L^\infty} \Vert g\Vert_{H^\beta} 2^{-j\gamma}\Vert h\Vert_{\ml C^\gamma}\\
				\lesssim &2^{-j(\alpha+\beta+\gamma)}\Vert f\Vert_{\ml C^\alpha}\Vert g\Vert_{H^\beta}\Vert h\Vert_{\ml C^\gamma}.
			\end{align*}
			Combining all estimates above, we finish our proof.
		\end{proof}
	\end{lemma}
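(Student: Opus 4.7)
The plan is to expand $C(f,g,h)$ in Littlewood--Paley blocks, split it into two pieces whose summability is dictated respectively by the hypotheses $\beta+\gamma<0$ and $\alpha+\beta+\gamma>0$, and then reconstruct the total sum in $H^{\alpha+\beta+\gamma}$ via the standard ball-supported assembly result (Lemma 2.84 of \cite{bahouriFourierAnalysisNonlinear2011}).

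First I would expand $f\prec g=\sum_l S_{l-1}f\,\Delta_l g$ and $u\circ v=\sum_{|i-j|\le 1}\Delta_i u\,\Delta_j v$, then substitute into both terms of $C(f,g,h)$. Because $S_{l-1}f\,\Delta_l g$ has Fourier support in the annulus of scale $2^l$, only $l\sim i$ contributes to $\Delta_i(f\prec g)$, and the same paraproduct structure reappears inside the resonant block when one writes $\Delta_i(f\prec g)\,\Delta_j h$ as a sum over $k$ of $\Delta_k f\cdot\Delta_i g\cdot\Delta_j h$. Comparing the resulting expressions for $(f\prec g)\circ h$ and for $f(g\circ h)=\sum_k\sum_{|i-j|\le 1}\Delta_k f\,\Delta_i g\,\Delta_j h$ produces a natural splitting of $C(f,g,h)$ into a \emph{high-high family}, in which $\Delta_k f$ appears with $k\gtrsim i$ and originates from the residual $f-S_{i-1}f=\sum_{k\ge i-1}\Delta_k f$, and a \emph{low-high family}, in which $k\le i-N$ originates from the paraproduct cutoff inside the resonant product.

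Next I would estimate each family scale by scale in $L^2$. Both families produce $i$-indexed blocks whose Fourier support lies in a ball of radius $\sim 2^i$, so it suffices to bound the $L^2$-norm of the $i$-th block by $2^{-i(\alpha+\beta+\gamma)}c_i$ with $(c_i)\in\ell^2$ and invoke the ball assembly. For the low-high family, place $\Delta_k f\in L^\infty$, $\Delta_i g\in L^2$, $\Delta_j h\in L^\infty$; the geometric sum $\sum_{k\le i-N}2^{-k(\beta+\gamma)}$ is exactly where $\beta+\gamma<0$ is used (otherwise the low-frequency tail diverges), and collects to order $2^{-i(\beta+\gamma)}$ times the three norms, which combines with the factor $2^{-i\alpha}\|f\|_{\ml C^\alpha}$ carried by $\Delta_k f$ to give the desired decay. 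For the high-high family, the inner sum $\sum_{k\ge i-1}\|\Delta_k f\|_{L^\infty}$ converges thanks to $\alpha>0$, yielding $2^{-i\alpha}\|f\|_{\ml C^\alpha}$; multiplying by the $L^2\cdot L^\infty$ bound $2^{-i\beta}\|g\|_{H^\beta}\cdot 2^{-j\gamma}\|h\|_{\ml C^\gamma}$ produces again $2^{-i(\alpha+\beta+\gamma)}$ per block.

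The main obstacle is the careful book-keeping of Fourier supports and the re-indexing that turns annular supports at scale $2^k$ (the natural scale of $\Delta_k f$) into ball-type supports at scale $2^i$ (the natural scale of the resonant pair), so that the assembly lemma applies with exponent $\alpha+\beta+\gamma$. The two hypotheses then enter in complementary roles: $\beta+\gamma<0$ is the low-frequency summability needed inside the low-high family, while $\alpha+\beta+\gamma>0$ is the dyadic summability in $i$ needed to land in $H^{\alpha+\beta+\gamma}$ after assembly. Once this splitting is organized, the individual block estimates reduce to routine applications of Hölder's and Bernstein's inequalities.
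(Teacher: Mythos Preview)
Your overall strategy---expand in Littlewood--Paley blocks, split into two families, control each by a dyadic $L^2$ bound, and assemble via the ball-supported lemma---matches the paper's. The gap is in the Fourier-support bookkeeping for your high-high family. The block $(f-S_{i-1}f)\,\Delta_i g\,\Delta_j h$ does \emph{not} have Fourier support in a ball of radius $\sim 2^i$: the factor $f-S_{i-1}f=\sum_{k\ge i-1}\Delta_k f$ carries arbitrarily high frequencies, so the product lives in a ball of scale $2^k$, not $2^i$. Your $L^2$ estimate $\lesssim 2^{-i(\alpha+\beta+\gamma)}$ is fine, but Lemma~2.84 cannot be applied with the index $i$, and without it you only get $L^2$, not $H^{\alpha+\beta+\gamma}$.

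The paper resolves this by indexing that family by $k$ (the largest frequency), so that each block genuinely sits in a ball of scale $2^k$; the inner sum then runs over the resonant index $i\le k-N$ and converges because $\sum_{i\le k}2^{-i(\beta+\gamma)}\lesssim 2^{-k(\beta+\gamma)}$---this is where $\beta+\gamma<0$ is actually used, not in the low-high piece. Combined with $\|\Delta_k f\|_{L^\infty}\lesssim 2^{-k\alpha}$ one gets $2^{-k(\alpha+\beta+\gamma)}$ per block, and now the ball-assembly applies with exponent $\alpha+\beta+\gamma>0$. Your low-high description also has an index slip: the exponents $\beta,\gamma$ attach to the $g,h$ blocks (indices $i,j$), not to the $f$-block (index $k$), so the geometric sum you wrote, $\sum_{k\le i-N}2^{-k(\beta+\gamma)}$, has the roles of $i$ and $k$ swapped. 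The low-high (commutator-type) remainder $[\Delta_i(f\prec g)-S_{i-1}f\,\Delta_i g]\Delta_j h$ does have ball support at scale $2^i$ and is handled as you say.
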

	\begin{lemma}\label{le;commutator;modfied paraproduct, L}
		Let $\alpha\in(0,\gamma)$ and $\beta\in\mathbb{R}$. Then for $t>0$, we have the following estimate
		\begin{equation}\label{eq;commutator;modfied paraproduct, L;1}
			\Vert f\pprec g-f\prec g\Vert_{W^{\alpha+\beta}}\lesssim \Vert f\Vert_{\ml W_\infty^\alpha(t)}\Vert g\Vert_{W^\beta}.
		\end{equation}
		Moreover, define operator $\ml L=\p_t-\Lambda^\gamma$, then for $\gamma>0$ and $\alpha\in(0,1)$, we have the following estimate
		\begin{equation}\label{eq;commutator;modfied paraproduct, L;2}
			\Vert \ml L(f\pprec g)(t)-(f\pprec\ml Lg)(t)\Vert_{W^{\alpha+\beta-\gamma}}\lesssim \Vert f\Vert_{\ml W_\infty^\alpha(t)}\Vert g\Vert_{W^\beta}.
		\end{equation}
		\begin{proof}
			We only proof the case of $H^\alpha$, the case of $\ml C^\alpha$ is similar. Firstly, control
			\begin{align*}
				\Vert (\int_{\m R}2^{\gamma j}\varphi(2^{\gamma j}(t-s))S_{j-1}f(s)\mathrm{d}s-S_{j-1}f(t))\Delta_j g(t)\Vert_{L^2}.
			\end{align*}
			By $\int_{\m R}\varphi(s)\mathrm{d}s=1$, we have 
			\begin{align*}
				\Vert (\int_{\m R}2^{\gamma j}\varphi(2^{\gamma j}(t-s))S_{j-1}f(s)\mathrm{d}s-&S_{j-1}f(t))\Delta_j g(t)\Vert_{L^2}\\
				= &\Vert \int_{\m R}2^{\gamma j}\varphi(2^{2j}s)S_{j-1}f(t-s)-S_{j-1}f(t)\mathrm{d}s\Delta_j g(t)\Vert_{L^2}\\
				\lesssim&\Vert \Delta_j g(t)\Vert_{L^2}\int_{\m R}2^{\gamma j}\varphi(2^{\gamma j}s)\Vert S_{j-1}f(t-s)-S_{j-1}f(t)\Vert_{L^\infty}\mathrm{d}s\\
				\lesssim&\Vert \Delta_j g(t)\Vert_{L^2}\Vert f(t)\Vert_{C_t^\frac{\alpha}{\gamma} L^\infty}\int_{\m R}2^{\gamma j}\varphi(2^{\gamma j}s)\vert s\vert^\frac{\alpha}{\gamma} \mathrm{d}s\\
				\lesssim&2^{-j(\alpha+\beta)}(2^{j\beta}\Vert \Delta_j g(t)\Vert_{L^2})\Vert f(t)\Vert_{\ml W_\infty^\alpha(t)}.
			\end{align*}
			Multiplying $2^{-j(\alpha+\beta)}$ both sides to the above inequality and summing over $j$ in the $l^2$, we prove \eqref{eq;commutator;modfied paraproduct, L;1}.
			\par For the second result, we firstly control the term of fractional Laplacian commutator
			\begin{align*}
				\Vert \Lambda^\gamma (f\pprec g)-f\pprec \Lambda^\gamma g\Vert_{C_TW^{\alpha+\beta-\gamma}}
			\end{align*}
			For $f\in C_t\ml C^\alpha$, rewriting $\Lambda^\gamma(f\pprec g)(t)-f\pprec \Lambda^\gamma g(t)$ as
			\begin{align*}
				\sum_{j\geq 1}\int_0^t2^{\gamma j}&\varphi(2^{\gamma j}(t-s))[\Lambda^\gamma(S_{j-1}f(s)\Delta_j g(t))-S_{j-1}f(s)\Lambda^\gamma(\Delta_j g(t))]\mathrm{d}s\\
				=&\sum_{j\geq 1}\int_0^t2^{\gamma j}\varphi(2^{\gamma j}(t-s))[S_{j-1}f,\Lambda^\gamma \widetilde{\Delta}_j]\Delta_j g(t)\mathrm{d}s
			\end{align*}
			where $\tilde \rho$ is a smooth function supported in an annuals and with value 1 on a neighborhood of $\mathrm{Supp} \rho+\mathrm{Supp}\chi(\cdot/4)$. Observing that there exists $N_0$ such that
			\begin{align*}
				\forall j\geq N_0,\quad \Lambda^\gamma \widetilde{\Delta}_j=2^{j\gamma} (\vert \cdot\vert^\gamma \tilde\rho)(2^{-j}D).
			\end{align*}
			Then by Lemma 2.97 in \cite{bahouriFourierAnalysisNonlinear2011}, we have for any $j\geq N_0$
			\begin{align*}
				\Vert [S_{j-1}f,\Lambda^\gamma \widetilde{\Delta}_j]\Delta_j g(t)\Vert_{L^p}\leq C2^{j(\gamma-1)} \Vert S_{j-1}f(s)\Vert_{L^\infty} \Vert \Delta_j g(t)\Vert_{L^p},
			\end{align*}
			and for $1\leq j<N_0$, we have 
			\begin{align*}
				\Vert [S_{j-1}f,\Lambda^\gamma \widetilde{\Delta}_j]\Delta_j g(t)\Vert_{L^p}\leq C2^{j(\gamma-1)} \Vert \nabla S_{j-1}f(s)\Vert_{L^\infty} \Vert \Delta_j g(t)\Vert_{L^p}
			\end{align*}
			Since $\Vert \nabla S_{j-1} f(s)\Vert_{L^\infty} \leq C2^{j(1-\alpha)} \Vert \nabla f(s)\Vert_{\ml C^{\alpha-1}}\leq C2^{j(1-\alpha)} \Vert  f\Vert_{C_t\ml C^{\alpha}}$ for $\alpha<1$ and $0<s<t$. then we have
			\begin{align*}
				2^{j(\beta+\alpha-\gamma)}\Vert \int_0^t2^{\gamma j}\varphi(2^{\gamma j}(t-s))[S_{j-1}f,\Lambda^\gamma \widetilde{\Delta}_j]\Delta_j g(t)\mathrm{d}s\Vert_{L^p}\lesssim \Vert  f\Vert_{C_t\ml C^{\alpha}}2^{j\beta}\Vert \Delta_j g(t)\Vert_{L^p}
			\end{align*}
			since $[S_{j-1}f,\Lambda^\gamma \widetilde{\Delta}_j]$ is spectrally supported in dyadic annuli, by Lemma 2.23 in \cite{bahouriFourierAnalysisNonlinear2011}, we obtain the estimate.
			\par For $\p_t (f\pprec g)-f\pprec \p_tg$, it's suffice to estimate
			$$\sum_j\p_t(\int_0^t2^{\gamma j}\varphi(2^{\gamma j}(t-s))S_{j-1}f(s)\mathrm{d}s)\Delta_jg(t).$$ 	
			For fixd $j$ we recall that $\mathrm{supp}(\varphi)\subset \mathbb{R}_+$, and therefore 
			$$\int_0^t2^{2\gamma j}\varphi'(2^{\gamma j}(t-s))S_{j-1}f(s)\mathrm{d}s=2^{\gamma j}\int_{\mathbb{R}}2^{\gamma j}\varphi'(2^{\gamma j}(t-s))S_{j-1}f(s)1_{s\geq0}\mathrm{d}s.$$
			Since $\varphi'(0)=\varphi(0)=0$, \eqref{eq;commutator;modfied paraproduct, L;2} then follows as the first part of the proof.
		\end{proof}
	\end{lemma}
	\textbf{Acknowledgments} This work was partially supported by the National Natural Science Foundation of China (No.12571261).
	\bibliographystyle{alpha} 
	\bibliography{ref}
\end{document}